\documentclass{siamart190516}



\usepackage{lipsum}
\usepackage{amsfonts}
\usepackage{graphicx}
\usepackage{epstopdf}
\usepackage{algorithmic}
\ifpdf
  \DeclareGraphicsExtensions{.eps,.pdf,.png,.jpg}
\else
  \DeclareGraphicsExtensions{.eps}
\fi


\newtheorem{Example}{Example}

\newenvironment{customthm}[1]
  {\innercustomthm}
  {\endinnercustomthm}

\newsiamremark{remark}{Remark}
\newsiamremark{hypothesis}{Hypothesis}
\crefname{hypothesis}{Hypothesis}{Hypotheses}
\newsiamthm{claim}{Claim}

\headers{Lipschitz stability for Backward Heat Equation}{P. Arratia, M. Courdurier, E. Cueva, A. Osses and B. Palacios}

\title{Lipschitz stability for Backward Heat Equation with application to Fluorescence Microscopy
\thanks{
This work was funded by ANID-Fondecyt grants \#1191903 and \#1201311, Basal Program CMM-AFB 170001, FONDAP/15110009.
}
}

\author{Pablo Arratia\thanks{Departamento de Ingeniería Matemática, FCFM, Universidad de Chile, Chile
  (\email{parratia@dim.uchile.cl}).}
  \and Matías Courdurier\thanks{Facultad de Ciencias Matemáticas, Pontificia Universidad Católica de Chile, Chile
  (\email{mcourdurier@mat.uc.cl}).}
\and Evelyn Cueva\thanks{Research Center on Mathematical Modeling (MODEMAT), Escuela Polit\'ecnica Nacional, Quito, Ecuador
  (\email{ecueva@dim.uchile.cl}).}
\and Axel Osses\thanks{Departamento de Ingeniería Matemática and Centro de Modelamiento Matemático, UMI CNRS 2807, FCFM, Universidad de Chile, Chile
  (\email{axosses@dim.uchile.cl}).}
\and Benjamín Palacios\thanks{Department of Statistics, University of Chicago, US (\email{bpalacios@uchicago.edu}).}
}

\usepackage{amsopn}

\usepackage{amsmath}
\usepackage{amssymb}
\usepackage{bbm}
\usepackage{bm}
\usepackage{verbatim}
\usepackage{enumerate}

\DeclareMathOperator\supp{supp}

\newcommand{\R}{\mathbb{R}}
\newcommand{\RR}{\mathbb{R}}

\newcommand{\bb}{\boldsymbol{b}}
\newcommand{\bfmu}{\boldsymbol{\mu}}

\usepackage{accents}
\DeclareRobustCommand{\ubar}[1]{\underaccent{\bar}{#1}}
\newcommand{\dint}{\displaystyle\int}

\usepackage{placeins}
\makeatletter
\AtBeginDocument{%
  \expandafter\renewcommand\expandafter\subsection\expandafter
    {\expandafter\@fb@secFB\subsection}%
  \newcommand\@fb@secFB{\FloatBarrier
    \gdef\@fb@afterHHook{\@fb@topbarrier \gdef\@fb@afterHHook{}}}%
  \g@addto@macro\@afterheading{\@fb@afterHHook}%
  \gdef\@fb@afterHHook{}%
}
\makeatother

\newlength{\tempdima}
\newcommand{\rowname}[1]
{\rotatebox{90}{\makebox[\tempdima][c]{\textbf{#1}}}}

\makeatletter
\g@addto@macro\bfseries{\boldmath}
\makeatother



\ifpdf
\hypersetup{
  pdftitle={Lipschitz stability for Backward Heat Equation with application to Fluorescence Microscopy},
  pdfauthor={P. Arratia, M. Courdurier, E. Cueva, A. Osses, B. Palacios}
}
\fi




\begin{document}

\maketitle

\begin{abstract}
This second version of the manuscript includes, in the appendices, an erratum that points out an error on the published version and offers alternative results for the Lipschitz stability analysis of the backward heat propagation problem and its applications to light sheet fluorescence microscopy.\\

{\em Abstract of the first version.} In this work we study a Lipschitz stability result in the reconstruction of a compactly supported initial temperature for the heat equation in $\R^n$, from measurements along a positive time interval and over an open set containing its support. We take advantage of the explicit dependency of solutions to the heat equation with respect to the initial condition. By means of Carleman estimates we obtain an analogous result for the case when the observation is made along an exterior region  $\omega\times(\tau,T)$, such that the unobserved part $\R^n\backslash\omega$ is bounded. In the latter setting, the method of Carleman estimates gives a general conditional logarithmic stability result when initial temperatures belong to a certain admissible set, and without the assumption of compactness of support. Furthermore, we apply these results to deduce a similar result for the heat equation in $\R$ for measurements available on a curve contained in $\R\times [0,\infty)$, from where a stability estimate for an inverse problem arising in 2D Fluorescence Microscopy is deduced as well. In order to further understand this Lipschitz stability, in particular, the magnitude of its stability constant with respect to the noise level of the measurements, a numerical reconstruction is presented based on the construction of a linear system 
for the inverse problem in 
Fluorescence Microscopy. We investigate the stability constant with the condition number of the corresponding matrix.\\

{\em Abstract of the erratum.} The purpose of the erratum (Appendix \ref{erratum}) is to point out an error in theorem 1.3 which results in the invalidity of its conclusion. In addition, we include an example showing that theorem 1.3 cannot hold and propose alternative (although weaker) results. More specifically, a Lipschitz stability in an $L^1$-norm is replaced by one involving an $H^{-2}$-norm. We also revise other results which were obtained from theorem 1.3, in particular, we modify the stability theorem for the inverse problem in Light Sheet Fluorescence Microscopy which is now a logarithmic one in the natural Lebesgue spaces or Lipschitz in the weaker $H^{-2}$-norm.

\end{abstract}

\begin{keywords}
  Backward Heat Equation, Lipschitz stability, Inverse Problem, Fluorescence Microscopy.
\end{keywords}

\begin{AMS}
  35B35, 35K05, 35R30. 
\end{AMS}

\section{Introduction}\label{sec:intro}

In this paper we consider the heat equation in $\R^n$:
\begin{equation}\label{eq:Heat Equation}
\left\{ \begin{array}{rcll} 
u_t-\Delta u & = & 0 & \text{in } \mathbb{R}^n\times(0,T),\\
u(y,0) & = & u_0(y) & \text{in } \mathbb{R}^n,\\
\displaystyle\lim_{|y|\to\infty}u(y,t) & = & 0 & t\in (0,T).   
\end{array}\right.
\end{equation}
We are interested in the reconstruction of the initial temperature $u_0$ when measurements are available in a certain open region. This problem is known as the backward heat equation inverse problem and is an ill-posed problem in the sense of Hadamard \cite{Hadamard_Morse_1953}, \emph{i.e.}, small noise on observations may cause large errors in the reconstruction of the initial condition. Ill-posedness may be overcome by incorporating a priori information about the solutions. A common hypothesis that frequently appears in the literature consists in assuming that the initial condition belongs to a bounded set of some Sobolev space \cite{Fu_Xiong_Qian_2007, Hao_Duc_2009, Yamamoto, vo2017local, Xu_Yamamoto_2000}. This approach is taken into account in order to deduce a conditional logarithmic stability when measurements are made on $\omega \times (\tau,T)$, for $0\leq \tau<T$ and $\omega$ an open set with bounded complement. Namely, given $\beta>0$ and $M>0$, we consider the following admissible set:

\begin{equation}\label{def:Admissible set}
\mathcal{A}_{\beta,M}:=\{a \in H^{2\beta}(\R^n): ||a||_{H^{2\beta}(\R^n)}\leq M\}.
\end{equation}

The conditional logarithmic stability is stated as follows: 

\begin{theorem}\label{th:Log-stability}
Let $u$ be a solution of \cref{eq:Heat Equation} with $u_0\in \mathcal{A}_{\beta,M}$. Let $\omega \times (\tau,T)$ be the observation region where $0\leq\tau<T$ and $\omega\subseteq \R^n$ is an open set such that $\R^n \backslash \omega$ is compact. Let us suppose that  $||u||_{L^2(\omega\times(\tau,T))}<1$. Then, there exist constants $\kappa \in (0,1)$ and $C_1 = C_1(M,\beta,\tau, T, \omega)>0$ such that
\[||u_0||_{L^2(\R^n)}\leq C_1(-\log||u||_{L^2(\omega\times(\tau,T))})^{-\kappa}.\]
\end{theorem}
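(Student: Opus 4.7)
The approach reduces the theorem to two distinct ill-posed steps: (i) a spatial quantitative unique continuation from $\omega$ into $\R^n\setminus\omega$ for the heat operator at a positive time $t_0\in(\tau,T)$, handled by a parabolic Carleman estimate on a bounded auxiliary domain; and (ii) the exponentially ill-posed backward-in-time reconstruction from $t_0$ to $t=0$, handled by a Fourier frequency-splitting argument exploiting the a priori bound $\|u_0\|_{H^{2\beta}}\leq M$. The geometry is favorable because $\R^n\setminus\omega$ is compact, so one may fix bounded smooth domains $\Omega\Subset\Omega'$ with $\R^n\setminus\omega\Subset\Omega$ and $\overline{\Omega'\setminus\Omega}\subset\omega$, so that an entire shell around $\partial\Omega$ lies in the observation region.

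For the Carleman step, fix $t_0\in(\tau,T)$ and pick a cutoff $\chi\in C_c^\infty(\Omega')$ with $\chi\equiv 1$ on $\Omega$. Then $v=\chi u$ satisfies $(\partial_t-\Delta)v=g$ with $\supp g\subset(\Omega'\setminus\Omega)\times(\tau,T)\subset\omega\times(\tau,T)$, where $g$ depends linearly on $u$ and $\nabla u$. Applying a global parabolic Carleman estimate for $\partial_t-\Delta$ on $\Omega'\times(\tau,T)$ to $v$, and then balancing the Carleman parameter in the Robbiano--Lebeau style, one expects a Hölder-type interpolation
\[
\|u(\cdot,t_0)\|_{L^2(\Omega)}\;\leq\; C\,\|u\|_{L^2(\omega\times(\tau,T))}^{\theta}\,\|u\|_{L^2(\Omega'\times(\tau,T))}^{1-\theta}
\]
for some $\theta\in(0,1)$. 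Controlling $\|u\|_{L^2(\Omega'\times(\tau,T))}\leq CM$ through the heat-semigroup contraction combined with the embedding $H^{2\beta}\hookrightarrow L^2$, and combining with the trivial bound $\|u(\cdot,t_0)\|_{L^2(\R^n\setminus\Omega)}\leq\|u(\cdot,t_0)\|_{L^2(\omega)}$ (suitably averaged in time), one would conclude
\[
\|u(\cdot,t_0)\|_{L^2(\R^n)}\;\leq\; C\,M^{1-\theta}\,\|u\|_{L^2(\omega\times(\tau,T))}^{\theta}.
\]

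For the backward step, the Fourier identity $\widehat{u}(\xi,t_0)=e^{-|\xi|^2 t_0}\widehat{u_0}(\xi)$ and Plancherel yield, for every $N>0$,
\[
\|u_0\|_{L^2(\R^n)}\;\leq\; e^{N^2 t_0}\,\|u(\cdot,t_0)\|_{L^2(\R^n)}\,+\, N^{-2\beta}\,\|u_0\|_{H^{2\beta}(\R^n)}.
\]
Plugging the previous estimate and $\|u_0\|_{H^{2\beta}}\leq M$, the right-hand side becomes $CM^{1-\theta}e^{N^2 t_0}\|u\|^{\theta}+CMN^{-2\beta}$. Choosing $N^2=\frac{\theta}{2t_0}\bigl(-\log\|u\|_{L^2(\omega\times(\tau,T))}\bigr)$ absorbs the first summand into $\|u\|^{\theta/2}$, which decays faster than any negative power of $-\log\|u\|$; the second (logarithmic) summand then dominates, yielding the claim with $\kappa=\min(\beta,1)\in(0,1)$.

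The main technical difficulty is the first step: turning a parabolic Carleman estimate into the Hölder interpolation above. Standard Carleman inequalities control $u$ in terms of observations plus terms involving $(\partial_t-\Delta)u$ and boundary data, and one must choose $\chi$ and the weight so that all unwanted contributions either vanish, lie in $\omega$, or can be absorbed by the a priori bound. A convexification/optimization of the Carleman parameter is needed to convert the exponentially weighted inequality into the polynomial Hölder interpolation. A convenient shortcut is to invoke a three-cylinder or propagation-of-smallness inequality for parabolic operators already available in the literature, at the price of losing the explicit dependence of $\theta$ on the geometry.
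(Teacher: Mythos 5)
Your overall strategy is sound and lands in the same place, but it is a genuinely different route from the paper's in both of its two halves, and the half that carries all the technical weight is asserted rather than proven. For the spatial step, the paper does use exactly the cutoff-plus-Carleman device you sketch (a cutoff $\rho$ localizing onto the compact set $\R^n\setminus\omega$, the Imanuvilov Carleman estimate on the bounded auxiliary domain, following Cabanillas), but it extracts a \emph{Lipschitz} observability inequality, $\|u\|_{L^2(\tau+\varepsilon,T-\varepsilon;H^1(\R^n))}\leq C\|u\|_{L^2(\omega\times(\tau,T))}$ (\cref{th:Energy}), not merely the H\"older interpolation you posit; this is possible precisely because the unobserved region is bounded, so no balancing of the Carleman parameter against an a priori bound is needed at this stage. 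The point you gloss over --- that the source $g$ in the equation for the cutoff solution contains $\nabla u$ on the observation shell, which is not directly controlled by $\|u\|_{L^2(\omega\times(\tau,T))}$ --- is handled in the paper by measuring $g$ in $L^2(\tau,T;H^{-1})$ and integrating by parts so that only $u$ itself appears on $\omega$; if you instead invoke a three-cylinder/propagation-of-smallness inequality from the literature, your weaker H\"older version is still enough for the final logarithmic rate, but as written this step is a citation placeholder, not an argument. For the backward step your Fourier frequency-splitting with the cutoff $N^2=\frac{\theta}{2t_0}(-\log\|u\|)$ is correct and arguably more elementary and more explicit (it gives $\kappa$ directly in terms of $\beta$; just note that $\min(\beta,1)$ must be replaced by any $\kappa<1$ when $\beta\geq 1$ to stay in the open interval) than the paper's route, which uses logarithmic convexity of $t\mapsto\|u(\cdot,t)\|^2_{L^2}$ integrated in time, a $W^{1,p}(0,\theta;L^2)$ a priori bound from the semigroup smoothing, and interpolation plus Sobolev embedding in the time variable, following Yamamoto. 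Two small points to tighten: the "trivial bound'' on $\|u(\cdot,t_0)\|_{L^2(\R^n\setminus\Omega)}$ requires selecting a good time slice $t_0$ by averaging, and that same $t_0$ must serve in the Carleman/interpolation estimate, so both should be arranged on a common compact subinterval of $(\tau,T)$; and the paper's intermediate H\"older estimate (\cref{th:Holder}) arises from interpolating between $H^{-1}$ and $H^{2\alpha}$ in the spatial scale, which plays the role that your frequency splitting plays, so the two proofs pay the "logarithmic price'' at different stages.
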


To conclude this result, we use a Carleman inequality obtained in \cite{Imanuvilov}. The main problem is that the inequality established in \cite{Imanuvilov} does not hold for unbounded domains such as $\R^n$. In order to be able to apply the Carleman estimate, we use some ideas taken from \cite{Cabanillas}, where null controllability for the heat equation is proved for a control region with bounded complement. Such a large region of control seems to be necessary, as shown in \cite{Lack1, Micu_Zuazua}. 

The main result of this paper is in the context of compactly supported initial conditions, where the above logarithmic inequality can be improved to a Lipschitz one. The precise result is stated in \cref{th:Lipschitz stability unbounded domain} below. It is in fact a consequence of an analogous result, \cref{th:Lipschitz stability}, for the closely related inverse problem of backward heat propagation with observation in an open region surrounding the support of the initial heat profile.

\begin{theorem}\label{th:Lipschitz stability unbounded domain}
For $R>0$ we define $B:=B(0,R)$ the ball of radius $R$ and centered at the origin. Let $0\leq\tau<T$ and $\omega\subseteq\R^n$ be such that $\R^n\backslash\omega$ is compact and $B\subseteq \R^n\backslash \omega$. Let $u_0\in L^1(\R^n)$ be with $\supp(u_0)\subseteq B$ and $u$ be the respective solution of \cref{eq:Heat Equation}. Then there exists a constant $C_2=C_2(R,\tau,T,\omega)>0$ such that

\[||u_0||_{L^1(\R^n)}\leq C_2||u||_{L^2(\omega\times(\tau,T))}.\]
\end{theorem}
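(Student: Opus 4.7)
The approach, which is signalled by the paragraph preceding the statement, is to reduce this result to the companion Theorem~\ref{th:Lipschitz stability}, which gives the same Lipschitz bound when the observation region is a bounded open set surrounding $\supp(u_0)$. The hypothesis here places $\omega$ in a neighborhood of infinity (since $\R^n\setminus\omega$ is compact), so the entire task is to carve out of $\omega$ a bounded surrounding region and then quote the surrounding version.

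For the geometric step, use that $\R^n\setminus\omega$ is compact and contains $B=B(0,R)$ to pick $R_1>R$ with $\R^n\setminus\omega\subset B(0,R_1)$, so that $\R^n\setminus\overline{B(0,R_1)}\subseteq\omega$. Choose any $R_2>R_1$ and define the open spherical shell
\[
\omega':=\{x\in\R^n:R_1<|x|<R_2\}\subseteq\omega.
\]
Then $\omega'$ is a bounded open set that completely encloses $\overline{B(0,R)}\supseteq\supp(u_0)$, so it qualifies as an admissible observation region for Theorem~\ref{th:Lipschitz stability}.

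Applying that theorem to the initial datum $u_0$ with observation set $\omega'\times(\tau,T)$ yields a constant $C'=C'(R,\tau,T,\omega')>0$ with
\[
\|u_0\|_{L^1(\R^n)}\leq C'\,\|u\|_{L^2(\omega'\times(\tau,T))}.
\]
Since $\omega'\subseteq\omega$, monotonicity of the $L^2$-norm gives $\|u\|_{L^2(\omega'\times(\tau,T))}\leq\|u\|_{L^2(\omega\times(\tau,T))}$, which yields the claimed estimate with $C_2:=C'$. Note that $R_1,R_2$ can be chosen once and for all as functions of $R$ and $\omega$, so the final constant depends only on $R$, $\tau$, $T$ and $\omega$.

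The reduction above is pure geometric bookkeeping; the real content sits in Theorem~\ref{th:Lipschitz stability}, and that is where the main obstacle lies. A direct Carleman-type argument would at best reproduce the logarithmic bound of Theorem~\ref{th:Log-stability}, so to upgrade to Lipschitz one must exploit the compact support of $u_0$ together with the explicit heat-kernel representation $u(x,t)=\int_{B}G(x-y,t)\,u_0(y)\,dy$, noting that on the bounded region $\omega'\times(\tau,T)$ the kernel $G$ is smooth with uniform positive upper and lower bounds in terms of the geometric parameters. The difficulty there is the quantitative inversion of the resulting integral operator $u_0\mapsto u|_{\omega'\times(\tau,T)}$ on $L^1$ functions supported in $B$, i.e., showing that this operator is bounded below — an estimate that would be false without the compactness of $\supp(u_0)$, which is precisely why the present theorem cannot be proved by the Carleman route alone.
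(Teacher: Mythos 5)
There is a genuine gap: you misapply \cref{th:Lipschitz stability}. That theorem is stated for the observation region $2B\times(t_1,t_2)$, where $2B$ is a ball \emph{containing} $\supp(u_0)$; its proof (via \cref{lem:u0 Positiva soporte compacto}) hinges on the identity $\|u_0\|_{L^1}=\int_{\R^n}u(y,t)\,dy$ and on the fact that the portion of this integral over $\{|y|>2R\}$ is at most $\alpha\|u_0\|_{L^1}$ with $\alpha<1$, so that the observed region $2B$ captures a definite fraction of the heat mass. Your set $\omega'=\{R_1<|x|<R_2\}$ with $R_1>R$ is an annulus \emph{disjoint} from $B\supseteq\supp(u_0)$: it encircles the support but does not contain it, so it is not an admissible observation region for \cref{th:Lipschitz stability}, and no monotonicity of the $L^2$-norm can repair this. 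Indeed, proving a Lipschitz bound from observations on a region whose complement contains the support is precisely the content of the theorem you are trying to prove, so the reduction is circular in spirit.

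The missing ingredient is \cref{th:Energy}, the Carleman-based observability estimate $\|u\|_{L^2(\R^n\times(\tau+\varepsilon,T-\varepsilon))}\leq C(\varepsilon)\|u\|_{L^2(\omega\times(\tau,T))}$, which transfers information from the exterior set $\omega$ into the unobserved compact hole. The paper's proof is exactly the two-step chain: first \cref{th:Lipschitz stability} with $t_1=\tau+\varepsilon$, $t_2=T-\varepsilon$ gives $\|u_0\|_{L^1(\R^n)}\leq C_3\|u\|_{L^2(2B\times(t_1,t_2))}\leq C_3\|u\|_{L^2(\R^n\times(t_1,t_2))}$, and then \cref{th:Energy} closes the gap. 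Your dismissal of the Carleman route rests on a misconception: the logarithmic loss in \cref{th:Log-stability} arises only in passing from positive times back to $t=0$ for general initial data in $\mathcal{A}_{\beta,M}$; the observability inequality of \cref{th:Energy} at positive times is already of Lipschitz type, and it is the compact support (through \cref{th:Lipschitz stability}) that allows the final return to $u_0$ without that loss.
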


\begin{theorem}\label{th:Lipschitz stability}
Let $B$ be as before. If $u_0\in L^1(\R^n)$ with $\supp(u_0)\subseteq B$, then there exists a constant $C_3 = C_3(R,t_1,t_2)>0$, for $0<t_1<t_2$, such that
\[||u_0||_{L^1(\R^n)}\leq C_3||u||_{L^2(2B\times(t_1,t_2))}.\]
\end{theorem}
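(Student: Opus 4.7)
My plan is to exploit the explicit heat-kernel representation
\begin{equation*}
u(y,t)\ =\ \int_{B} G_{t}(y-z)\,u_{0}(z)\,dz, \qquad G_{t}(x)\ =\ (4\pi t)^{-n/2}\,e^{-|x|^{2}/(4t)},
\end{equation*}
valid for $y\in\R^n$ and $t>0$ by the compact support of $u_0$, together with the favorable geometry that the observation ball $2B$ strictly contains the support ball $B$. The essential input is that on the compact set $\overline{2B}\times\overline{B}\times[t_1,t_2]$ one has $|y-z|\leq 3R$ and $t$ is bounded away from $0$ and $\infty$, so $G_t(y-z)$ is continuous and bounded below by a strictly positive constant $c_\ast=c_\ast(R,t_1,t_2)>0$.

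To illustrate why this geometry already forces a Lipschitz bound, I would first dispatch the case $u_0\geq 0$, which is the physically meaningful one for the fluorescence microscopy application pursued later in the paper. The pointwise lower bound on the kernel yields $u(y,t)\geq c_\ast\|u_0\|_{L^1(\R^n)}$ on all of $2B\times(t_1,t_2)$, and integrating the square gives
\begin{equation*}
\|u\|_{L^2(2B\times(t_1,t_2))}\ \geq\ c_\ast\,|2B|^{1/2}(t_2-t_1)^{1/2}\,\|u_0\|_{L^1(\R^n)},
\end{equation*}
which is the claimed inequality, with the explicit constant $C_3 = 1/\bigl(c_\ast |2B|^{1/2}(t_2-t_1)^{1/2}\bigr)$.

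For a general signed $u_0\in L^1(\R^n)$ supported in $B$, I would pass to duality. Writing $\|u_0\|_{L^1(\R^n)}=\sup_{\|\psi\|_{L^\infty(B)}\leq 1}\int_B u_0(z)\psi(z)\,dz$ and exchanging integrals via Fubini, for any test function $\phi\in L^2(2B\times(t_1,t_2))$ one has
\begin{equation*}
\int u(y,t)\,\phi(y,t)\,dy\,dt\ =\ \int_B u_0(z)\,\Phi(z)\,dz, \qquad \Phi(z):=\int \phi(y,t)\,G_t(y-z)\,dy\,dt.
\end{equation*}
Cauchy--Schwarz would then close the estimate, provided that for each $\psi\in L^\infty(B)$ with $\|\psi\|_\infty\leq 1$ one can construct such a $\phi$ with $\Phi|_B=\psi$ and $\|\phi\|_{L^2}\leq C(R,t_1,t_2)$ uniformly in $\psi$.

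This last quantitative surjectivity of the adjoint kernel map from $L^2(2B\times(t_1,t_2))$ onto $L^\infty(B)$ is the step I expect to be the main obstacle: the smoothness of $G_t$ makes the forward map a compact operator, so the required uniform bound cannot come from soft functional-analytic arguments alone. To attack it I would exploit the explicit Gaussian form of $G_t$ together with either a Carleman-type weight adapted to the strict inclusion of $B$ inside $2B$, or a Paley--Wiener / analytic continuation argument using that $\widehat{u_0}$ is entire of exponential type $R$ by the compact support of $u_0$; both routes aim to convert the pointwise strict positivity of $G_t$ on $\overline{2B}\times\overline{B}\times[t_1,t_2]$ into the uniform operator-level estimate needed to complete the proof.
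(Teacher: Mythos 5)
Your treatment of the nonnegative case is correct and in fact more direct than the paper's: the uniform lower bound $G_t(y-z)\geq c_\ast>0$ on $\overline{2B}\times\overline{B}\times[t_1,t_2]$ immediately gives $u(y,t)\geq c_\ast\|u_0\|_{L^1}$ on the observation set, hence the estimate with an explicit constant. (The paper instead integrates the identity $\|u_0\|_{L^1}=\int\int u_0\,G_t$ over $\R^n$, splits at $|y|=2R$, and absorbs the exterior mass using that $\int_{|y|>2R}G_t(y-r)\,dy\leq\alpha<1$ for $r\in B$; both arguments are fine.) The problem is that the theorem is stated for signed $u_0$, and that is precisely where your argument stops. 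You acknowledge that the ``quantitative surjectivity'' of the adjoint is the main obstacle, but what you propose is not a viable route: the function $\Phi(z)=\int\phi(y,t)G_t(y-z)\,dy\,dt$ is real-analytic in $z$, so the condition $\Phi|_B=\psi$ cannot be met for a general $\psi\in L^\infty(B)$, and the correct approximate formulation (finding $\phi$ with $\|\phi\|_{L^2}\leq C$ uniformly such that $\int u_0\Phi$ approximates $\int u_0\psi$) is, by duality, \emph{equivalent} to the inequality you are trying to prove. As you yourself observe, the forward map is compact, so its adjoint admits no bounded right inverse in any naive sense; neither of the two attack routes you name is carried out, and it is not clear either would close the gap. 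The signed case is therefore unproven.

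For comparison, the paper handles the signed case by a compactness--uniqueness argument rather than by duality. Writing $u_0=u_0^+-u_0^-$ and applying the positive case to each part gives
$\|u_0\|_{L^1(\R^n)}\leq C\left(\|u\|_{L^2(2B\times(t_1,t_2))}+\|u^-\|_{L^2(2B\times(t_1,t_2))}\right)$,
where $u^\pm$ solve the heat equation with data $u_0^\pm$. One then shows that the map $u_0\mapsto u^-$ is compact from $L^1(B)$ into $L^2(2B\times(t_1,t_2))$ (Young's inequality bounds $u^-$ and $\nabla u^-$ in $L^2(t_1,t_2;H^1(2B))$ since $t\geq t_1>0$, and Rellich--Kondrachov does the rest), that $u_0\mapsto u$ is injective (unique continuation for the heat equation), and concludes via a closed-range/open-mapping argument that the compact remainder $\|u^-\|$ can be dropped. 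The price is that the resulting constant $C_3$ is purely existential --- a point the paper itself flags in its conclusions --- whereas your explicit constant is only available for $u_0\geq 0$. If you want to salvage your approach, the missing ingredient is exactly this absorption of the negative part, and some appeal to unique continuation (or an equivalent injectivity statement) appears unavoidable.
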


\Cref{th:Lipschitz stability} states that we can get an estimate of the initial condition $u_0$ with respect to observations made on an open set containing the support of $u_0$ and for times in a positive interval, while \cref{th:Lipschitz stability unbounded domain} states the analogous result for exterior measurements. To the best of our knowledge, few results about Lipschitz stability for backward heat equation exist in the literature. In \cite{Xu_Yamamoto_2000}, a similar estimate is obtained for the reconstruction of the solution at a positive time $t>0$ and measurements acquired on a subdomain, while in \cite{Saitoh_Yamamoto_1997}, a Lipschitz stability estimate is obtained for the problem of reconstructing the initial condition, although with a very strong norm associated to the (boundary) observations that involve the use of time derivatives of all orders. In our case, we exploit the explicit dependency on the heat equation solution in all of $\R^n$ with respect to the initial condition, as the convolution with the heat kernel.  

The study of the backward heat equation with compactly supported initial conditions arises from an inverse problem related to the microscopy technique performed by a Light Sheet Fluorescence Microscope (LSFM) \cite{Girkin_2018, Huisken_Stainier_2009, Lakowicz_2006}. Images obtained from this kind of microscopes present undesirable properties such as blurring and calibration problems so that, in order to improve the final images, a mathematical direct model was established in \cite{cueva2020mathematical} with the aim of characterizing and analyzing the imaging modality as an inverse problem. Such approach is applied to the imaging of two dimensional specimens, where the light sheet illumination reduces to a laser beam emitted at different heights $y$. The fluorescent distribution is denoted by $\mu$ and is the physical quantity to be reconstructed. At the end of the process, the measurement $p(s,y)$ obtained at pixel $s$ of the camera for the illumination at height $y\in Y_s$ is given by the following expression:
\begin{equation}\label{eq:Measurements Intro}
p(s,y) = c\cdot\text{exp}\left(-\dint_{\gamma(y)}^s\lambda(\tau,y)d\tau\right)\dint_{\R}\dfrac{\mu(s,r)e^{-\int_r^{\infty}a(s,\tau)d\tau}}{\sqrt{4\pi \sigma(s,y)}}\text{exp}\left(-\dfrac{(r-h)^2}{4\sigma(s,y)}\right)dr,
\end{equation}
where 
\[\sigma(s,y) = \dfrac{1}{2}\dint_{\gamma(y)}^s(s-\tau)^2\psi(\tau,y)d\tau.\]
Here, $\lambda, a$ and $\psi$ are physical parameters related with attenuation or scattering and $\gamma$ is a function related with the geometry of $\Omega$, more specifically, $\gamma(y)$ is defined such that $(\gamma(y), y)$ is the first point at height $y$ belonging to $\partial{\Omega}$. These and other terms shall be presented in detail in \cref{sec:LSFM-stability}.

If we fix pixel $s$ and take 
\[u_0(y):=\mu(s,y)e^{-\int_y^{\infty}a(s,\tau)d\tau},\]
the solution $u$ of \cref{eq:Heat Equation} with $n=1$ evaluated in $(y,\sigma(s,y))$ gives us the measurement obtained by the camera at the pixel $s$ for an illumination made at height $y$. Furthermore, $\mu$ is compactly supported, hence $\mu(s,\cdot)$ is as well. The relation between measurements and $u$ is given by the next expression:
\[\begin{array}{rl}
&p(s,y)=c\cdot\text{exp}\left(-\dint_{\gamma(y)}^s\lambda(\tau,y)d\tau\right)u(y,\sigma(s,y))\\ 
\Longleftrightarrow& u(y,\sigma(s,y)) = \dfrac{1}{c} \text{exp}\left(\dint_{\gamma(y)}^s\lambda(\tau,y)d\tau\right) p(s,y).\end{array}\]
This tells us that if we know physical the parameters $\lambda, \psi$ and $a$, then we have access to measurements of $u$ along the curve $\Gamma = \{(y,\sigma(s,y)):y\in Y_s\}\subseteq \R \times (0,T)$. Consequently, the inverse problem consists in the recovery of the initial temperature from these observations.

Uniqueness has been proved in \cite{cueva2020mathematical} based on classical unique continuation results for parabolic equations.  In this paper we also study the Lipschitz stability in the reconstruction of the fluorescence source $\mu$ from measurements available on $\Gamma$. This result will be a direct consequence of the following theorem for the reconstruction of the initial temperature from observations made on a curve contained in $\R\times[0,\infty)$, which is constructed as the graph of a function $\sigma$ that satisfies the following $\sigma$-\emph{properties}:
\begin{enumerate}[i)]
\item $\sigma \in C^1(\R)$,
\item $\sigma>0$ for $y\in (a_1,a_2)$ and $\sigma(y)\equiv 0 $ for $y\in (a_1,a_2)^c$, for some $a_1<a_2$,
\item there exists $\xi_1,\xi_2>0$ such that $\sigma'>0$ in $(a_1,a_1+\xi_1]$, $\sigma'<0$ in $[a_2-\xi_2,a_2)$ and $\sigma(a_1+\xi_1)=\sigma(a_2-\xi_2)$,
\item $\dfrac{1}{\sigma'(y)} = \mathcal{O}\left(\exp\left(\dfrac{1}{\sigma(y)}\right)\right)$ as $y$ goes to $a_1^+,a_2^-$.
\end{enumerate}
The theorem is stated as follows:
\begin{theorem}\label{th:Curve stability}
Consider $\sigma:\R\to\R_+$ a function satisfying the  $\sigma$-properties. Let $u$ be the solution of \cref{eq:Heat Equation} with $n=1$ for some $u_0\in L^1(\R)$ such that $\supp(u_0)\subset (a_1+\delta, a_2-\delta)$, where $0<\delta<(a_2-a_1)/2$. Let $\Gamma_L:=\{(y,\sigma(y)):y\in(-\infty,a_1+\xi_1]\}$ and $\Gamma_R:=\{(y,\sigma(y)):y\in[a_2-\xi_2,\infty)\}$ be two curves contained in $\R\times[0,\infty)$ where measurements are available. Then there exists a constant $C_4 = C_4(\sigma,\delta)>0$ such that
\[||u_0||_{L^1(\R)}\leq C_4||u||_{L^1(\Gamma_L\cup\Gamma_R)}\]
\end{theorem}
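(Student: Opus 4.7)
The plan is to reduce the estimate to \cref{th:Lipschitz stability}, which provides a Lipschitz bound of $\|u_0\|_{L^1(\R)}$ in terms of an $L^2$-norm of $u$ over a 2D space--time cylinder $2B\times(t_1,t_2)$. To this end, set $y_0:=(a_1+a_2)/2$ and choose $R>(a_2-a_1)/2$ so that $B(y_0,R)\supset\supp(u_0)$. Using the translation-invariance of the heat equation, \cref{th:Lipschitz stability} can be applied with the ball centered at $y_0$ in place of the origin, yielding
\[
\|u_0\|_{L^1(\R)}\le C_3\,\|u\|_{L^2(B(y_0,2R)\times(t_1,t_2))}
\]
for any $0<t_1<t_2$. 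I would then pick $t_1,t_2\in(0,\sigma_0)$ with $\sigma_0:=\sigma(a_1+\xi_1)=\sigma(a_2-\xi_2)$, so that every horizontal slice $\R\times\{t\}$ with $t\in(t_1,t_2)$ intersects both $\Gamma_L$ and $\Gamma_R$ in exactly one point each.

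The remaining and principal step is to establish a bound of the form
\[
\|u\|_{L^2(B(y_0,2R)\times(t_1,t_2))}\le C\,\|u\|_{L^1(\Gamma_L\cup\Gamma_R)}.
\]
Since $u_0$ has compact support, $u$ is real-analytic in $(y,t)$ on $\R\times(0,\infty)$, and by $\sigma$-property (iii) the curves $\Gamma_L$, $\Gamma_R$ are non-characteristic for the heat operator on the portions where $\sigma'\neq 0$. The plan is to apply a quantitative unique continuation of Carleman type, with a weight function vanishing on $\Gamma_L\cup\Gamma_R$ and adapted to the level sets of $\sigma$, in order to propagate the data on the curves inward and control $u$ on a neighborhood, and then to pass to the full cylinder by interpolation in time. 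The $\sigma$-property (iv), $1/\sigma'(y)=\mathcal{O}(\exp(1/\sigma(y)))$ as $y\to a_1^+$ or $y\to a_2^-$, is precisely the balance needed so that the Jacobian of the change of variables between $y$-parametrization and arc-length on the curves remains compatible with the exponential decay of the heat kernel near the points where $\sigma$ vanishes, so that the contributions near the endpoints can be absorbed into $\|u\|_{L^1(\Gamma_L\cup\Gamma_R)}$.

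The main obstacle is precisely this quantitative continuation from a 1D curve to a 2D cylinder: for generic configurations one would only obtain a logarithmic-type estimate, whereas here a Lipschitz bound is required. Obtaining it will rely on the simultaneous use of the analyticity of $u$ for $t>0$ (from the compact support of $u_0$), the non-characteristic geometry coming from $\sigma$-property (iii), and the endpoint control supplied by $\sigma$-property (iv). Constructing a Carleman weight meeting all of these demands, and verifying that the resulting estimate remains finite at the points where the curves meet the $y$-axis, is the technical heart of the argument.
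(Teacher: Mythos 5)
There is a genuine gap: the entire weight of your argument rests on the ``principal step'' $\|u\|_{L^2(B(y_0,2R)\times(t_1,t_2))}\le C\|u\|_{L^1(\Gamma_L\cup\Gamma_R)}$, which you only sketch as a plan, and this step goes in a direction that is unlikely to yield a Lipschitz bound. The cylinder $B(y_0,2R)\times(t_1,t_2)$ covers the whole interval $[a_1,a_2]$, including the data-free region under the ``hump'' of $\sigma$ between $a_1+\xi_1$ and $a_2-\xi_2$ where the support of $u_0$ sits; on each time slice your data consist of two point values of $u$ (one on each curve), and you have no control of $u_y$ there, whereas a Carleman-based lateral continuation from a non-characteristic curve requires full Cauchy data. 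Even granting that, quantitative unique continuation inward for the heat equation gives H\"older or logarithmic estimates, not Lipschitz ones, as you yourself concede; analyticity of $u$ for $t>0$ does not by itself upgrade this, so the ``technical heart'' you defer is precisely the part that would fail.

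The paper avoids this continuation problem entirely by observing on the \emph{other} side of the curves. It takes $\omega=[a_1,a_2]^c$ and applies \cref{th:Lipschitz stability unbounded domain} (exterior observation, itself built from \cref{th:Energy} and \cref{th:Lipschitz stability}) to get $\|u_0\|_{L^1(\R)}\le C_2\|u\|_{L^2(\omega\times(0,T))}$. The region $\omega\times(0,T)$ lies below the curves, so its energy is controlled by the exterior energies $I_L(t)=\frac{1}{2}\int_{-\infty}^{\rho_L(t)}|u(y,t)|^2\,dy$ and its right-hand analogue, where $\rho_L=\sigma^{-1}$. Differentiating $I_L$ and integrating by parts (\cref{th:Curve estimate}), the Dirichlet term $-\int|u_y|^2$ is discarded and the boundary terms $\frac{1}{2}g_L^2\rho_L'+g_Lu_y(\rho_L(t),t)$ are bounded by $C\|u_0\|_{L^1(\R)}|g_L(t)|$ using the explicit heat-kernel representation of $u$ together with $\sigma$-properties (iii)--(iv) (property (iv) is what tames $\rho_L'\to\infty$ at the endpoints, against the Gaussian decay coming from $\mathrm{dist}(\supp u_0,\{a_1,a_2\})\ge\delta$). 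This yields $\|u\|^2_{L^2(\omega\times(0,T))}\le C_9T\|u_0\|_{L^1(\R)}\|u\|_{L^1(\Gamma_L\cup\Gamma_R)}$, and the factor $\|u_0\|_{L^1(\R)}$ cancels against the square of the first estimate, giving the Lipschitz bound. You would need to replace your continuation step by an argument of this kind (or supply a genuinely new Lipschitz continuation result) for the proposal to go through.
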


\begin{remark}
The mentioned $\sigma$-properties, specially the last one, may not be necessary conditions to conclude \cref{th:Curve stability}, but are suitable for the LSFM inverse problem.
\end{remark}

In particular, this theorem implies uniqueness for the inverse problem. Numerical results are carried out after discretizing \cref{eq:Measurements Intro}. Notice that measurements are linear with respect to $\mu$, hence we investigate the stability of the LSFM problem by solving a linear system. Moreover, for the matrix associated, we study its condition number in order to appreciate the behavior of the stability constant. At this point we have to be careful: a Lipschitz type stability may be good from the mathematical point of view, but if the constant is too large with respect to noise level measurements, then the numerical reconstruction may not be satisfactory. Finally, we consider what happens with the reconstruction when the physical parameters $\lambda, a$ and $\psi$ depend on $\mu$, \emph{i.e.}, are also unknown.

The paper is organized as follows: \cref{sec:Log-stability} is devoted to demonstrate \cref{th:Log-stability}. There, we introduce \cref{th:Energy} to show an energy estimate of $u$ which is used later in \cref{sec:Lipschitz-stability} to prove \cref{th:Lipschitz stability}. In \cref{sec:Curve Stability} we prove \cref{th:Curve stability}. Section \ref{sec:LSFM-stability} proves the stability of the 2D LSFM inverse problem. Finally, section \ref{sec:Numerical results} studies from the numerical point of view the result obtained for the LSFM problem.

\section{Conditional Logarithmic Stability}\label{sec:Log-stability}

As said before, since the backward heat equation inverse problem is a well known ill-posed problem, we use the admissible set $\mathcal{A}_{\beta,M}$ previously defined in \cref{def:Admissible set} to add some a priori information on the solution.

To prove \cref{th:Log-stability} let us demonstrate two theorems:

\begin{theorem}\label{th:Energy}
Let $0\leq\tau<T$ and $\omega\subseteq\R^n$ be an open set such that $\R^n\backslash\omega$ is compact. Let $u$ be a solution of \cref{eq:Heat Equation}. Then for all $0<\varepsilon<(T-\tau)/2$ there exists a constant $C_5=C_5(\varepsilon,\tau,T,\omega)>0$ such that
\[||u_t||_{L^2(\tau+\varepsilon,T-\varepsilon;H^{-1}(\R^n))}+||u||_{L^2(\tau+\varepsilon,T-\varepsilon;H^{1}(\R^n))}\leq C_5||u||_{L^2(\tau,T;L^2(\omega))}.\]  
\end{theorem}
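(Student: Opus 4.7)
Since $K:=\R^n\setminus\omega$ is compact, my plan is to reduce the estimate on the unbounded domain $\R^n$ to the Carleman inequality of \cite{Imanuvilov} on a bounded domain, by means of a spatial cutoff---the same strategy \cite{Cabanillas} uses for null controllability with a control region whose complement is bounded. Pick $R>0$ large enough that $K\Subset B_R:=B(0,R)$ and choose $\chi\in C_c^\infty(\R^n)$ with $\chi\equiv 1$ on $B_R$ and $\supp\chi\subset B_{2R}$. The crucial geometric feature is $\supp\nabla\chi\subset B_{2R}\setminus B_R\subset\omega$. Then $v:=\chi u$ solves the nonhomogeneous heat equation
\[v_t-\Delta v \;=\; f:=-2\nabla\chi\cdot\nabla u-(\Delta\chi)u \quad\text{in }B_{2R}\times(\tau,T),\]
with homogeneous Dirichlet condition on $\partial B_{2R}$ and with $\supp f\subset\omega$.

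Next I would apply Imanuvilov's Carleman estimate on the bounded cylinder $B_{2R}\times(\tau,T)$ with Dirichlet boundary and observation subset $\omega\cap B_{2R}$; restricting to $(\tau+\varepsilon,T-\varepsilon)$, where the Carleman weight is bounded below by a positive constant, this gives
\[||v||_{L^2(\tau+\varepsilon,T-\varepsilon;H^1(B_{2R}))}\le C\bigl(||f||_{L^2(\tau,T;L^2(B_{2R}))}+||u||_{L^2(\tau,T;L^2(\omega))}\bigr).\]
The forcing $f$ contains $\nabla u$, but since $\supp\nabla\chi$ lies strictly inside $\omega$, a standard Caccioppoli/interior parabolic estimate bounds $||\nabla u||_{L^2(\supp\nabla\chi\times(\tau,T))}$ by $||u||_{L^2(\omega'\times(\tau,T))}$ for some slight spatial enlargement $\omega'\subset\omega$ of $\supp\nabla\chi$. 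The whole right-hand side therefore collapses to $C\,||u||_{L^2(\tau,T;L^2(\omega))}$.

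On $B_R$ one has $u=v$, while on $\R^n\setminus B_R\subset\omega$ the function $u$ is directly given by the observation; a further Caccioppoli argument (with a time cutoff vanishing at $\tau$ and $T$) promotes the $L^2$ bound on $(\R^n\setminus B_R)\times(\tau,T)$ to an $L^2_tH^1_x$ bound on $(\R^n\setminus B_R)\times(\tau+\varepsilon,T-\varepsilon)$. Combining the two pieces yields the $||u||_{L^2(\tau+\varepsilon,T-\varepsilon;H^1(\R^n))}$ part of the inequality. The $L^2_tH^{-1}_x$ bound on $u_t$ then follows immediately from the PDE, $u_t=\Delta u$, together with the continuity of $\Delta\colon H^1(\R^n)\to H^{-1}(\R^n)$.

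The main obstacle I expect is the unboundedness of $\R^n$: Imanuvilov's Carleman inequality is tailored to bounded domains and has no built-in way to control the tail of $u$ at infinity. The cutoff trick sidesteps this, but it relies critically on the strict inclusion $\supp\nabla\chi\Subset\omega$, so that the commutator forcing $f$, which unavoidably carries $\nabla u$, can be reabsorbed by an interior regularity estimate without ever leaving the observation region. Careful bookkeeping of the two time losses---one from the Carleman weight, one from the Caccioppoli time cutoff---will also be needed to fit both into the single inner interval $(\tau+\varepsilon,T-\varepsilon)$.
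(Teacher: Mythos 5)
Your overall architecture coincides with the paper's: localize $u$ to a bounded domain by a spatial cutoff whose gradient is supported inside $\omega$, apply Imanuvilov's Carleman estimate there, recover the $L^2$ bound on the unobserved region via Poincar\'e, upgrade to $L^2_tH^1_x$ by an energy identity with a time cutoff, and read off the $u_t$ bound from the equation. The gap is in how you absorb the commutator forcing $f=-2\nabla\chi\cdot\nabla u-(\Delta\chi)u$. You claim that an interior Caccioppoli estimate bounds $\|\nabla u\|_{L^2(\supp\nabla\chi\times(\tau,T))}$ by $\|u\|_{L^2(\omega'\times(\tau,T))}$, but no such estimate holds on the \emph{full} time interval: a parabolic Caccioppoli inequality necessarily shrinks the cylinder away from its initial time (or else retains a trace term $\|u(\cdot,\tau)\|_{L^2}$), and for $\tau=0$ with $u_0$ merely in $L^2$ one has $\|\nabla u(\cdot,t)\|_{L^2}\sim t^{-1/2}$, so the left-hand side of your claimed bound can be infinite while the right-hand side is finite. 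Since the Carleman inequality requires the source over all of $(\tau,T)$, the uncontrolled sliver near $t=\tau$ cannot simply be discarded, and the step as written fails.

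There are two ways to repair this. One is to keep the Carleman weight $e^{s\zeta}$ in the source norm (it vanishes like $e^{-c/(t-\tau)}$ as $t\to\tau^+$) and prove a weighted Caccioppoli estimate; this can be made to work but needs care. The paper instead invokes the version of Imanuvilov's inequality in which the source is measured in the weighted $L^2(\tau,T;H^{-1}(\Theta))$ norm and integrates by parts,
\[-2(\nabla\chi\cdot\nabla u)e^{s\zeta}=-2\nabla\cdot\bigl(ue^{s\zeta}\nabla\chi\bigr)+2ue^{s\zeta}\Delta\chi+2us\,e^{s\zeta}\nabla\chi\cdot\nabla\zeta,\]
so that the weighted $H^{-1}$ norm of the commutator is controlled by $\|u\|_{L^2((\tau,T)\times\omega)}$ alone: no gradient of $u$ and no time loss ever enters. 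If you adopt this device (or a weighted Caccioppoli argument), the rest of your plan closes and is essentially the paper's proof; as written, the absorption of the $\nabla u$ term is invalid.
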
 

\begin{remark}
The above result holds true even for $\tau=0$ but the constant $C_5$ tends to $\infty$ as $\varepsilon$ tends to $0$.
\end{remark}

\begin{remark}
\Cref{th:Energy} holds true also after replacing $\R^n$ by an unbounded domain $\Omega$ of class $C^2$ uniformly. This could help to extend the logarithmic stability to a more general unbounded set (not only $\R^n$), however, \cref{th:Holder} below fails when dealing with such sets. Hence, the conditional logarithmic stability for a general unbounded set when measurements are made in the region $\omega\times(\tau,T)$ remains an open problem.
\end{remark}

\begin{proof}[Proof of \cref{th:Energy}]

We follow \cite{Cabanillas} to get an estimate of $||u||_{L^2(\tau+\varepsilon,T-\varepsilon;L^2(\R^n))}$. To estimate $||\nabla u||^2_{L^2(\tau+\varepsilon,T-\varepsilon;L^2(\R^n))}$ we make a slight modification to the same argument to conclude. Finally, $||u_t||_{L^2(\tau+\varepsilon,T-\varepsilon;H^{-1}(\R^n))}$ is directly estimated from the heat equation \cref{eq:Heat Equation}. In the points I)-III) presented below, we respectively estimate each one of these terms:

\begin{enumerate}[I)]

\item\label{item:Norm L2 wrt measurements} Estimation of $||u||_{L^2(\tau+\varepsilon,T-\varepsilon;L^2(\R^n))}$. Let $\delta>0$ small enough. Without loss of generality we may assume that $\R^n\backslash\omega$ is connected. Then, we define a cut-off function $\rho \in C^{\infty}(\R^n)$ as follows:

\[\left\{\begin{array}{cl}
\rho=1 & \text{ in } \R^n\backslash\omega\\
\rho = 0 & \text{ in } \omega_{\delta}:=\{x\in \omega : d(x,\partial \omega)>\delta\}\\
\rho\in (0,1] & \text{ in } \omega\backslash\omega_{\delta}.
\end{array}\right.\]


\FloatBarrier

The aim of this function is to localize the solution in the bounded set where observations are not available. Consider $\theta = \rho u$ and let $\Theta=\{x\in \R^n : \rho(x)>0\}$. Notice that $\theta =0 $ in $\R^n\backslash\Theta$ and since $u$ satisfies \cref{eq:Heat Equation}, then $\theta$ satisfies the following parabolic equation in a bounded domain:
\begin{equation}\label{eq:theta}
\left\{\begin{array}{rcll}
\theta_t-\Delta\theta&=&g & \text{ in } \Theta \times (0,T)\\
\theta &=&0& \text{ on } \partial \Theta \times (0,T)\\
\theta(x,0)& =& \rho u_0(x) & \text{ in } \Theta,  
\end{array}\right.
\end{equation}
where $g=-\Delta\rho u - 2\nabla\rho \cdot\nabla u$. Since $\Theta$ is bounded, we can apply the Carleman estimate shown in \cite{Imanuvilov} with $l=1$. More precisely, let $\nu$ be defined as below (we refer to \cite{Chae_Imanuvilov_Kim_1996}, lemma 1.1, for the existence of such function):
\[\left\{\begin{array}{l}
\nu \in C^2(\bar{\Theta})\\
\nu>0 \text{ in } \Theta\text{, }\nu=0 \text{ on } \partial \Theta\\
\nabla\nu \neq 0 \text{ in } \overline{\Theta\backslash\omega} 
\end{array}\right.\]  

and consider the following Carleman weights:
\begin{equation}\label{eq:Carleman weights}\xi(x,t)=\dfrac{e^{\lambda\nu(x)}}{(t-\tau)(T-t)},\quad \zeta(x,t) =\dfrac{e^{\lambda\nu(x)}-e^{2\lambda||\nu(x)||_{C(\overline{\Theta})}}}{(t-\tau)(T-t)}.
\end{equation}
Thus, from \cite{Imanuvilov} we know that the next Carleman estimate holds: there exists $\hat{\lambda}>0$ such that for an arbitrary $\lambda \geq \hat{\lambda}$ there exists $s_0(\lambda)$ and a constant $C>0$ satisfying
\begin{equation}\label{ineq:Carleman inequality}
\begin{array}{l}
\dint_{\tau}^T\dint_{\Theta}\left(\dfrac{1}{s\xi}|\nabla\theta|^2+s\xi|\theta|^2\right)e^{2s\zeta}dxdt\\
\leq C\left(||ge^{s\zeta}||^2_{L^2(\tau,T;H^{-1}(\Theta))}+\dint_{\tau}^T\dint_{\Theta\cap\omega}s\xi|\theta|^2e^{2s\zeta}dxdt\right) \quad \forall s\geq s_0(\lambda),
\end{array}
\end{equation} 
where $\theta$ is the solution of \cref{eq:theta}. Let us estimate the terms in the right and left hand sides of \cref{ineq:Carleman inequality}: 
\begin{itemize}
\item Recall that $g = -\Delta \rho u - 2\nabla\rho \cdot\nabla u$. Noticing that $\Delta \rho = 0$ in $\Theta \backslash \omega$ and $e^{s\zeta}<1$ (since $\zeta<0$), the first term is directly estimated as follows
\[||\Delta\rho u e^{s\zeta}||_{L^2(\tau,T;H^{-1}(\Theta))}^2\leq C||u||^2_{L^2((\tau,T)\times\omega)},\]
where the constant $C>0$ depends on $\rho$. For the second term, we notice that
\[-2(\nabla\rho \cdot \nabla u)e^{s\zeta} = -2\nabla \cdot (ue^{s\zeta}\nabla\rho)+2ue^{s\zeta}\Delta \rho +2use^{s\zeta}\nabla \rho \cdot \nabla \zeta.\]
Again, $\nabla \rho =0$, $\Delta \rho=0$ in $\Theta\backslash \omega$, $e^{s\zeta}<1$. Besides, noticing that there exists $s_1>0$ such that $\left|se^{s\zeta}\dfrac{\partial\zeta}{\partial x_i}\right|<1$, $\forall i\in\{1,\ldots,n\}$, we have that 
\[||2(\nabla\rho \cdot \nabla u)e^{s\zeta}||^2_{L^2(\tau,T;H^{-1}(\Theta))} \leq C ||u||^2_{L^2(\tau,T;L^2(\omega))}, \quad \forall s\geq s_1.\]
The constant $C>0$ depends on $\rho$. Finally, we conclude that
\begin{equation}\label{ineq:Estimation of g} 
||ge^{s\zeta}||_{L^2(\tau,T;H^{-1}(\Theta))}^2\leq C ||u||^2_{L^2((\tau,T)\times\omega)}, \quad \forall s\geq s_1.
\end{equation}

\item We define the functions $\hat{\xi}$, $\hat{\zeta}$ as in \cref{eq:Carleman weights} but with $\lambda = \hat{\lambda}$. Since $\nu \in C^2(\overline{\Theta})$ there exist constants $\eta_1,\eta_2>0$ such that 
\[\dfrac{\eta_1}{(t-\tau)(T-t)}\leq \hat{\xi}\leq \dfrac{\eta_2}{(t-\tau)(T-t)}.\]

Finally, let $\hat{s}:=\displaystyle\max\{s_0(\hat{\lambda}),s_1\}$. Inequality \cref{ineq:Carleman inequality} leads to
\begin{equation}\label{ineq:Carleman inequality 1}
\begin{array}{l}
\dint_{\tau}^T\dint_{\Theta}\left(\dfrac{(t-\tau)(T-t)}{\hat{s}}|\nabla\theta|^2+\dfrac{\hat{s}}{(t-\tau)(T-t)}|\theta|^2\right)e^{2\hat{s}\hat{\zeta}}dxdt\\
\leq C\left(||u||^2_{L^2((\tau,T)\times\omega)}+\dint_{\tau}^T\dint_{\Theta\cap\omega}\dfrac{\hat{s}}{(t-\tau)(T-t)}|\theta|^2e^{2\hat{s}\hat{\zeta}}dxdt\right).
\end{array}
\end{equation}

\end{itemize}

We now estimate the weights in \cref{ineq:Carleman inequality 1} using the following lemma from \cite{Cabanillas}:

\begin{lemma}\label{lem:cotas} 

Let $k,K$ be two positive constants such that

\[k\leq  e^{2\hat{\lambda}||\nu||_{C(\Theta)}}-e^{\hat{\lambda}\nu(x)}\leq K,\quad x\in \overline{\Theta}.\]

Then, for $x\in \overline\Theta$ and $0<\varepsilon<(T-\tau)/2$ we have
\[\left|\left|\dfrac{\hat{s}}{(t-\tau)(T-t)}e^{2\hat{s}\hat{\zeta}}\right|\right|_{L^{\infty}((\Theta\cap\omega)\times(\tau,T))}\leq \dfrac{1}{2k}e^{-1},\]

\[\dfrac{(t-\tau)(T-t)}{\hat{s}}e^{2\hat{s}\hat{\zeta}}\geq \dfrac{\varepsilon(T-\tau-\varepsilon)}{\hat{s}}\text{\emph{exp}}\left(\dfrac{-2\hat{s}K}{\varepsilon(T-\tau-\varepsilon)}\right), \quad t\in\left[\tau+\varepsilon,T-\varepsilon\right].\]



\end{lemma}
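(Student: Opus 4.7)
The plan is to handle both inequalities by rewriting $2\hat{s}\hat{\zeta}$ in terms of the auxiliary quantity $\alpha(x):=e^{2\hat\lambda\|\nu\|_{C(\overline\Theta)}}-e^{\hat\lambda\nu(x)}$, which lies in $[k,K]$ for all $x\in\overline\Theta$ by the hypothesis of the lemma. With this notation,
\[
\hat\zeta(x,t)=-\dfrac{\alpha(x)}{(t-\tau)(T-t)},
\]
so that $\hat\zeta\le 0$ on $\overline\Theta\times(\tau,T)$ and the whole estimate is reduced to a one-variable calculus exercise involving the parabola $p(t):=(t-\tau)(T-t)$ and the function $y\mapsto ye^{-y}$.

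For the first inequality, I would introduce the dimensionless variable $y:=2\hat s\,\alpha(x)/[(t-\tau)(T-t)]>0$, so that
\[
\dfrac{\hat s}{(t-\tau)(T-t)}\,e^{2\hat s\hat\zeta(x,t)} \;=\; \dfrac{y\,e^{-y}}{2\alpha(x)}.
\]
Since $\max_{y>0}ye^{-y}=e^{-1}$, attained at $y=1$, and since $\alpha(x)\ge k$ on $\overline\Theta$, we get
\[
\dfrac{\hat s}{(t-\tau)(T-t)}\,e^{2\hat s\hat\zeta(x,t)}\;\le\;\dfrac{e^{-1}}{2\alpha(x)}\;\le\;\dfrac{1}{2k}e^{-1},
\]
uniformly in $(x,t)\in(\Theta\cap\omega)\times(\tau,T)$, which is the claimed bound. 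Note that the factor $\Theta\cap\omega$ plays no role in the argument; the estimate is already valid on the full cylinder.

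For the second inequality, the key observation is that the parabola $p(t)=(t-\tau)(T-t)$ is concave, symmetric about $t=(\tau+T)/2$, and takes the common value $\varepsilon(T-\tau-\varepsilon)$ at the two endpoints of $[\tau+\varepsilon,T-\varepsilon]$. Hence $p(t)\ge\varepsilon(T-\tau-\varepsilon)$ on this subinterval, which yields immediately
\[
\dfrac{(t-\tau)(T-t)}{\hat s}\;\ge\;\dfrac{\varepsilon(T-\tau-\varepsilon)}{\hat s}.
\]
Using the same lower bound on $p(t)$ together with $\alpha(x)\le K$ in the numerator, we also get
\[
2\hat s\hat\zeta(x,t)\;=\;-\dfrac{2\hat s\,\alpha(x)}{(t-\tau)(T-t)}\;\ge\;-\dfrac{2\hat s K}{\varepsilon(T-\tau-\varepsilon)},
\]
and multiplying the two previous estimates (after exponentiating the last one) gives the desired lower bound.

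Genuinely there is no obstacle: both statements reduce, after the change of variables above, to the calculus identity $\max_{y>0}ye^{-y}=e^{-1}$ and to the elementary monotonicity of $p(t)=(t-\tau)(T-t)$ on $[\tau+\varepsilon,T-\varepsilon]$. The only care needed is to track signs (the exponent $2\hat s\hat\zeta$ is non-positive) and to invoke the uniform bound $\alpha(x)\in[k,K]$ on $\overline\Theta$, which is precisely the standing hypothesis.
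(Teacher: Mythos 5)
Your proof is correct: writing $\hat\zeta=-\alpha(x)/[(t-\tau)(T-t)]$ with $\alpha\in[k,K]$, the first bound follows from $\max_{y>0}ye^{-y}=e^{-1}$ and the second from $(t-\tau)(T-t)\ge\varepsilon(T-\tau-\varepsilon)$ on $[\tau+\varepsilon,T-\varepsilon]$, which is exactly the standard argument; the paper itself states this lemma without proof, quoting it from the reference [Cabanillas]. One cosmetic slip: in your closing sentence you attribute the second estimate to "monotonicity" of $p(t)=(t-\tau)(T-t)$, but $p$ is not monotone on $[\tau+\varepsilon,T-\varepsilon]$ --- what you actually (correctly) used is its concavity together with the equal values at the two endpoints.
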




From this lemma, the left-hand side in \cref{ineq:Carleman inequality 1} takes the form
\begin{equation}\label{ineq:Carleman lado izquierdo}
\begin{array}{l}
\hspace{-1cm} \dint_{\tau}^{T}\dint_{\Theta}\left(\dfrac{(t-\tau)(T-t)}{\hat{s}}|\nabla\theta|^2+\dfrac{\hat{s}}{(t-\tau)(T-t)}|\theta|^2\right)e^{2\hat{s}\hat{\zeta}}dxdt \\ 

\hspace{3cm} \geq \dint_{\tau+\varepsilon}^{T-\varepsilon}\dint_{\Theta}\dfrac{(t-\tau)(T-t)}{\hat{s}}|\nabla\theta|^2e^{2\hat{s}\hat{\zeta}}dxdt\\

\hspace{3cm} \geq C(\varepsilon)\dint_{\tau+\varepsilon}^{T-\varepsilon}\dint_{\Theta}|\nabla\theta|^2dxdt,
\end{array}
\end{equation}
where $C(\varepsilon)$ is the lower bound of the last inequality in  \cref{lem:cotas} and tends to $0$ as $\varepsilon$ tends to $0$. On the other hand, the second term in the right-hand side in \cref{ineq:Carleman inequality 1} is estimated as follows:
\begin{equation}\label{ineq:Carleman lado derecho}\begin{array}{rcl}
\dint_{\tau}^T\dint_{\Theta\cap \omega}\frac{\hat{s}}{(t-\tau)(T-t)}|\theta|^2e^{2s\zeta}dxdt&\leq&\dfrac{1}{2k}e^{-1}\dint_{\tau}^T\dint_{\Theta\cap\omega}|\theta|^2dxdt\\

(\text{since }\rho<1 \text{ in } \Theta\cap\omega) &\leq & \dfrac{1}{2k}e^{-1}\dint_{\tau}^T\dint_{\Theta\cap\omega}|u|^2dxdt \\

&\leq&C||u||^2_{L^2(\tau,T;L^2(\omega))}.
\end{array}
\end{equation}

Thus, from \cref{ineq:Carleman inequality 1}, \cref{ineq:Carleman lado izquierdo} and \cref{ineq:Carleman lado derecho} we have that
\begin{equation}\label{ineq:Carleman inequality 2}
\dint_{\tau+\varepsilon}^{T-\varepsilon}\dint_{\Theta}|\nabla\theta|^2dxdt\leq \dfrac{C}{C(\varepsilon)} ||u||^2_{L^2(\tau,T;L^2(\omega))}.
\end{equation}

Since $\theta$ is null on $\partial \Theta$, we use $\lambda_1$ the first eigenvalue of $-\Delta$ in $H_0^1(\Theta)$. Furthermore, $\rho=1$ in $\R^n\backslash\omega$, hence
\[\lambda_1\dint_{\tau+\varepsilon}^{T-\varepsilon}\dint_{\R^n\backslash\omega}|u|^2dxdt = \lambda_1\dint_{\tau+\varepsilon}^{T-\varepsilon}\dint_{\R^n\backslash\omega}|\theta|^2dxdt \leq \dint_{\tau+\varepsilon}^{T-\varepsilon}\dint_{\Theta}|\nabla\theta|^2dxdt.\]

Hence we conclude that
\begin{equation}\label{ineq:Observability u}
||u||^2_{L^2(\tau+\varepsilon,T-\varepsilon;L^2(\R^n))} \leq \dfrac{C}{\lambda_1C(\varepsilon)}||u||^2_{L^2((\tau,T)\times\omega)}.
\end{equation}

\item Estimation of $||\nabla u||_{L^2(\tau+\varepsilon,T-\varepsilon;L^2(\R^n))}$. We focus on the second inequality in \cref{lem:cotas} but for $t\in\left[\tau+\varepsilon/2,T-\varepsilon\right]$. When $t$ is in the latter interval we have the following estimate:
\[\dfrac{(t-\tau)(T-t)}{\hat{s}}e^{2\hat{s}\hat{\zeta}}\geq \dfrac{\varepsilon/2(T-\tau-\varepsilon/2)}{\hat{s}}\text{exp}\left(\dfrac{-2\hat{s}K}{\varepsilon/2(T-\tau-\varepsilon/2)}\right)=:\bar{C}(\varepsilon).\]

Same calculations as in the previous item lead to
\begin{equation}\label{ineq:Carleman inequality 3}
\dint_{\tau+\varepsilon/2}^{T-\varepsilon}\dint_{\R^n}|u|^2dxdt \leq \dfrac{C}{\lambda_1\bar{C}(\varepsilon)}||u||^2_{L^2(\tau,T;L^2(\omega))}.
\end{equation}

Let $\chi(t)\in C^{\infty}([\tau,T])$, with $\chi(t)=0$ for $t\in [\tau,\tau+\varepsilon/2]$, $\chi(t)$ strictly increasing in $(\tau+\varepsilon/2,T-\varepsilon)$, and $\chi(t)=1$ in $[T-\varepsilon,T]$. Multiplying the heat equation \cref{eq:Heat Equation} by $u\chi(t)$ and integrating over $\R^n$, we get 

\[\dint_{\R^n}|\nabla u|^2\chi dx+\frac{1}{2}\frac{d}{dt}\dint_{\R^n}u^2\chi dx = \frac{1}{2}\dint_{\R^n}u^2\chi_t dx.\]

Now, integrating over $[\tau+\varepsilon/2,t]$:
\[\begin{array}{l}
\dint_{\tau+\varepsilon/2}^{t}\dint_{\R^n}|\nabla u|^2\chi dxdt+\frac{1}{2}\dint_{\R^n}\underbrace{u^2(t)\chi (t)}_{\geq 0}-\underbrace{u^2(\tau+\varepsilon/2)\chi(\tau+\varepsilon/2)}_{=0 \text{, since } \chi(\tau+\varepsilon/2)=0}dx\\
\hspace{5cm}=\dfrac{1}{2}\dint_{\tau+\varepsilon/2}^t\dint_{\R^n}u^2\chi_tdxdt,
\end{array}\]
therefore 
\[\dint_{\tau+\varepsilon/2}^t\dint_{\R^n} |\nabla u|^2\chi dxdt \leq \dfrac{1}{2}\dint_{\tau+\varepsilon}^t\dint_{\R^n}u^2 \chi_t dxdt \leq ||\chi_t||_{\infty}\dint_{\tau+\varepsilon/2}^{T-\varepsilon}\dint_{\R^n}u^2dxdt.
\]

Evaluating at $t=T-\varepsilon$ and using \cref{ineq:Carleman inequality 3} we have 
\begin{equation}\label{ineq:Carleman inequality 4}
\begin{array}{rcl}
\dint_{\tau+\varepsilon/2}^{T-\varepsilon}\dint_{\R^n}|\nabla u|^2\chi dxdt & \leq & ||\chi_t||_{\infty}\dint_{\tau+\varepsilon/2}^{T-\varepsilon}\dint_{\R^n}u^2 dxdt \\

& \leq & \dfrac{C ||\chi_t||_{\infty}}{\lambda_1\bar{C}(\varepsilon)}||u||^2_{L^2(\tau,T;L^2(\omega))}.
\end{array}
\end{equation}

Since $\chi$ is increasing in $(\tau+\varepsilon,T-\varepsilon)$ the left-hand side leads
\begin{equation}\label{ineq:Carleman inequality 5}
\begin{array}{rcl}
\dint_{\tau+\varepsilon/2}^{T-\varepsilon}\dint_{\R^n}|\nabla u|^2\chi dxdt &\geq& \dint_{\tau+\varepsilon}^{T-\varepsilon}\dint_{\R^n}|\nabla u|^2\chi dxdt \\ & \geq & \chi(\tau+\varepsilon)\dint_{\tau+\varepsilon}^{T-\varepsilon}\dint_{\R^n}|\nabla u|^2 dxdt. 
\end{array}
\end{equation}

Bringing \cref{ineq:Carleman inequality 4} and \cref{ineq:Carleman inequality 5} together we get 
\begin{equation}\label{ineq:Observability grad u}
\dint_{\tau+\varepsilon}^{T-\varepsilon}\dint_{\R^n}|\nabla u|^2 dxdt \leq \dfrac{C ||\chi_t||_{\infty}}{\bar{C}(\varepsilon)\chi(\tau+\varepsilon)} ||u||^2_{L^2(\tau,T;L^2(\omega))}.
\end{equation}

Hence, with \cref{ineq:Observability u} and \cref{ineq:Observability grad u} we conclude:  
\[||u||^2_{L^2(\tau+\varepsilon,T-\varepsilon;H^1(\R^n))}\leq C_5||u||^2_{L^2(\omega\times(\tau,T))},\]
where 
\[C_5= \text{exp}\left(\dfrac{2\hat{s}K}{\varepsilon(T-\tau-\varepsilon)}\right) \text{max}\left\{\dfrac{C}{\lambda_1\varepsilon(T-\tau-\varepsilon)},\dfrac{C}{\lambda_1\varepsilon(T-\tau-\varepsilon/2)}\dfrac{||\chi_t||_{\infty}}{\chi(\tau+\varepsilon)}\right\}.\]

\item Estimation of $||u_t||^2_{L^2(\tau+\varepsilon,T-\varepsilon;H^{-1}(\R^n))}$. Multiplying \cref{eq:Heat Equation} by $v\in H^1(\R^n)$ it follows that
\[\dint_{\R^n}u_tv dx=-\dint_{\R^n}\nabla u \cdot\nabla v dx.\]

Integrating over $(\tau+\varepsilon,T-\varepsilon)$ and using the estimate obtained before we conclude
\[||u_t||^2_{L^2(\tau+\varepsilon,T-\varepsilon;H^{-1}(\R^n))}\leq ||u||^2_{L^2(\tau+\varepsilon,T-\varepsilon;H^1(\R^n))}\leq C_5 ||u||^2_{L^2(\omega\times(\tau,T))}.\]

\end{enumerate}

\end{proof}

In the next theorem we assume that the initial condition belongs to the admissible set $\mathcal{A}_{\beta,M}$ defined in \cref{sec:intro}.

\begin{theorem}\label{th:Holder}
Let $0\leq \tau<T$ and $\omega\subseteq\R^n$ be such that $\R^n\backslash\omega$ is compact. Let $u$ be a solution of \cref{eq:Heat Equation} with initial condition $u_0\in \mathcal{A}_{\beta,M}$. Then, for every $\alpha>0$ and $0<\varepsilon<(T-\tau)/2$ there exists a positive constant $C_6=C_6(\alpha,\varepsilon,\tau,T,\omega)$ such that
\[||u||_{C([\tau+\varepsilon,T-\varepsilon];L^2(\R^n))}\leq C_6||u||^{\frac{2\alpha}{2\alpha+1}}_{L^2(\omega\times(\tau,T))}.\]

\end{theorem}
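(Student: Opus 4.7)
The proof rests on the Sobolev interpolation
\[\|u(\cdot,t)\|_{L^2(\R^n)} \leq \|u(\cdot,t)\|_{H^{2\alpha}(\R^n)}^{1/(2\alpha+1)}\, \|u(\cdot,t)\|_{H^{-1}(\R^n)}^{2\alpha/(2\alpha+1)},\]
a standard consequence of Hölder's inequality on the Fourier side with exponents $p=2\alpha+1$ and $q=(2\alpha+1)/(2\alpha)$ applied to the identity $0 = (1-\theta)\cdot 2\alpha + \theta\cdot(-1)$. Crucially, the exponent $\tfrac{2\alpha}{2\alpha+1}$ it produces coincides exactly with the target exponent, so the whole task reduces to estimating each factor uniformly in $t \in I := [\tau+\varepsilon, T-\varepsilon]$: the $H^{2\alpha}$-factor using the admissible-set hypothesis $u_0 \in \mathcal{A}_{\beta,M}$, and the $H^{-1}$-factor using \cref{th:Energy}.

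For the $H^{2\alpha}$-factor, I would use the Fourier representation of the heat flow to write
\[\|u(t)\|_{H^{2\alpha}(\R^n)}^2 = \int_{\R^n}(1+|\xi|^2)^{2\alpha}\,e^{-2|\xi|^2 t}\,|\hat u_0(\xi)|^2\,d\xi.\]
For $\alpha \leq \beta$, this is trivially bounded by $\|u_0\|_{H^{2\alpha}}^2 \leq \|u_0\|_{H^{2\beta}}^2 \leq M^2$ via the Sobolev embedding $H^{2\beta}\hookrightarrow H^{2\alpha}$. For $\alpha > \beta$, the weight $(1+|\xi|^2)^{2\alpha}e^{-2|\xi|^2(\tau+\varepsilon)}$ is uniformly bounded in $\xi$ by a constant $C(\alpha,\beta,\varepsilon)$ (this is the parabolic smoothing effect on the interior strip $\{t\geq\tau+\varepsilon\}$), so $\|u(t)\|_{H^{2\alpha}} \leq C(\alpha,\beta,\varepsilon)\,M$ in either regime.

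For the $H^{-1}$-factor, the same Fourier formula shows that $t \mapsto \|u(t)\|_{H^{-1}}$ is monotonically non-increasing, so its supremum on $I$ is attained at $t = \tau+\varepsilon$. The corresponding energy identity
\[\|u(\tau+\varepsilon)\|_{H^{-1}}^2 - \|u(T-\varepsilon)\|_{H^{-1}}^2 = 2\int_{\tau+\varepsilon}^{T-\varepsilon}\!\!\int_{\R^n}\frac{|\xi|^2}{1+|\xi|^2}|\hat u(\xi,t)|^2\,d\xi\,dt \leq 2\|u\|_{L^2(I; L^2(\R^n))}^2,\]
together with the pigeonhole consequence of monotonicity $\|u(T-\varepsilon)\|_{H^{-1}} \leq (T-\tau-2\varepsilon)^{-1/2}\|u\|_{L^2(I; H^{-1})} \leq C\|u\|_{L^2(I; L^2(\R^n))}$, yields $\sup_{t\in I}\|u(t)\|_{H^{-1}} \leq C\|u\|_{L^2(I; L^2(\R^n))} \leq C\, C_5\,\|u\|_{L^2(\omega\times(\tau,T))}$ after invoking \cref{th:Energy}. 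Plugging both bounds into the interpolation inequality delivers the statement.

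The main obstacle is the case $\alpha > \beta$: here the admissible-set hypothesis no longer controls $\|u_0\|_{H^{2\alpha}}$ directly, so one must extract the needed regularity from the parabolic smoothing of the heat semigroup on the strip $\{t \geq \tau+\varepsilon\}$. This is precisely the mechanism responsible for the expected blow-up of the constant $C_6$ as $\varepsilon \to 0$ when $\tau = 0$, and is consistent with the dependence structure $C_6 = C_6(\alpha,\varepsilon,\tau,T,\omega)$ stated in the theorem.
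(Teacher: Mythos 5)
Your argument is correct and follows essentially the same route as the paper: interpolate $L^2$ between $H^{-1}$ and $H^{2\alpha}$ with exponent $\tfrac{2\alpha}{2\alpha+1}$, absorb the $H^{2\alpha}$ factor into the constant using $\|u_0\|_{L^2(\R^n)}\le M$ together with the parabolic smoothing of $e^{t\Delta}$ for $t\ge\tau+\varepsilon$, and control the $H^{-1}$ factor through \cref{th:Energy}. The only (harmless) deviation is in how the supremum in time of the $H^{-1}$ norm is extracted: the paper uses the $\|u_t\|_{L^2(\tau+\varepsilon,T-\varepsilon;H^{-1}(\R^n))}$ bound of \cref{th:Energy} plus the temporal Sobolev embedding $H^{1}\hookrightarrow C$, whereas you exploit the monotonicity of $t\mapsto\|u(\cdot,t)\|_{H^{-1}(\R^n)}$ and an energy identity, so that only the $L^2$-in-time part of that estimate is needed.
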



\begin{remark}
The main consequence of this theorem is a Hölder estimate of the solution $u$ at any time $\tau+\varepsilon\leq t \leq T-\varepsilon$:
\[||u(\cdot,t)||_{L^2(\R^n)}\leq C_6||u||^{\frac{2\alpha}{2\alpha+1}}_{L^2(\omega\times (\tau,T))}.\]
\end{remark}

\begin{proof}

From \cref{th:Energy} there exists a constant $C_5>0$ such that
\[||u||_{H^1(\tau+\varepsilon,T-\varepsilon;H^{-1}(\R^n))}\leq C_5||u||_{L^2(\omega \times (\tau,T))},
\]
and using the Sobolev embedding (see theorem 4.12 in \cite{adams2003sobolev}) we conclude that
\begin{equation}\label{ineq:Sobolev interpolation 1}
||u||_{C([\tau+\varepsilon,T-\varepsilon];H^{-1}(\R^n))}\leq C_5||u||_{L^2(\omega\times (\tau,T))}.
\end{equation}

We now estimate $||u||_{C([\tau+\varepsilon,T-\varepsilon];H^{2\alpha}(\R^n))}$ for some given $\alpha>0$ and conclude the result by interpolation of Sobolev spaces.

Recall that for $a \in H^{2\alpha}(\R^n)$ we have (see e.g. \cite{nezza2011hitchhikers}, proposition 3.4)
\begin{equation}\label{ineq:Laplacian inequality}
\begin{array}{rcl}
||a||_{H^{2\alpha}(\R^n)}^2 &=& ||a||_{L^2(\R^n)}^2+|a|^2_{H^{2\alpha}(\R^n)}\\
&\leq& c(||a||_{L^2(\R^n)}^2+||(-\Delta)^{\alpha}a||^2_{L^2(\R^n)}), 
\end{array} 
\end{equation}
where $|\cdot|_{H^{\alpha}(\R^n)}$ is the seminorm of Gagliardo for fractional Sobolev spaces and $c=c(n,\alpha)$ is a positive constant.

Recall also that $u(\cdot,t) = e^{t\Delta}u_0$, where $e^{t\Delta}$ corresponds to the heat semigroup. Let us estimate the term $||(-\Delta)^{\alpha}u||_{L^2(\R^n)}$ via Fourier. Notice that
\[\mathcal{F}((-\Delta)^{\alpha}e^{t\Delta}u_0) = |\xi|^{2\alpha}e^{-t|\xi|^2}\hat{u}_0.\]
The function $r\in[0,\infty)\to r^{2\alpha}e^{-tr^2}$ reaches its maximum at $\overline{r}=\sqrt{\frac{\alpha}{t}}$ with value $\dfrac{\alpha^{\alpha}e^{-\alpha}}{t^{\alpha}}$, so we conclude that
\begin{equation}\label{ineq:Semigroup}||(-\Delta)^{\alpha}e^{t\Delta}u_0||_{L^2(\R^n)} = |||\xi|^{2\alpha}e^{-t|\xi|^{2}}\hat{u}_0||_{L^2(\R^n)} \leq \dfrac{C(\alpha)}{t^{\alpha}}||u_0||_{L^2(\R^n)}.
\end{equation}
Bringing \cref{ineq:Laplacian inequality} and \cref{ineq:Semigroup} together, and since $u_0\in \mathcal{A}_{\beta,M}$, we get
\begin{equation}\label{ineq:Sobolev interpolation 2}
\begin{array}{rcl}
||u||_{C([\tau+\varepsilon,T-\varepsilon];H^{2\alpha}(\R^n))}&=& \displaystyle\sup_{t\in [\tau+\varepsilon,T-\varepsilon]}||u(\cdot,t)||_{H^{2\alpha}(\R^n)}\\


&\leq & c\displaystyle\sup_{t\in [\tau+\varepsilon,T-\varepsilon]}\left(||u(\cdot,t)||_{L^2(\R^n)}^2+\frac{C^2(\alpha)}{t^{2\alpha}}||u_0||^2_{L^2(\R^n)}\right)^{1/2}\\

&\leq& c\displaystyle\sup_{t\in [\tau+\varepsilon,T-\varepsilon]}\left(||u_0||_{L^2(\R^n)}^2+\frac{C^2(\alpha)}{(\tau+\varepsilon)^{2\alpha}}||u_0||^2_{L^2(\R^n)}\right)^{1/2}\\

&\leq&cM\left(1+\dfrac{C^2(\alpha)}{(\tau+\varepsilon)^{2\alpha}}\right)^{1/2}.
\end{array}
\end{equation}

Finally, we use \cref{ineq:Sobolev interpolation 1} and \cref{ineq:Sobolev interpolation 2}, and conclude via interpolation theory (proposition 2.3 \cite{Lions_Magenes_1972} and section 2.4.1 in \cite{triebel1995interpolation} or theorem 4.1 in \cite{Chandler_Wilde_2014}) taking $s=0, s_0=-1, s_1=2\alpha$ and $\theta = \frac{2\alpha}{2\alpha+1}$ (so that $s = \theta s_0+(1-\theta)s_1$):
\[\begin{array}{rcl}
||u||_{C([\tau+\varepsilon,T-\varepsilon];L^2(\R^n))} &\leq &||u||_{C([\tau+\varepsilon,T-\varepsilon];H^{-1}(\R^n))}^{\frac{2\alpha}{2\alpha+1}}||u||^{\frac{1}{2\alpha+1}}_{C([\tau+\varepsilon,T-\varepsilon];H^{2\alpha}(R^n))}\\

&\leq& \underbrace{\left[c M \left(1+\dfrac{C^2(\alpha)}{(\tau+\varepsilon)^{2\alpha}}\right)^{1/2}\right]^{\frac{1}{2\alpha+1}}C_5^{\frac{2\alpha}{2\alpha+1}}}_{=:C_6}||u||^{\frac{2\alpha}{2\alpha+1}}_{L^2(\omega\times (\tau,T))}.

\end{array}\]

\end{proof}

Let us now derive the conditional logarithmic stability estimate:

\begin{proof}[Proof of \cref{th:Log-stability}]
It suffices to follow steps 2 and 3 in the proof of theorem 2.1 of \cite{Yamamoto}. First of all, the function $t\to||u(\cdot,t)||^2_{L^2(\R^n)}$ is log-convex, then, for $0\leq t\leq \theta$, we note that $t = 0\cdot(1-t/\theta) + \theta\cdot(t/\theta)$ is a convex combination, hence
\[||u(\cdot,t)||^2_{L^2(\R^n)}\leq ||u_0||_{L^2(\R^n)}^{2(1-t/\theta)}||u(\cdot,\theta)||^{2t/\theta}_{L^2(\R^n)}\leq M^{2(1-t/\theta)}||u(\cdot,\theta)||^{2t/\theta}_{L^2(\R^n)}.\] 

Integrating from $0$ to $\theta$ it yields
\[\begin{array}{rcl}
\dint_0^{\theta}||u(\cdot,t)||^2_{L^2(\R^n)}dt&\leq& M^2\dint_0^{\theta}\left(\dfrac{||u(\cdot,\theta)||_{L^2(\R^n)}}{M}\right)^{2t/\theta}dt\\
&=&\theta \left(\dfrac{||u(\cdot,\theta)||^2_{L^2(\R^n)}-M^2}{\log(||u(\cdot,\theta)||^2_{L^2(\R^n)})-\log(M^2)}\right).
\end{array}\]

Due to the logarithm concavity, the right-hand side of the previous estimate is an increasing function with respect to the term $||u(\cdot,\theta)||_{L^2(\R^n)}$, which together with \cref{th:Holder} implies that
\[\dint_0^{\theta}||u(\cdot,t)||^2_{L^2(\R^n)}dt\leq \theta \left(\dfrac{C_6^2||u||_{L^2(\omega \times(\tau,T))}^{\frac{4\alpha}{2\alpha+1}}-M^2}{\log(C^2_6||u||_{L^2(\omega \times(\tau,T))}^{\frac{4\alpha}{2\alpha+1}})-\log(M^2)}\right).\]

Now we have two cases: $C_6\leq M$ or $M\leq C_6$. We study the first case, the second one is analogous. If $C_6\leq M$ then 
\[\dint_0^{\theta}||u(\cdot,t)||^2_{L^2(\R^n)}dt \leq M^2 \theta \left(\dfrac{\frac{C^2_6}{M^2}||u||_{L^2(\omega \times(\tau,T))}^{\frac{4\alpha}{2\alpha+1}}-1}{\log(\frac{C^2_6}{M^2}||u||_{L^2(\omega \times(\tau,T))}^{\frac{4\alpha}{2\alpha+1}})}\right).\]
The right-hand side is increasing as a function of $C_6/M$ and $C_6/M\leq 1$, hence 
\[\dint_0^{\theta}||u(\cdot,t)||^2_{L^2(\R^n)}dt \leq M^2 \theta \left(\dfrac{||u||_{L^2(\omega \times(\tau,T))}^{\frac{4\alpha}{2\alpha+1}}-1}{\log(||u||_{L^2(\omega \times(\tau,T))}^{\frac{4\alpha}{2\alpha+1}})}\right).\]
Since measurements are sufficiently small, \emph{i.e.}, $||u||_{L^2(\omega \times(\tau,T))}<1$, we have that
\[\log||u||^{\frac{4\alpha}{2\alpha+1}}_{L^2(\omega \times(\tau,T))}<0,\]
then
\[\dint_0^{\theta}||u(\cdot,t)||^2_{L^2(\R^n)}dt \leq M^2 \theta \frac{2\alpha+1}{4\alpha}(-\log||u||_{L^2{(\omega\times(\tau,T))}})^{-1}.\]
If $M\leq C_6$, we can follow the same steps obtaining that 
\[\dint_0^{\theta}||u(\cdot,t)||^2_{L^2(\R^n)}dt \leq C^2_6 \theta \frac{2\alpha+1}{4\alpha}(-\log||u||_{L^2{(\omega\times(\tau,T))}})^{-1}\] 
In conclusion, we get the following estimate
\begin{equation}\label{ineq:Step 2}||u||_{L^2(0,\theta; L^2(\R^n))}\leq \displaystyle\max\{C_6,M\}\left(\theta\dfrac{2\alpha+1}{4\alpha}\right)^{1/2}(-\log||u||_{L^2(\omega\times (\tau,T))})^{-1/2}.
\end{equation}

In order to conclude we shall estimate the norms $||u||_{W^{1,p}(0,\theta;L^2(\R^n))}$ and \newline $||u||_{L^{p}(0,\theta;L^2(\R^n))}$ for some $p>1$ and use interpolation of Sobolev spaces and Sobolev embeddings. On one side we have that
\[
u_t(\cdot,t)  = \Delta e^{t\Delta}u_0 = -(-\Delta)^{1-\beta}e^{t\Delta}(-\Delta)^{\beta}u_0 
\]
and thanks to the fractional Laplacian properties (recall \cref{ineq:Semigroup}):
\[||u_t(\cdot,t)||_{L^2(\R^n)} \leq  \dfrac{C(\beta)}{t^{(1-\beta)}}||(-\Delta)^{\beta}u_0||_{L^2(\R^n)}.\]

Let $1<p<1/(1-\beta)$. Using that $u_0\in \mathcal{A}_{\beta,M}$ we get that
\[\begin{array}{rcl}
\dint_0^{\theta}||u_t(\cdot,t)||^p_{L^2(\R^n)}dt & \leq & C(\beta)\dint_0^{\theta}\frac{1}{t^{p(1-\beta)}}dt \hspace{1mm} ||(-\Delta)^{\beta}u_0||^p_{L^2(\R^n)} \\ 
& \leq & C(\beta)\dfrac{\theta^{1-p(1-\beta)}}{1-p(1-\beta)} \hspace{1mm} |u_0|_{H^{2\beta}(\R^n)}^p \\ 
& \leq & C(\beta)\dfrac{\theta^{1-p(1-\beta)}}{1-p(1-\beta)} \hspace{1mm} M^p,
\end{array}\]
that is
\begin{equation}\label{ineq:Sobolev interpolation u_t W1p}||u_t||^p_{L^p(0,\theta;L^2(\R^n))} \leq C(\beta,M,\theta).
\end{equation}

On the other side,
\begin{equation}\label{ineq:Sobolev interpolation u W1p}\dint_0^{\theta}||u(\cdot,t)||^p_{L^2(\R^n)}dt\leq \dint_0^{\theta}||u_0||^p_{L^2(\R^n)}dt \leq M^p\theta.
\end{equation}

Bringing \cref{ineq:Sobolev interpolation u_t W1p} and \cref{ineq:Sobolev interpolation u W1p} together we deduce 
\begin{equation}\label{ineq: Sobolev interpolation 1}
||u||_{W^{1,p}(0,\theta;L^2(\R^n))}\leq C(\beta,M,\theta).
\end{equation}
The previous constant decreases with $\theta$, which means that the stability constant decreases when initial time of observation $\tau$ is closer to $0$. Taking $p\leq 2$, we can use \cref{ineq:Step 2} but with $L^p$ norm in time:
\begin{equation}\label{ineq: Sobolev interoplation 2}
||u||_{L^p(0,\theta;L^2(\R^n))}\leq \theta^{1/p-1/2}||u||_{L^2(0,\theta;L^2(\R^n))}\leq C(-\log||u||_{L^2(\omega\times (\tau,T))})^{-1/2}.
\end{equation}

Again, we interpolate estimates \cref{ineq: Sobolev interpolation 1} and \cref{ineq: Sobolev interoplation 2} so that for $0<s<1$ 
\[||u||_{W^{1-s,p}(0,\theta;L^2(\R^n))}\leq C(-\log||u||_{L^2(\omega\times (\tau,T))})^{-s/2}.\]
Letting $s$ such that $(1-s)p>1$ we can use the Sobolev embedding and conclude with $\kappa = s/2$:\[||u||_{C([0,\theta];L^2(\R^n))}\leq C||u||_{W^{1-s,p}(0,\theta;L^2(\R^n))}\leq C_1 (-\log||u||_{L^2(\omega \times (\tau, T))})^{-s/2}.\]

\end{proof}

\section{Conditional Lipschitz Stability}\label{sec:Lipschitz-stability}

In this section we prove the main results of this paper, \cref{th:Lipschitz stability}, which provides a Lipschitz stability inequality in the recovery of the initial condition when observations are made on some interval $(t_1,t_2)$, with $0<t_1<t_2$, and in an open domain containing the support of the initial condition. \Cref{th:Lipschitz stability unbounded domain} gives a similar conclusion when measurements are made on an unbounded domain that does not necessarily contain the support of the initial condition. This last theorem follows directly from \cref{th:Energy} and \cref{th:Lipschitz stability} and will be used later in section \ref{sec:Curve Stability}. 

To demonstrate \cref{th:Lipschitz stability} let us prove first the following lemma whose main hypothesis is that $u_0\geq0$:

\begin{lemma}\label{lem:u0 Positiva soporte compacto}
If $u_0\in L^1(\R^n)$, $u_0\geq 0$ and $\supp(u_0)\subseteq B:=B(0,R)$, for some $R>0$. Then, for $t>0$ there exists a constant $C_7 = C_7(R,t)>0$ such that

\[||u_0||_{L^1(\R^n)}\leq C_7||u(\cdot,t)||_{L^2(2B)}.\]
\end{lemma}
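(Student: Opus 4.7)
The plan is to exploit the explicit heat-kernel representation of $u$ together with the sign hypothesis $u_0\geq 0$, which makes the convolution integral amenable to a pointwise lower bound.

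First I would write
\[
u(y,t) \;=\; \frac{1}{(4\pi t)^{n/2}}\int_{\R^n} e^{-|y-z|^2/(4t)}\,u_0(z)\,dz \;=\; \frac{1}{(4\pi t)^{n/2}}\int_{B} e^{-|y-z|^2/(4t)}\,u_0(z)\,dz,
\]
using that $\supp(u_0)\subseteq B=B(0,R)$. The key geometric observation is that for $y\in 2B$ and $z\in B$, the triangle inequality yields $|y-z|\leq 3R$, so
\[
e^{-|y-z|^2/(4t)} \;\geq\; e^{-9R^2/(4t)}.
\]
Since $u_0\geq 0$, this exponential bound can be pulled outside the integral, giving the uniform pointwise lower bound
\[
u(y,t) \;\geq\; \frac{e^{-9R^2/(4t)}}{(4\pi t)^{n/2}}\,\|u_0\|_{L^1(\R^n)} \qquad \text{for all } y\in 2B.
\]

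To convert this into the desired $L^2$ estimate on $2B$, I would simply square and integrate over $2B$:
\[
\|u(\cdot,t)\|_{L^2(2B)}^2 \;\geq\; \frac{e^{-9R^2/(2t)}}{(4\pi t)^{n}}\,\|u_0\|_{L^1(\R^n)}^2 \,|B(0,2R)|.
\]
Taking square roots and solving for $\|u_0\|_{L^1(\R^n)}$ yields the lemma with
\[
C_7 \;=\; \frac{(4\pi t)^{n/2}\,e^{9R^2/(4t)}}{|B(0,2R)|^{1/2}},
\]
which depends only on $R$ and $t$ (for fixed dimension $n$).

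There is no real obstacle in this argument; the whole point of assuming $u_0\geq 0$ is precisely to avoid cancellations, so that a pointwise heat-kernel lower bound survives after integration against $u_0$. This also clarifies why the next step of the paper must reduce the general (signed) case of \cref{th:Lipschitz stability} to this lemma, most likely by decomposing $u_0 = u_0^+ - u_0^-$ and carefully controlling the possible cancellations in the observation region $2B\times(t_1,t_2)$ — that signed reduction is where the genuine work of the section will lie, not in the positive lemma itself.
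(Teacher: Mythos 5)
Your proof is correct, but it takes a different route from the paper's. You bound the heat kernel from \emph{below} on $2B\times B$ by the worst-case value $e^{-9R^2/(4t)}/(4\pi t)^{n/2}$, obtaining a uniform pointwise lower bound $u(y,t)\geq c(R,t)\,\|u_0\|_{L^1}$ on $2B$, and then integrate. The paper instead starts from conservation of mass, $\|u_0\|_{L^1(\R^n)}=\int_{\R^n}u(y,t)\,dy$, splits the $y$-integral into $\{|y|<2R\}$ and $\{|y|>2R\}$, controls the first piece by $|2B|^{1/2}\|u(\cdot,t)\|_{L^2(2B)}$ via Cauchy--Schwarz, and bounds the second by $\alpha(R,t)\,\|u_0\|_{L^1}$ with $\alpha<1$ (the Gaussian mass escaping $2B$ from a source in $B$), which is then absorbed into the left-hand side to give $C_7=(1-\alpha)^{-1}|2B|^{1/2}$. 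Both arguments hinge on positivity preventing cancellation and both yield explicit constants, so either suffices for \cref{cor:Lipschitz stability bounded domain}. The main practical difference is the behavior of the constant: your $C_7=(4\pi t)^{n/2}e^{9R^2/(4t)}/|2B|^{1/2}$ blows up exponentially as $t\to 0^+$ (the kernel lower bound on all of $2B\times B$ degenerates there), whereas the paper's constant stays bounded as $t\to 0^+$ since $\alpha\to 0$; conversely both degrade as $t\to\infty$. For the fixed interval $[t_1,t_2]$ with $t_1>0$ used downstream this is immaterial. Your closing speculation about the signed case is on target in that the paper does decompose $u_0=u_0^+-u_0^-$, though the actual mechanism there is a compactness/closed-range argument for the operator $u_0\mapsto u^-$ rather than a direct control of cancellations.
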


\begin{proof}
We recall that $u$ takes the explicit form
\begin{equation}\label{eq:Solution u}
u(y,t) = \dint_{\R^n}u_0(r)\dfrac{e^{-\frac{|y-r|^2}{4t}}}{(4 \pi t)^{n/2}}dr.
\end{equation}

Since $u_0\geq 0$ and the heat kernel integrates $1$ for any $t>0$ we have
\begin{equation}\label{eq:Norm L1 u} 
\begin{array}{rcl}
||u_0||_{L^1(\R^n)} = \dint_{\R^n}u_0(r)dr & = & \dint_{\R^n}\dint_{\R^n}u_0(r) \dfrac{e^{-\frac{|y-r|^2}{4t}}}{(4 \pi t)^{n/2}}dr dy \\

& = & \dint_{|y|<2R}\dint_{\R^n}u_0(r)\dfrac{e^{-\frac{|y-r|^2}{4t}}}{(4 \pi t)^{n/2}}drdy \\
&&+ \dint_{|y|>2R}\dint_{\R^n}u_0(r)\dfrac{e^{-\frac{|y-r|^2}{4t}}}{(4 \pi t)^{n/2}}drdy.

\end{array}
\end{equation}

The first integral on the right hand side is easily bounded by Cauchy-Schwarz and recalling \cref{eq:Solution u}:
\begin{equation}\label{ineq:Left hand side norm u}
\dint_{|y|<2R}\dint_{\R^n}u_0(r)\dfrac{e^{-\frac{|y-r|^2}{4t}}}{(4 \pi t)^{n/2}}dr dy = \dint_{|y|<2R}u(y,t)dy \leq |2B|^{1/2}||u(\cdot,t)||_{L^2(2B)},
\end{equation}
where $|2B|$ denotes the volume of the ball of radius $2R$. For the second integral, due to the support of $u_0$ we notice that 
\[\begin{array}{rcl}
\dint_{|y|>2R}\dint_{\R^n}u_0(r)\dfrac{e^{-\frac{|y-r|^2}{4t}}}{(4 \pi t)^{n/2}}drdy &=&\dint_{|y|>2R}\dint_{|r|<R}u_0(r) \dfrac{e^{-\frac{|y-r|^2}{4t}}}{(4 \pi t)^{n/2}}dr dy \\

&=& \dint_{|r|<R}u_0(r)\left(\dint_{|y|>2R}\dfrac{e^{-\frac{|y-r|^2}{4t}}}{(4 \pi t)^{n/2}}dy \right) dr, 
\end{array}\] 
where the integral inside parenthesis can be bounded uniformly with respect to $r$ by a constant $\alpha(R,t)\in(0,1)$, increasing with respect to $t$. This yields
\begin{equation}\label{ineq:Right hand side norm u}
\dint_{|y|>2R}\dint_{\R^n}u_0(r)\dfrac{e^{-\frac{|y-r|^2}{4t}}}{(4 \pi t)^{n/2}}drdy\leq \alpha\dint_{\R^n}u_0(r)dr.
\end{equation}

Bringing \cref{eq:Norm L1 u}, \cref{ineq:Left hand side norm u} and \cref{ineq:Right hand side norm u} together we deduce the estimate:
\[||u_0||_{L^1(\R^n)}=\dint_{\R^n}u_0(r)dr \leq \underbrace{(1-\alpha)^{-1}C_R}_{=:C_7}||u(\cdot,t)||_{L^2(2B)}.\]
\end{proof}

The constant of the previous lemma can be chosen uniformly with respect to $t$ in a closed interval $[t_1,t_2]$ for $t_1>0$:

\begin{corollary}\label{cor:Lipschitz stability bounded domain}
Let $0<t_1<t_2$. There exists a constant $C_8=C_8(R,t_1,t_2)>0$ such that
\[||u_0||_{L^1(\R)}\leq C_8||u||_{L^2(2B\times(t_1,t_2))}.\]
\end{corollary}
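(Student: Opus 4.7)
The plan is to combine \cref{lem:u0 Positiva soporte compacto} with a straightforward Cauchy--Schwarz/time-integration argument. The hypothesis to carry over is that $u_0 \in L^1(\R^n)$, $u_0 \ge 0$, and $\supp(u_0) \subseteq B$, and the goal is to upgrade the pointwise-in-time estimate $\|u_0\|_{L^1(\R^n)} \le C_7(R,t)\|u(\cdot,t)\|_{L^2(2B)}$ to an estimate by $\|u\|_{L^2(2B\times(t_1,t_2))}$.

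First I would revisit the explicit form of the constant $C_7(R,t)$ produced in the proof of \cref{lem:u0 Positiva soporte compacto}. There, $C_7 = (1-\alpha(R,t))^{-1}|2B|^{1/2}$, where $\alpha(R,t)\in(0,1)$ is the uniform (in $r\in B$) bound
\[
\alpha(R,t) = \sup_{|r|<R}\int_{|y|>2R}\frac{e^{-|y-r|^2/(4t)}}{(4\pi t)^{n/2}}\,dy.
\]
A change of variable $y \mapsto y-r$ shows that this supremum is attained as $r$ approaches $\partial B$ and that $\alpha(R,\cdot)$ is continuous and strictly increasing on $(0,\infty)$ with $\alpha(R,t)\to 0$ as $t\to 0^+$. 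In particular $\alpha(R,t_2)<1$, so
\[
\sup_{t\in[t_1,t_2]} C_7(R,t) \le (1-\alpha(R,t_2))^{-1}|2B|^{1/2} =: \widetilde{C}(R,t_2).
\]
This gives a uniform estimate valid for every $t\in[t_1,t_2]$:
\[
\|u_0\|_{L^1(\R^n)} \le \widetilde{C}(R,t_2)\,\|u(\cdot,t)\|_{L^2(2B)}.
\]

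Next I would square this inequality, integrate over $t\in(t_1,t_2)$, and take a square root. Since the left-hand side is independent of $t$,
\[
(t_2-t_1)\,\|u_0\|_{L^1(\R^n)}^{2}
\le \widetilde{C}(R,t_2)^{2}\int_{t_1}^{t_2}\|u(\cdot,t)\|_{L^2(2B)}^{2}\,dt
= \widetilde{C}(R,t_2)^{2}\,\|u\|_{L^2(2B\times(t_1,t_2))}^{2},
\]
which yields the claim with
\[
C_8(R,t_1,t_2) := \widetilde{C}(R,t_2)\,(t_2-t_1)^{-1/2}.
\]

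There is no real obstacle here; the only mildly delicate point is the uniform control of $\alpha(R,t)$ on a compact time interval bounded away from $0$, which is needed so that the constant does not blow up. The blow-up of $C_8$ as $t_1\to 0^+$ is harmless (and unavoidable), while the blow-up as $t_2\to\infty$ reflects the expected dissipation of mass of $u(\cdot,t)$ outside the fixed observation window $2B$.
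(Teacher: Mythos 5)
Your proof is correct and follows exactly the route the paper intends: the paper gives no separate argument for this corollary beyond the remark that the constant $C_7(R,t)$ of \cref{lem:u0 Positiva soporte compacto} can be chosen uniformly for $t\in[t_1,t_2]$, and you make precisely this uniformity explicit (via the monotone bound $\alpha(R,t)\le\alpha(R,t_2)<1$) before squaring and integrating in time. The only slip is in your closing aside: with your formula $C_8=\widetilde{C}(R,t_2)(t_2-t_1)^{-1/2}$ nothing blows up as $t_1\to 0^+$ (consistent with the remark after \cref{th:Energy} that the degeneration at small times comes from the Carleman step, not from this lemma), but this does not affect the proof.
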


%
%
%
%
%
%
%
%
\noindent What remains to be done is to get rid of the positiveness of $u_0$:

\begin{proof}[Proof of \cref{th:Lipschitz stability}]
Let $u^{\pm}$ be the solution to \cref{eq:Heat Equation} with $u_0^{\pm}=\displaystyle\max\{\pm u_0,0\}$ as initial condition respectively. Noticing that $u_0^{\pm}\geq 0$ and $u^{\pm}\geq 0$, \Cref{cor:Lipschitz stability bounded domain} tells us that there exists a constant $C_8>0$ such that
\[||u_0^{\pm}||_{L^1(\R^n)}\leq C_8||u^{\pm}||_{L^2(2B\times(t_1,t_2))}.\]

Since $u=u^+-u^-$, then 
\begin{equation}\label{ineq:Rango cerrado}
\begin{array}{rcl}
||u_0||_{L^1(\R^n)}&=&||u_0^+||_{L^1(\R^n)}+||u_0^-||_{L^1(\R^n)} \\ 
&\leq& C_8 \left(||u||_{L^2(2B\times(t_1,t_2))}+||u^-||_{L^2(2B\times(t_1,t_2))}\right).
\end{array}
\end{equation}

Let us analyze the two following operators:
\[\Lambda: u_0 \in L^1(B) \to u \in L^2(2B\times(t_1,t_2)) \]
\[\Upsilon: u_0 \in L^1(B) \to u^- \in L^2(2B\times(t_1,t_2)), \]
and prove that $\Lambda$ is a bounded and injective linear operator and $\Upsilon$ is a compact operator. 

In effect, we use Young's inequality with $p=1, q=2$ and $r=2$ (so that $\frac{1}{p}+\frac{1}{q}=\frac{1}{r}+1$) to obtain

\[\begin{array}{rcl}
||u^-(\cdot,t)||_{L^2(2B)}&\leq&||u_0^-||_{L^1(\R^n)}\dfrac{1}{(4\pi t)^{n/2}}||e^{-|y|^2/4t}||_{L^2(\R^n)}\\

&\leq&||u_0^-||_{L^1(\R^n)}\dfrac{1}{(4\pi t)^{n/4}}\left(\dint_{\R^n}\dfrac{1}{(4\pi t)^{n/2}}e^{-|y|^2/4t}dy\right)^{1/2} \\
&\leq&||u_0^-||_{L^1(\R^n)}\dfrac{1}{(4\pi t)^{n/4}},
\end{array}\]
where in the second step we used that $e^{-a/2t}\leq e^{-a/4t}$ for $a>0$. From here we conclude that
\[||u^-||^2_{L^2(2B\times(t_1,t_2))} \leq ||u_0^-||^2_{L^1(\R^n)}\dfrac{1}{(4\pi)^{n/2}}\left\{\begin{array}{ll}\log(t_2/t_1),& \text{ if } n=2\\ \dfrac{1}{n/2-1}\left(\dfrac{t_1}{t_1^{n/2}}-\dfrac{t_2}{t_2^{n/2}}\right),& \text{ if } n\neq 2.\end{array}\right.\]

Hence, there exists a constant $C>0$ such that
\begin{equation}\label{ineq: Upsilon continuo}
||u^-||_{L^2(2B\times(t_1,t_2))}\leq C||u_0^-||_{L^1(\R^n)}\leq C||u_0||_{L^1(B)},
\end{equation}
and analogously, we have
\[||u^+||_{L^2(2B\times(t_1,t_2))}\leq  C||u_0||_{L^1(B)}.\]  

Since $u=u^+-u^-$, $\Lambda$ turns out to be a bounded operator:
\[||\Lambda u_0||_{L^2(2B\times(t_1,t_2))}=||u||_{L^2(2B\times(t_1,t_2))}\leq C||u_0||_{L^1(B)}.\] 

Let us verify the compactness of $\Upsilon$. For this purpose we consider $\Upsilon$ as the composition of two operators $\Upsilon=\Upsilon_2 \circ \Upsilon_1$ where 
\[\Upsilon_1: u_0\in L^1(B)\to u^- \in L^2(t_1,t_2;H^1(2B))\]
\[\Upsilon_2: u^-\in L^2(t_1,t_2;H^1(2B)) \to u^- \in L^2(2B\times(t_1,t_2)).\]

We claim that $\Upsilon_1$ is a bounded linear operator while $\Upsilon_2$ is compact. In fact, thanks to \cref{ineq: Upsilon continuo} it suffices to estimate the derivatives in order to conclude the boundedness of $\Upsilon_1$:
%
%
%
%
%
%
\[\nabla u^-(y,t) = \left(u_0^-(\cdot) * \nabla \dfrac{e^{-|\cdot|^2/4t}}{(4\pi t)^{n/2}}\right)(y) = \left(u_0^-(\cdot) * -\dfrac{\cdot}{2t}\dfrac{1}{(4\pi t)^{n/2}}e^{-|\cdot|^2/4t}\right)(y). \]

To estimate $||\nabla u^-(\cdot,t)||_{(L^2(2B))^n}$ we use Young's inequality with $p, q$ and $r$ as before getting
\[\begin{array}{rcl}

||\nabla u^-(\cdot,t)||_{(L^2(2B))^n} & \leq & ||u_0^-||_{L^1(\R^n)} \dfrac{1}{2(4\pi)^{n/2}t^{n/2+1}}||ye^{-|y|^2/4t}||_{(L^2(\R^n))^n}\\

&=&\dfrac{C}{t^{n/2+1}} ||u_0^-||_{L^1(\R^n)} \left(\dint_{\R^n}|y|^2e^{-|y|^2/2t}dy\right)^{1/2}\\

&=&\dfrac{C}{t^{n/2+1}} ||u_0^-||_{L^1(\R^n)} \left(\dint_0^\infty r^{n+1}e^{-r^2/2t}dr\right)^{1/2}
,
\end{array}\]
where we have used spherical coordinates. Notice that 
\[\dint_0^\infty r^{n+1}e^{-r^2/2t}dr = C(n) t^{n/2+1},\]
hence, 
\[||\nabla u^-(\cdot,t)||_{(L^2(2B))^n}  \leq  C||u_0^-||_{L^1(\R^n)}
\dfrac{1}{t^{n/4+1/2}}. \]
Integrating in time from $t_1$ to $t_2$ we get 
\[||\nabla u^- ||^2_{L^2(2B\times(t_1,t_2))}\leq C||u_0^-||^2_{L^1(\R^n)}\left(\dfrac{1}{t_1^{n/2}}-\dfrac{1}{t_2^{n/2}}\right).\]
Thus we have estimated the spatial derivative
\[||\nabla u^-||_{L^2(2B\times(t_1,t_2))} \leq C ||u_0^-||_{L^1(\R^n)}\leq C ||u_0||_{L^1(B)}.\]  

%

In conclusion $\Upsilon_1$ is bounded and thanks to Rellich-Kondrachov theorem $\Upsilon_2$ is compact (see for instance theorem 6.3 in \cite{adams2003sobolev}). Consequently, $\Upsilon$ is a compact operator and from \cref{ineq:Rango cerrado} and proposition 6.7 in \cite{Taylor2011} we conclude that $\Lambda$ is a closed operator. Finally, strong unique continuation property of the heat equation implies the injectivity of $\Lambda$, thus, the open mapping theorem gives us the existence of a constant $C>0$ such that
\[||u_0||_{L^1(\R^n)}\leq C ||\Lambda u_0||_{L^2(2B\times(t_1,t_2))}=C||u||_{L^2(2B\times(t_1,t_2))}.\]
\end{proof}

%
%

We finish this section by demonstrating \cref{th:Lipschitz stability unbounded domain}:

\begin{proof}[Proof of \cref{th:Lipschitz stability unbounded domain}]
Let $t_1 = \tau+\varepsilon$ and $t_2 = T-\varepsilon$. From \cref{th:Lipschitz stability} there exists a constant $C_3>0$ such that
\[||u_0||_{L^1(\R^n)}\leq C_3||u||_{L^2(2B\times(\tau+\varepsilon,T-\varepsilon))}\leq C_3||u||_{L^2(\R^n\times(\tau+\varepsilon,T-\varepsilon))}.\]

From \cref{th:Energy} we know that there exists a constant $C=C(\varepsilon)$ such that
\[||u||_{L^2(\R^n\times (\tau+\varepsilon,T-\varepsilon))}\leq C||u||_{L^2(\omega\times(\tau,T))}\]
which concludes the proof.
\end{proof}

\begin{remark}\label{remark:C_epsilon}
The constant $C=C(\varepsilon)$ in the above inequality comes from \cref{th:Energy} and is equal to (see \cref{item:Norm L2 wrt measurements} in the proof of \cref{th:Energy})

\[C(\varepsilon) = \text{\normalfont exp}\left(\dfrac{\hat{s}K}{\varepsilon(T-\tau-\varepsilon)}\right)\dfrac{C}{\varepsilon(T-\tau-\varepsilon)}.\]

For instance, we can take $\varepsilon = (T-\tau)/4$ obtaining a constant for \cref{th:Lipschitz stability unbounded domain} of the form

\[C_2 = \text{\normalfont exp}\left(\dfrac{\hat{s}K}{(T-\tau)^2}\right)\dfrac{C_3}{(T-\tau)^2}.\] 
\end{remark}

\section{Reconstruction of the initial conditions from measurements made on a curve}\label{sec:Curve Stability}

Another problem we are interested in is a stability result for the reconstruction of the compactly supported initial temperature $u_0$ for the heat equation \cref{eq:Heat Equation} with $n=1$, from observations made on a curve contained in $\R\times [0,\infty)$ and satisfying certain properties, a problem that arises naturally from the LSFM model that shall be explained in section \cref{sec:LSFM-stability}. In this section we shall prove \cref{th:Curve stability}.

The curve where observations are available is constructed as the graph of a positive function $\sigma:\R\to\R_+$ satisfying the $\sigma-$\emph{properties} that we recall (see \cref{fig:Curve stability} as a reference):

\begin{enumerate}[i)]
\item\label{item:Regularity of sigma} $\sigma \in C^1(\R)$,
\item\label{item:Support of sigma} $\sigma>0$ for $y\in (a_1,a_2)$ and $\sigma(y)\equiv 0 $ for $y\in (a_1,a_2)^c$, for some $a_1<a_2$,
\item\label{item:Increasing and decreasing} there exists $\xi_1,\xi_2>0$ such that $\sigma'>0$ in $(a_1,a_1+\xi_1]$, $\sigma'<0$ in $[a_2-\xi_2,a_2)$ and $\sigma(a_1+\xi_1)=\sigma(a_2-\xi_2)$,
\item\label{item:Order sigma} $\dfrac{1}{\sigma'(y)} = \mathcal{O}\left(\exp\left(\dfrac{1}{\sigma(y)}\right)\right)$ as $y$ goes to $a_1^+,a_2^-$.
\end{enumerate}

\begin{figure}[h]
\centering
\includegraphics[width=\textwidth]{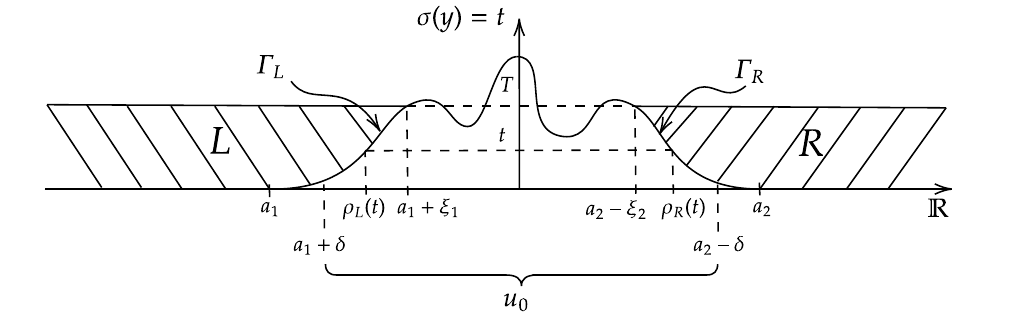}
\caption{Representation of $\sigma$-properties and the relation of $\supp(\sigma)$ with the initial condition $u_0$ for \cref{th:Curve stability}.}
\label{fig:Curve stability}
\end{figure}

Defining $T := \sigma(a_1+\xi_1)$ and as a consequence of conditions i)-iii), we can define the function $\rho_L(t):=\sigma^{-1}(t) \in C^1(0,T)\cap C[0,T]$, the inverse of $\sigma$ to the right of $a_1$, by restricting $\sigma$ to the interval $[a_1,a_1+\xi_1]$. Thus, we can parameterize the curve $\Gamma_L$ as $\{(\rho_L(t),t):0\leq t\leq T\}$. Analogously, since $\sigma$ is strictly decreasing in $[a_2-\xi_2,a_2)$, we define $\rho_R(t):=\sigma^{-1}(t) \in C^1(0,T)\cap C[0,T]$ the inverse of $\sigma$ to the left of $a_2$, then we parameterize $\Gamma_R$ as $\{(\rho_R(t),t):0\leq t\leq T\}$. The sketch of the proof is as follows: we define the set $\omega:=[a_1,a_2]^c$ as the observation region and consider the time interval of observation as $(0,T)$. From \cref{th:Lipschitz stability unbounded domain}, we are able to estimate $u_0$ with respect to the energy of $u$ in $\omega \times (0,T)$. Certainly, the energy there is less than the energy up to the curves $\Gamma_L$ and $\Gamma_R$, corresponding to the regions $L$ and $R$ in \cref{fig:Curve stability}. Consequently, to conclude \cref{th:Curve stability} we need to estimate the energy in the region $L$ with respect to the observations made on the curve $\Gamma_L$ and do the same for the region $R$. This is exactly what \cref{th:Curve estimate} establishes:

\begin{theorem}\label{th:Curve estimate}
Let $u$ be a solution of \cref{eq:Heat Equation} with $n=1$ and $u_0$ be the initial condition. Consider $\Gamma_L$ the curve constructed from the function $\sigma$ satisfying properties. If $u_0\in L^{1}(\R)$ with $\supp(u_0)\subset(a_1+\delta,a_2-\delta)$ then there exists a constant $C_9=C_9(\sigma,\delta)>0$ such that
\[\dfrac{1}{2}\dint_{0}^{T}\dint_{-\infty}^{\rho_L(\tau)}|u(y,\tau)|^2dyd\tau\leq C_9T||u_0||_{L^{1}(\R)}||u||_{L^1(\Gamma_L)}.\]
\end{theorem}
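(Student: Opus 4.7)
The plan is an energy method on the moving-boundary spacetime region
\[
\Omega_L^{(\tau)}:=\{(y,t)\in\R\times(0,\tau):\ y<\rho_L(t)\},\qquad \tau\in(0,T],
\]
coupled with pointwise estimates of the resulting boundary integrals along $\Gamma_L$, using the explicit representation $u(y,t)=\int_{\R}u_0(r)K(y-r,t)\,dr$ with $K(z,t)=e^{-z^2/(4t)}/\sqrt{4\pi t}$.

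First, I would multiply $u_t-u_{yy}=0$ by $u$ and integrate over $\Omega_L^{(\tau)}$. Leibniz's rule on the moving upper boundary $y=\rho_L(t)$ handles the time integral, while integration by parts in $y$ handles the spatial term; the contributions at $y\to-\infty$ vanish because $u$ and $u_y$ decay rapidly there (a consequence of $u_0$ being compactly supported), and the contribution at $t=0$ vanishes since $u_0\equiv 0$ on $(-\infty,a_1)$ by the support hypothesis. This yields the energy identity
\[
\tfrac12\int_{-\infty}^{\rho_L(\tau)} u^2(y,\tau)\,dy + \iint_{\Omega_L^{(\tau)}} u_y^2\,dy\,dt = \int_0^{\tau}(u u_y)(\rho_L(t),t)\,dt + \tfrac12\int_0^{\tau} u^2(\rho_L(t),t)\rho_L'(t)\,dt.
\]
Integrating this equality in $\tau$ from $0$ to $T$, discarding the nonnegative Dirichlet term on the left, and using the Fubini-type identity $\int_0^T\!\int_0^{\tau}f\,dt\,d\tau=\int_0^T(T-t)f(t)\,dt\le T\int_0^T f(t)\,dt$, the left-hand side becomes exactly the target quantity, while the right-hand side is $T$ times a sum of two boundary integrals along $\Gamma_L$.

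Next, I would use the kernel representation to establish the uniform pointwise bounds
\[
|u(\rho_L(t),t)|\le M(\sigma,\delta)\|u_0\|_{L^1(\R)}, \qquad |u_y(\rho_L(t),t)|\le M(\sigma,\delta)\|u_0\|_{L^1(\R)}, \qquad t\in(0,T].
\]
The argument splits according to the separation distance $d_*(t):=\mathrm{dist}(\rho_L(t),\supp u_0)$. When $d_*(t)>0$ (automatic for small $t$, since $\rho_L(t)\to a_1$ while $\supp u_0\subset(a_1+\delta,a_2-\delta)$), the Gaussian factor $e^{-d_*^2/(4t)}$ inside $|K(\rho_L(t)-r,t)|$ and $|K_y(\rho_L(t)-r,t)|$ dominates the polynomial blow-ups $t^{-1/2}$ and $t^{-3/2}$ of the kernels as $t\to 0^+$. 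When $d_*(t)=0$ (possible only if $\xi_1>\delta$, and then only for $t\ge\sigma(a_1+\delta)>0$), the time $t$ is bounded below, so the raw kernel sup-bounds are themselves uniform.

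Finally, in each boundary integral on $\Gamma_L$ I keep one factor of $u$ intact and replace the companion factor ($u$ or $u_y$) by the uniform bound $M\|u_0\|_{L^1(\R)}$. Using $1\le\sqrt{1+\rho_L'^2}$, $\rho_L'\le\sqrt{1+\rho_L'^2}$, and the identity $\|u\|_{L^1(\Gamma_L)}=\int_0^T|u(\rho_L(t),t)|\sqrt{1+\rho_L'(t)^2}\,dt$, each of the two integrals is bounded by $2M\|u_0\|_{L^1(\R)}\|u\|_{L^1(\Gamma_L)}$, yielding the theorem with $C_9=C_9(\sigma,\delta)$. The main obstacle is the uniform pointwise bound on $u_y$ along $\Gamma_L$: the naive kernel estimate gives only $|u_y|\lesssim\|u_0\|_{L^1(\R)}/t$, divergent as $t\to 0^+$, so the positive separation $d_*$ between $\Gamma_L$ and $\supp u_0$ near $t=0$ must be used decisively to absorb this singularity.
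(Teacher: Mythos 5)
Your proposal follows essentially the same strategy as the paper: an energy identity for the region to the left of the moving boundary $y=\rho_L(t)$ (the paper writes it in differential form, $\frac{d}{dt}I_L(t)\le \frac12 u^2\rho_L' + uu_y$ at $(\rho_L(t),t)$, then integrates twice with Fubini, exactly as you do), combined with pointwise kernel bounds along $\Gamma_L$ that exploit the gap of width $\ge\delta/2$ between $\rho_L(t)$ and $\supp u_0$ for small $t$ to tame the $t^{-1/2}$ and $t^{-3/2}$ singularities of $K$ and $K_y$. Your bounds $|u|,|u_y|\le M(\sigma,\delta)\|u_0\|_{L^1}$ on $\Gamma_L$ and the dichotomy in $d_*(t)$ match the paper's items I) and II). The one substantive divergence is the treatment of the term $\tfrac12\int_0^\tau u^2(\rho_L(t),t)\rho_L'(t)\,dt$, where $\rho_L'(t)=1/\sigma'(\rho_L(t))\to\infty$ as $t\to0^+$. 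The paper cancels this blow-up pointwise against the Gaussian smallness of $u(\rho_L(t),t)$, proving $|u(\rho_L(t),t)\rho_L'(t)|\le C\|u_0\|_{L^1}$ — this is the only place $\sigma$-property iv) is used — and thereby lands on the norm $\int_0^T|u(\rho_L(t),t)|\,dt$. You instead fold $\rho_L'$ into the arc-length element via $\rho_L'\le\sqrt{1+\rho_L'^2}$, never invoking property iv), and land on the arc-length norm $\int_0^T|u(\rho_L(t),t)|\sqrt{1+\rho_L'(t)^2}\,dt$. This is a clean shortcut, but note that the arc-length norm dominates the paper's $dt$-norm (the quotient $\sqrt{1+\rho_L'^2}$ is unbounded near $t=0$), so your conclusion is formally weaker if $\|u\|_{L^1(\Gamma_L)}$ is read as the paper reads it in this proof; it is the exact statement if the norm is read with respect to arc length, and it still suffices for the downstream application in \cref{th:LSFM stability}, where the norm is taken in the $y$-variable and $\sqrt{1+\rho_L'^2}\le(1+\max\sigma')\,\rho_L'$ converts one into the other with a $\sigma$-dependent constant. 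You should state explicitly which measure on $\Gamma_L$ you are using and, if you want the $dt$-normalization, you will need the paper's pointwise cancellation (and hence property iv)) after all.
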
 

\begin{proof}
We define the left sided exterior energy as
\[I_L(t) := \dfrac{1}{2}\dint_{-\infty}^{\rho_L(t)}|u(y,t)|^2dy,  \quad t\in [0,T),\]
and we differentiate it in order to get
\[\begin{array}{rcl} 

\dfrac{dI_L}{dt}(t)& = & \dfrac{1}{2}u^2(\rho_L(t),t)\rho_L'(t)+\dint_{-\infty}^{\rho_L(t)}u(y,t)u_t(y,t)\\

& = & \dfrac{1}{2}u^2(\rho_L(t),t)\rho_L'(t)+\dint_{-\infty}^{\rho_L(t)}u(y,t)u_{yy}(y,t)\\

&=& \dfrac{1}{2}u^2(\rho_L(t),t)\rho_L'(t)+u(\rho_L(t),t)u_y(\rho_L(t),t)-\dint_{-\infty}^{\rho_L(t)}|u_y(y,t)|^2dy.

\end{array}\]
In what follows, we shall denote $g_L(t):=u(\rho_L(t),t)$ for $t\in(0,T)$, the measurements of $u$ on $\Gamma_L$. Then
\begin{equation}\label{eq:dI/dt}
\begin{array}{rcl}
\dfrac{dI_L}{dt}(t) &=& \dfrac{1}{2} g_L^2(t) \rho_L '(t)+g_L(t)u_y(\rho_L (t),t)-\dint_{-\infty}^{\rho_L (t)}|u_y(y,t)|^2dy\\ &\leq& \dfrac{1}{2}g_L^2(t)\rho_L '(t)+g_L(t)u_y(\rho_L (t),t).
\end{array}
\end{equation}
We would like to bound the expression above so that the right-hand side depends only on the measurements $g_L$. Once we have that, we will integrate from $0$ to $t$ so that the left-hand side leads to $I_L(t)$ getting an estimate of $I_L$ in terms of $g_L$. 

For the first term in the right-hand side of \cref{eq:dI/dt} we see that \Cref{item:Support of sigma,item:Regularity of sigma} imply that $\sigma'(y)\to 0$ when $y\to a_1$ and $\rho_L'(t)\to\infty$ when $t\to 0$, thus we need to control this latter growth with the decay of $g_L(t)$ in the same limit. For the second term we directly estimate $u_y(\rho_L(t),t)$.

\begin{enumerate}[I)]
\item Let us analyze the term $g_L(t)\rho_L'(t)$ in \cref{eq:dI/dt} for $t$ in $(0,T)$, which turns out to be equivalent to study $\dfrac{g_L(\sigma(y))}{\sigma'(y)}$ for $y$ in $(a_1,a_1+\xi_1]$. Owing to the support of $u_0$ we have that
\[\begin{array}{rcl}
\left|\dfrac{g_L(\sigma(y))}{\sigma'(y)}\right| & \leq &   \displaystyle\int_{a_1+\delta}^{a_2-\delta}\frac{|u_0(r)|}{(4\pi \sigma(y))^{1/2}\sigma'(y)}\text{exp}\left(-\frac{|y-r|^2}{4\sigma(y)}\right)dr,
\end{array}\]
for $y\in(a_1,a_1+\xi_1]$. The term multiplying $|u_0(r)|$ inside the previous integral may be uniformly bounded for $(y,r)\in [a_1,a_1+\xi_1]\times [a_1+\delta,a_2-\delta]$. In effect, a singularity may occur when $y$ approaches $a_1$, but if $|a_1-y|< \delta/2$, and since $|a_1-r|\geq \delta$, then we have 
\[|a_1-r|\leq |y-r|+|a_1-y|<|y-r|+\delta/2<|y-r|+|a_1-r|/2,\]
and then
\[|y-r|>1/2|a_1-r|>\delta/2,\] 
hence
\[\dfrac{1}{\sigma(y)^{1/2}\sigma'(y)}\text{exp}\left(-\dfrac{|y-r|^2}{4\sigma(y)}\right) \leq \dfrac{1}{\sigma(y)^{1/2}\sigma'(y)}\text{exp}\left(-\dfrac{\delta^2}{\sigma(y)}\right)\]
Plugging \cref{item:Order sigma} to the previous estimate we conclude the existence of a constant $C>0$ such that
\[\left|\dfrac{g_L(y)}{\sigma'(y)}\right| \leq  C \displaystyle\int_{a_1+\delta}^{a_2-\delta}|u_0(r)| dr = C||u_0||_{L^1(\R)}.\]

\item Now we estimate $u_y(\rho_L(t),t)$ in $(0,T]$ for the second term in the right-hand side in \cref{eq:dI/dt}, or, equivalently, $u_y(y,\sigma(y))$ in $(a_1,a_1+\xi_1]$. First recall that
\[
u_y(y,\sigma(y)) = \dint_{a_1+\delta}^{a_2-\delta}\dfrac{u_0(r)}{\sqrt{4\pi \sigma(y)}}\text{exp}\left(-\dfrac{(y-r)^2}{4\sigma(y)}\right)\dfrac{-|y-r|}{2\sigma(y)}dr. \]
Again, the term accompanying $|u_0(r)|$ is uniformly bounded for $(y,r)\in [a_1,a_1+\xi_1]\times[a_1+\delta,a_2-\delta]$ by continuity. In conclusion,
\[|u_y(\rho_L(t),t)|\leq C\dint_{a_1+\delta}^{a_2-\delta}|u_0(r)|dr = C||u_0||_{L^1(\R)}.\]
\end{enumerate}

Bringing all the previous estimates together along with \cref{eq:dI/dt} it yields
\[\begin{array}{rcl}
\dfrac{dI_L}{dt} &\leq& \dfrac{1}{2}|g_L^2(t)||\rho_L'(t)|+|g_L(t)||u_y(\rho_L(t),t)|\\

&\leq & C||u_0||_{L^1(\R)}|g_L(t)|,
\end{array}\]
thus, integrating from $0$ to $\tau$ leads to
\[I_L(\tau) \leq C||u_0||_{L^{1}(\R)}\dint_0^{\tau}|g_L(t)|dt.\]
Integrating again in time from $0$ to $T$, we get that
\[\begin{array}{rcl}
\dfrac{1}{2}\dint_{0}^{T}\dint_{-\infty}^{\rho_L(\tau)}|u(y,\tau)|^2dy d\tau &=& \dint_{0}^{T}I_L(\tau)d\tau\\ 

\text{(Fubini)} &\leq & CT||u_0||_{L^{1}(\R)}\dint_{0}^{T}|g_L(t)| dt \\
&=& CT||u_0||_{L^{1}(\R)}||u||_{L^1(\Gamma)}.

\end{array}\]
%
%
%
%
%
%

\end{proof}

\begin{remark}
So far, we have estimated the energy in region $L$ (see \cref{fig:Curve stability}) with respect to the measurements available on $\Gamma_L$. Analogously, we can do the same to estimate the energy contained in region $R$ with respect to measurements available on $\Gamma_R$. Same calculations as before leads to
\[\dfrac{1}{2}\dint_{0}^{T}\dint_{\rho_R(\tau)}^{\infty}|u(y,\tau)|^2dyd\tau\leq C_9T||u_0||_{L^{1}(\R)}||u||_{L^1(\Gamma_R)}.\]

\end{remark}

We are now able to conclude the desired stability:
\begin{proof}[Proof of \cref{th:Curve stability}]
Let $\omega = [a_1,a_2]^c$. By \cref{th:Lipschitz stability unbounded domain} there exists a constant $C_2>0$ such that
\begin{equation}\label{ineq:Conclusion 1}
||u_0||_{L^1(\R)}\leq C_2 ||u||_{L^2(\omega\times(0,T))}.
\end{equation}

Moreover, \cref{th:Curve estimate} implies
\begin{equation}\label{ineq:L R Energy}
\begin{array}{rcl}
||u||^2_{L^2(\omega \times (0,T))}&\leq& \dint_{0}^{T}\dint_{-\infty}^{\rho_L(\tau)}|u(y,\tau)|^2dyd\tau + \dint_{0}^{T}\dint_{\rho_R(\tau)}^{\infty}|u(y,\tau)|^2dyd\tau\\

&\leq& C_9||u_0||_{L^1(\R)}T(||u||_{L^1(\Gamma_L)}+||u||_{L^1(\Gamma_R)}).

\end{array}
\end{equation} 
We conclude with \cref{ineq:Conclusion 1} and \cref{ineq:L R Energy}.
\end{proof}

\begin{remark}\label{rmk:C_vs_T_1}
The stability constant decreases with respect to $T$, which is natural from the fact that a larger $T$ means we use more information contained in our measurements. 
In fact, taking $\varepsilon=T/4$ (as in \cref{remark:C_epsilon}) the constant turns out to be
\[C_9C_2^2T  = C_9\text{\normalfont exp}\left(\dfrac{\hat{s}K}{T^2}\right)\dfrac{C_3^2}{T^4}T = C_9 \text{\normalfont exp}\left(\dfrac{\hat{s}K}{T^2}\right)\dfrac{C_3^2}{T^3}. \] 

\end{remark}

\section{Stability for 2D LSFM inverse problem}\label{sec:LSFM-stability}

LSFM is an instrument that allows researchers to observe live specimens and dynamical processes by attaching fluorophores to certain cellular structures. After attaching fluorophores, the process of imaging the specimen is carried out in two steps: illumination (or excitation) and fluorescence. In the first stage a slice of the object is illuminated with a light sheet, exciting fluorophores therein. Subsequently, in the second stage, a camera measures the fluorescent radiation obtaining a two dimensional image. The microscope then scans plane by plane so that a stack of two dimensional images is collected, which represents the three dimensional object. In \cite{cueva2020mathematical} a two dimensional model is considered, hence, the illumination takes the form of a laser beam issued from different heights instead of light sheets. The Fermi-Eyges pencil-beam equation governs the illumination process, describing the space and angular distribution of photons. During the fluorescence step, photons coming out from fluorescent molecules propagate in several directions reaching the camera. The Radiative Transport Equation is used to model this second step \cite{Bal_2009}. The whole process is represented in \cref{fig:experiment}.

\begin{figure}[ht]
 \centering
 \includegraphics[width=0.7\textwidth]{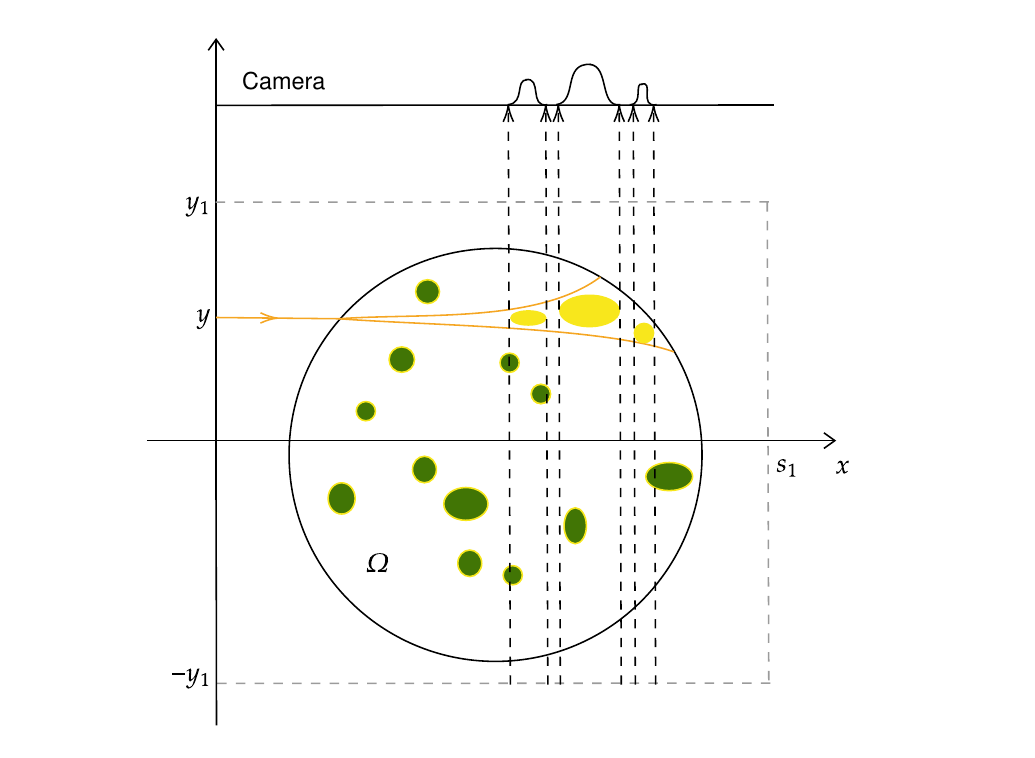}
 \caption{Representation of illumination and fluorescence stages in LSFM. A laser beam is emitted at height $y$ and illuminates the object from left. Due to scattering, photons are deflected from their original direction. Some fluorophores got excited (in yellow), the others (in dark green) will not fluoresce. Since we assume the camera is collimated, it will measure only photons emitted in straight vertical direction.} \label{fig:experiment}
\end{figure}

Let us recall some of the definitions given in \cite{cueva2020mathematical} for the setting of the LSFM model: we consider the domain $\Omega \subseteq [0,s_1]\times[-y_1,y_1]$ as the object to be observed. For $y\in[-y_1,y_1]$ we define $x_y=\text{inf}\{x:(x,y)\in\Omega\}$. For $s\in [0,s_1]$ we define
\[Y_s=\{y\in [-y_1,y_1]: x_y \leq s\},\quad
s^-= \text{inf}\{s: Y_s\neq \emptyset\}\]


Let $s^+$ be the largest $s$ such that $[x_y,s]\times\{y\}\subseteq \Omega$. For a fixed $s \in [s^-,s^+]$ we define $\ubar{y}=\ubar{y}(s)=\text{inf}(Y_s)$ and $\bar{y}=\bar{y}(s)=\text{sup}(Y_s)$, which, in what follows, we shall call them \textit{object top boundary} and \textit{object bottom boundary} respectively. For $s^+$ we denote $y^+=\bar{y}(s^+)$ and $y^-=\ubar{y}(s^+)$. Finally, we define the function $\gamma:Y_s\to [0,s^+]$ as $\gamma(y)=x_y$. See \cref{fig:geometric definitions} for these definitions.

There are two physical parameters involved during the illumination stage: the attenuation $\lambda$, corresponding to a measure of absorption of photons, and $\psi$ corresponding to a measure of scattering which explains the broadening of the laser beam shown in \cref{fig:experiment,fig:geometric definitions}. On the other hand, in the second stage the third physical parameter involved is the attenuation $a$, a measure of absorption of fluorescent radiation. We assume that $\lambda,a \in C_{pw}(\overline{\Omega})$, $\psi \in C^1(\overline{\Omega})$, and $\gamma\in C^1(Y_s)$. According to \cite{cueva2020mathematical}, the measurement obtained by the camera at pixel $s$ when illumination is made at height $y\in Y_s$ is given by the next expression:
\begin{equation}\label{eq:Measurements p}p(s,y) = c\cdot\text{exp}\left(-\dint_{\gamma(y)}^s\lambda(\tau,y)d\tau\right)\dint_{\R}\dfrac{\mu(s,r)e^{-\int_r^{\infty}a(s,\tau)d\tau}}{\sqrt{4\pi \sigma(s,y)}}\text{exp}\left(-\dfrac{(r-h)^2}{4\sigma(s,y)}\right)dr,
\end{equation}
where 
\begin{equation}\label{eq:sigma}
\sigma(s,y) = \dfrac{1}{2}\dint_{\gamma(y)}^s(s-\tau)^2\psi(\tau,y)d\tau.
\end{equation}

\begin{figure}[h]
\centering
\includegraphics[width=0.73\textwidth]{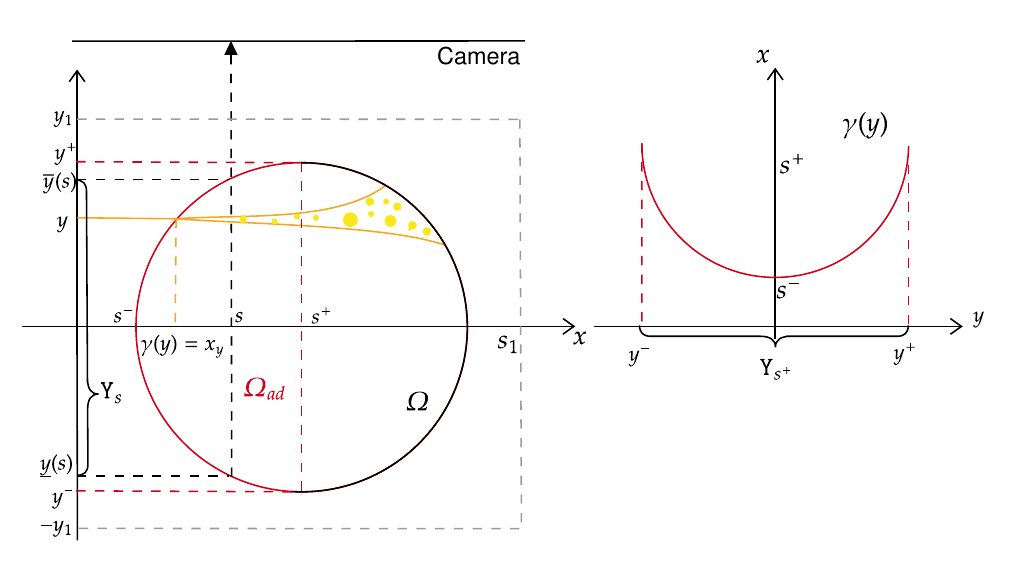}
\caption{Left figure presents the definition of the quantities $s^-$ and $s^+$ and the set $Y_{s^+}$ for a generic set $\Omega$. Right figure shows the function $\gamma$ and its domain $Y_{s^+}$ in the new coordinates.}
\label{fig:geometric definitions}
\end{figure}

In what follows, we shall fix $s$ and consider the functions $p(y) := p(s,y)$ and $\sigma(y):=\sigma(s,y)$, so that $p$ represents the measurements obtained at a pixel $s$, as a function of the height of illumination $y$. Besides, we identify $p$ and $\sigma$ with their zero-extension to the whole real line. If we consider $u$ as the solution of equation \cref{eq:Heat Equation} with $n=1$ and initial condition $u_0(y)= \mu(s,y)e^{-\int_y^{\infty}a(s,\tau)d\tau}$ then we have the following relation:
\begin{equation}\label{eq:Curve measurements}u(y,\sigma(y))=\dfrac{1}{c}\text{exp}\left(\dint_{\gamma(y)}^s\lambda(\tau,y)d\tau\right)p(y)
,\quad \forall y\in \R.\end{equation}

The above equation tells us that we have measurements of the solution of the heat equation in $\R$ on the curve  $\Gamma := \{(y,\sigma(y)):y\in \R\}\subseteq \R \times [0,\infty)$. Then, if we want a stability result for this inverse problem, it only remains to verify the hypothesis of \cref{th:Curve stability}. For this purpose, let us define a set of admissible sources: let $\widetilde{\Omega}\subsetneq\Omega$ be an open subdomain strictly contained in $\Omega$  and define $\mathcal{B}$ the set of admissible sources as (see \cref{fig:mu_delta_s}):
\begin{equation}\label{cond:suppor_mu}
\mathcal{B}:=\{\mu\in L^{1}(\R^2): \mu(s,\cdot)\in L^1(\R), \forall s\in(s^-,s^+), \supp(\mu) \subset\widetilde{\Omega}.\}
\end{equation}

The main result of this section is the following theorem

\begin{theorem}\label{th:LSFM stability}
Let $\mu\in \mathcal{B}$, $s\in(s^-,s^+)$. Then, there exists a constant $C_{10}=C_{10}(\sigma,s)>0$ such that

\[\left|\left|\mu(s,\cdot)e^{-\int_{\cdot}^{\infty}a(s,\tau)d\tau}\right|\right|_{L^1(\R)}\leq C_{10} \left(\left|\left|\frac{1}{c}p(\cdot)e^{\int_{\gamma(\cdot)}^s\lambda(\tau,\cdot)d\tau}\right|\right|_{L^1((\ubar{y},\ubar{y}+\xi_1)\cup(\bar{y}-\xi_2,\bar{y}))}\right),\]
and therefore
\[||\mu(s,\cdot)||_{L^1(\R)}\leq C_{11}||p||_{L^1((\ubar{y},\ubar{y}+\xi_1)\cup(\bar{y}-\xi_2,\bar{y}))},\]
where
\[C_{11}=\dfrac{C_{10}}{c}\exp(||a(s,\cdot)||_{L^1(\R)}+||\lambda||_{L^\infty(\R^2)}(s-s^-)).\]
\end{theorem}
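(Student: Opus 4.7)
The plan is to recast the problem so that \cref{th:Curve stability} applies directly. Fix $s \in (s^-, s^+)$ and set
\[
u_0(y) := \mu(s, y)\, e^{-\int_y^\infty a(s,\tau)\, d\tau}, \qquad y \in \R,
\]
and let $u$ be the solution to \cref{eq:Heat Equation} with $n=1$ and initial datum $u_0$. Because $\mu \in \mathcal{B}$, the function $u_0$ belongs to $L^1(\R)$ and is compactly supported inside $Y_s$; concretely, since $\supp(\mu) \subset \widetilde{\Omega} \subsetneq \Omega$, there exists $\delta=\delta(s,\widetilde{\Omega})>0$ such that $\supp(u_0) \subset (\ubar{y}+\delta,\, \bar{y}-\delta)$. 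The identity \cref{eq:Curve measurements} then gives the value of $u$ on the graph of $\sigma(s,\cdot)$ in terms of $p$.

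Next, I would verify that $y \mapsto \sigma(s,y)$ satisfies the $\sigma$-properties with $a_1=\ubar{y}$, $a_2=\bar{y}$. Property (i) is immediate from $\psi \in C^1$ and $\gamma \in C^1(Y_s)$. Property (ii) follows from the definition \cref{eq:sigma} once one notes that $\gamma(\ubar{y})=\gamma(\bar{y})=s$, so $\sigma(s,y)\to 0$ at the endpoints and $\sigma(s,y)>0$ on $(\ubar{y},\bar{y})$ (assuming $\psi > 0$ inside $\Omega$, as is natural in the LSFM model). Property (iii) follows by direct computation of $\partial_y \sigma$, which near $\ubar{y}$ is dominated by the contribution of $-\tfrac{1}{2}(s-\gamma(y))^2\psi(\gamma(y),y)\gamma'(y)$, positive since $\gamma$ is decreasing at $\ubar{y}^+$; an analogous argument handles $\bar{y}$. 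Property (iv) is the delicate point and is the main obstacle of the proof: it amounts to a geometric/regularity assumption on $\gamma$ and $\psi$ near the top and bottom boundaries of the object, and must be imposed (or inherited from standing assumptions on $\Omega$).

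With $\sigma(s,\cdot)$ satisfying the $\sigma$-properties and $u_0$ supported inside $(\ubar{y}+\delta,\bar{y}-\delta)$, I invoke \cref{th:Curve stability} with $\xi_1,\xi_2$ furnished by property (iii) to obtain
\[
\|u_0\|_{L^1(\R)} \leq C_4(\sigma,\delta)\, \|u\|_{L^1(\Gamma_L \cup \Gamma_R)}.
\]
Rewriting the left-hand side in terms of $\mu$ and, via \cref{eq:Curve measurements}, the right-hand side in terms of $p$ on the parameterization intervals $(\ubar{y},\ubar{y}+\xi_1)$ and $(\bar{y}-\xi_2,\bar{y})$ produces the first inequality of the theorem, with $C_{10}:=C_4(\sigma,\delta)$ (absorbing any arclength Jacobian $\sqrt{1+\sigma'^2}$ into the constant, since $\sigma$ is fixed).

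Finally, the second inequality follows from two pointwise exponential bounds. On the one hand, using $a \geq 0$ (or, more generally, $\int |a| < \infty$),
\[
|\mu(s,y)| = |u_0(y)|\, e^{\int_y^\infty a(s,\tau)\, d\tau} \leq e^{\|a(s,\cdot)\|_{L^1(\R)}} |u_0(y)|.
\]
On the other, since $\gamma(y) \geq s^-$ for every $y$ where $p(y)\neq 0$,
\[
\left|\tfrac{1}{c}p(y)\, e^{\int_{\gamma(y)}^s \lambda(\tau,y)\, d\tau}\right| \leq \tfrac{1}{c}\, e^{\|\lambda\|_{L^\infty(\R^2)}(s-s^-)}\, |p(y)|.
\]
Integrating these estimates over the relevant sets and combining with the first inequality yields the stated bound with $C_{11} = C_{10}c^{-1}\exp\bigl(\|a(s,\cdot)\|_{L^1(\R)}+\|\lambda\|_{L^\infty(\R^2)}(s-s^-)\bigr)$.
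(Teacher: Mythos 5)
Your overall strategy is exactly the one the paper follows: set $u_0(y)=\mu(s,y)e^{-\int_y^\infty a(s,\tau)d\tau}$, use $\mu\in\mathcal{B}$ to place $\supp(u_0)$ inside $(\ubar{y}+\delta,\bar{y}-\delta)$, verify the $\sigma$-properties for $y\mapsto\sigma(s,y)$, invoke \cref{th:Curve stability}, and then peel off the exponential factors to get the second inequality. Your treatment of properties (i)--(iii) and of the final exponential bounds matches the paper's.

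The gap is property (iv). You declare it ``the main obstacle'' and then propose to \emph{impose} it as an additional geometric/regularity assumption, but \cref{th:LSFM stability} carries no such hypothesis: it assumes only $\mu\in\mathcal{B}$ together with the standing LSFM assumptions $\psi\in C^1(\overline{\Omega})$, $\gamma\in C^1(Y_s)$ (and $\gamma'(\ubar{y})<0$). A proof must therefore \emph{derive} property (iv) from these, and the paper does so with a short but essential computation that your proposal is missing: from \cref{eq:Bound derivative sigma} one gets the lower bound $\sigma'(y)\geq C(s-\gamma(y))^2$ on $(\ubar{y},\ubar{y}+\xi_1]$, while directly from \cref{eq:sigma} one gets the upper bound $\sigma(y)\leq C(s-\gamma(y))^3$. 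Hence
\[
\frac{1}{\sigma'(y)}\exp\left(-\frac{1}{\sigma(y)}\right)\leq \frac{1}{C(s-\gamma(y))^{2}}\exp\left(-\frac{1}{C(s-\gamma(y))^{3}}\right)\longrightarrow 0
\]
as $y\to\ubar{y}^+$ (using $\gamma(\ubar{y})=s$), because $1/\sigma'$ blows up only polynomially in $1/(s-\gamma(y))$ whereas $\exp(1/\sigma)$ blows up like $\exp(c/(s-\gamma(y))^3)$. This is precisely the point where the cubic vanishing of $\sigma$ versus the quadratic vanishing of $\sigma'$ at the top and bottom boundaries makes property (iv) automatic; without this step the application of \cref{th:Curve stability} is not justified and the theorem as stated is not proved. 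Everything else in your proposal is sound, including the observation that the Jacobian relating $\|u\|_{L^1(\Gamma_L\cup\Gamma_R)}$ to the $L^1$ norm of $p$ over $(\ubar{y},\ubar{y}+\xi_1)\cup(\bar{y}-\xi_2,\bar{y})$ can be absorbed into the constant.
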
 

\begin{figure}[h]
\centering
\includegraphics[width=0.71\textwidth]{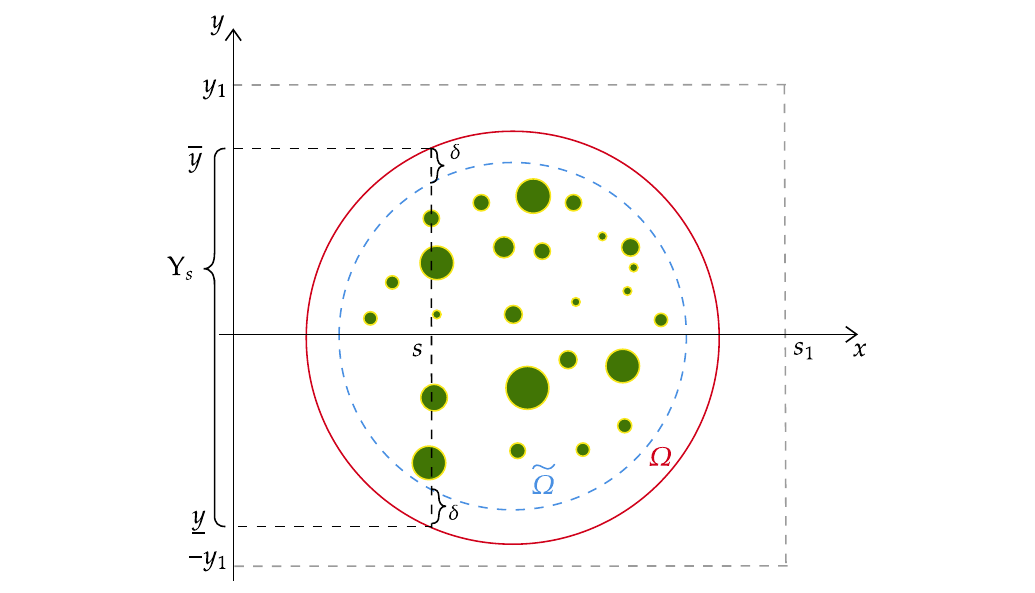}
\caption{Assumptions for \cref{th:LSFM stability}. The $\supp(\mu)$ must be far from $\partial \Omega$, which is accomplished by letting $\mu \in \mathcal{B}$.}
\label{fig:mu_delta_s}
\end{figure}

\begin{proof}
Recall that we consider $\mu(s,\cdot)e^{-\int_{\cdot}^{\infty}a(s,\tau)d\tau}$ as the initial condition of the heat equation in $\R$ and measurements are given according to \cref{eq:Curve measurements}. As in \cref{fig:mu_delta_s}, since $\mu\in\mathcal{B}$, for the fixed $s$ there exists a constant $\delta=\delta(s)>0$ such that $\mu(s,\cdot)\equiv 0$ in $(\ubar{y},\ubar{y}+\delta)\cup(\bar{y}-\delta,\bar{y})$, \emph{i.e.}, $\supp(\mu(s,\cdot)e^{-\int_{\cdot}^{\infty}a(s,\tau)d\tau})\subset (\ubar{y}+\delta,\bar{y}-\delta)$. Now, it suffices to prove that $\sigma$ satisfies the $\sigma-$properties:
\begin{enumerate}[i)]
\item From \cref{eq:sigma} we have that 
\begin{equation}\label{eq:Derivative sigma}
\sigma'(y)=-\dfrac{1}{2}\gamma'(y)(s-\gamma(y))^2\psi(\gamma(y),y)+\dfrac{1}{2}\dint_{\gamma(y)}^s(s-\tau)^2\dfrac{\partial \psi}{\partial y}(\tau,y)d\tau, \quad \text{for } y\in Y_s.\end{equation}
By the regularity of $\gamma$ and $\psi$, we get that $\sigma \in C^1(Y_s)$. Furthermore $\sigma'(\ubar{y})=\sigma'(\bar{y})=0$ since $\gamma(\bar{y})=\gamma(\ubar{y})=s$. We conclude that $\sigma \in C^1(\R)$ by noticing that $\sigma'(y)=0$ for $y\notin Y_s$.
\item From \cref{eq:sigma} and the zero-extension of $\sigma$, it is direct that $\sigma>0$ for $y\in (\ubar{y},\bar{y})$ and $\sigma(y)= 0 $ for $y\in (\ubar{y},\bar{y})^c$.
\item From \cref{eq:Derivative sigma} we get that \begin{equation}\label{eq:Bound derivative sigma}
\sigma'(y )\geq \dfrac{1}{2}(s-\gamma(y))^2\left[-\gamma'(y)\psi(\gamma(y),y)-\displaystyle\int_{\gamma(y)}^s|\psi_y(\tau,y)|d\tau\right].
\end{equation}
Let $m:=\displaystyle\inf_{(x,y)\in\overline{\Omega}}|\psi(x,y)|$ and $M:= \displaystyle\sup_{(x,y)\in\overline{\Omega}}\left|\dfrac{\partial\psi}{\partial y}(x,y)\right|$. Then
\[\dfrac{2\sigma'(y)}{(s-\gamma(y))^2}\geq -\gamma'(y)m-\dfrac{1}{3}(s-\gamma(y))M \xrightarrow[y\to\ubar{y}^+]{} -\gamma'(\ubar{y})m\]
Recalling that $\gamma'(\ubar{y})<0$ we conclude the existence of $\xi_1>0$ such that $\sigma'>0$ in $(\ubar{y},\ubar{y}+\xi_1]$. By letting $y\to\bar{y}^-$ instead of $\ubar{y}^+$ we obtain the existence of $\xi_2>0$ such that $\sigma'<0$ in $[\bar{y}-\xi_2,\bar{y})$. Furthermore,
we redefine $\xi_1$ and $\xi_2$ such that $\sigma(\ubar{y}+\xi_1)=\sigma(\bar{y}-\xi_2)=\displaystyle\min\{\sigma(\ubar{y}+\xi_1),\sigma(\bar{y}-\xi_2)\}$.
\item Finally, we not only prove that $\dfrac{1}{\sigma'(y)} = \mathcal{O}\left(\exp\left(\dfrac{1}{\sigma(y)}\right)\right)$ as $y$ goes to $\ubar{y}^+$ and $\bar{y}^-$ but $\displaystyle\lim_{y\to\ubar{y}^+}\dfrac{1}{\sigma'(y)}\exp\left(-\dfrac{1}{\sigma(y)}\right)=0$. For the limit as $y$ goes to $\bar{y}^-$ the argument is analogous. In effect, from \cref{eq:Bound derivative sigma} we get that 
\[\sigma'(y)\geq C(s-\gamma(y))^2, \quad \text{for } y\in (\ubar{y},\ubar{y}+\xi_1].\]



Secondly, notice that 
\[\sigma(y)=\dfrac{1}{2}\displaystyle\int_{\gamma(y)}^s(s-\tau)^2\psi(\tau,y)d\tau\leq C (s-\gamma(y))^3.\]
Then, since $\gamma(\ubar{y})=s$ we have that
\[\begin{array}{rcl}
\dfrac{1}{\sigma'(y)}\exp\left(-\dfrac{1}{\sigma(y)}\right) & \leq & \dfrac{1}{C(s-\gamma(y))^{2}}\text{exp}\left(-\dfrac{1}{C(s-\gamma(y))^3}\right) \\

& \to &  0, \quad \text{as } y\to{\ubar{y}^+},
\end{array}\]
\end{enumerate}
We conclude by applying \cref{th:Curve stability}.
\end{proof}

\begin{remark}\label{rmk:C_vs_T_2}
Certainly, the stability constant $C_{10}$ is equal to $C_4$ in \cref{th:Curve stability}. If we define $T_1 := \sigma(a_1+\xi_1)$ and $T_2 := \sigma(a_2+\xi_2)$, then we may consider the time $T = \displaystyle\min\{T_1,T_2\}$ as in \cref{fig:Gamma}. In the next section, we shall study the dependence of the stability constant with respect to this variable $T$.
\end{remark}

\begin{figure}[h]
\centering
\includegraphics[width=0.8\textwidth]{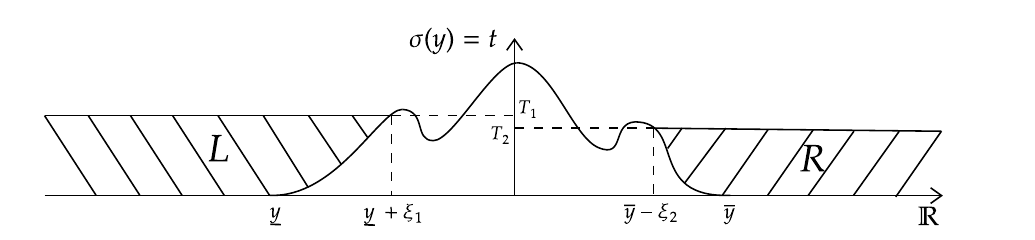}
\caption{Curve $\Gamma$ on which measurements are available for LSFM model. In this example, we must consider the variable $T=T_2$.}
\label{fig:Gamma}
\end{figure}

\FloatBarrier

\section{Numerical results in LSFM}\label{sec:Numerical results}

In this section, we analyze the behavior of the stability constant $C_{10}$ given by~\cref{th:LSFM stability}. Mainly, we observe its dependency with respect to the variable $T$, defined by $T=\min\{T_1,T_2\}$, as commented in \cref{rmk:C_vs_T_1,rmk:C_vs_T_2} (see~\cref{fig:Gamma}). We recall that the definition of $T$ depends on the monotonicity of function $\sigma$ defined in terms of the diffusion coefficient $\psi$ in~\cref{eq:sigma}. Moreover, since the result given by~\cref{th:LSFM stability} considers $\mu\in \mathcal B$, \emph{i.e.} $\supp(\mu)\subset \widetilde \Omega$, we show below that the constant $C_{10}$ increases as the support of $\mu$ gets closer to the boundary of $\widetilde\Omega$, and the stability is not guarantied when we reach  $\partial\widetilde\Omega$. We devote part of the experiments to analyze the observation interval $(\ubar{y},\ubar{y}+\xi_1)\cup(\bar{y}-\xi_2,\bar{y})$, to understand not only the stability of reconstructing $\mu(s,\cdot)$ but also, the quality of its reconstruction. 

\subsection{Datasets}
We consider three datasets as shown in~\cref{fig:datasets}. Source in \texttt{Dataset 1} describes a random distributed fluorescent sources supported in a circular domain. The attenuation $\lambda$ in the illumination stage is constant and supported in $\Omega$ with radius greater than the support of $\mu$ to guarantee the hypothesis~\cref{cond:suppor_mu}. This latter condition is also considered in the other two datasets. The source in \texttt{Dataset 2} is also randomly distributed in a support with a particular shape, this choice has the purpose of analyzing the behavior of the function $\sigma$ in terms of its increasing and decreasing intervals as we will see in \cref{ssec:top_bottom_boundaries} below. The attenuation is also constant as before. The third dataset aims to be closer to a real LSFM applications. We have simulated a \emph{zebrafish larvae} merged in an circular support with a constant attenuated substance. The source in real experiments determines, for example, zones with multicellular chemical reactions. The attenuation is composed by a constant background and a contribution given by the presence of the fluorescent source, \emph{i.e}, $\lambda = w_1\mathbbm{1}_{\widetilde \Omega} + w_2\mu$. In all cases, the diffusion term is defined by $\psi = c \lambda$, with $c>0$, which means that the diffusion is proportional to the attenuation properties of the medium. 

\begin{figure}[ht]
  \centering\begin{tabular}{@{}c@{ }c@{ }c@{ }c@{}}
	& Dataset 1 & Dataset 2& Dataset 3\\
\rowname{\hspace{10em}Source $\mu$}&	
 	\includegraphics[width=0.3\linewidth]{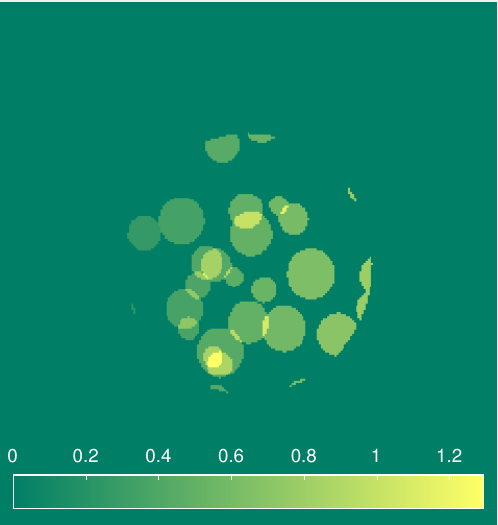}&
 	\includegraphics[width=0.3\linewidth]{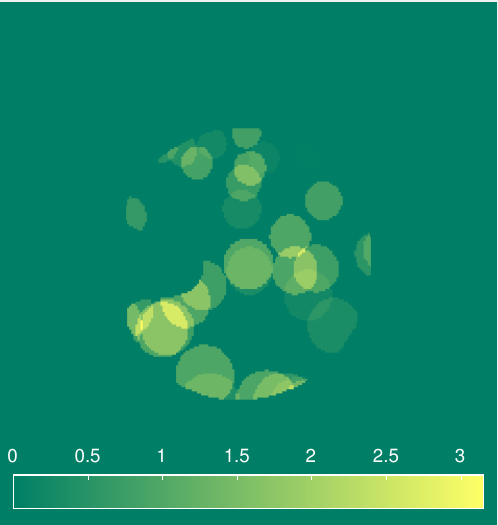}&
 	\includegraphics[width=0.318\linewidth, height=0.318\linewidth]{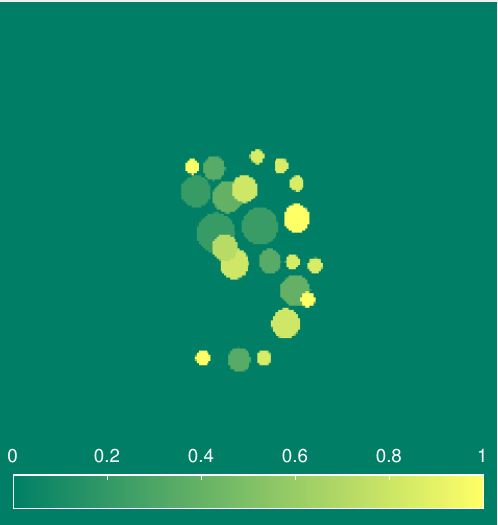}\\
\rowname{\hspace{10em}Diffusion $\psi$}&
  	\includegraphics[width=0.3\linewidth]{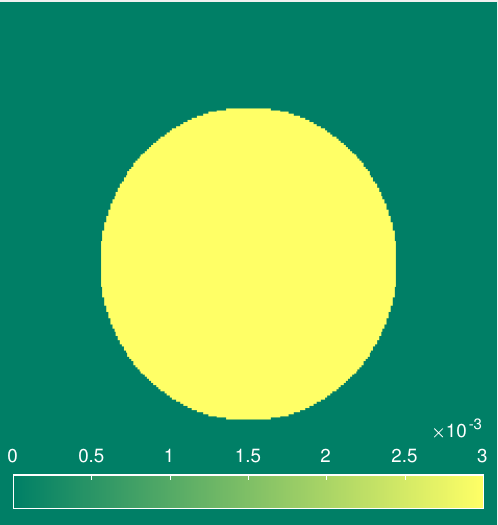}&
  	\includegraphics[width=0.3\linewidth]{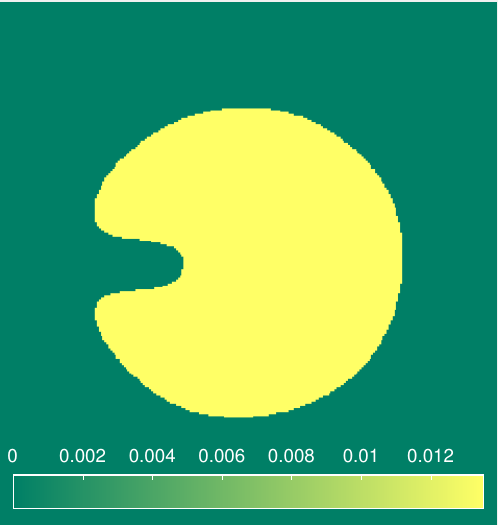}&
  	\includegraphics[width=0.318\linewidth, height=0.318\linewidth]{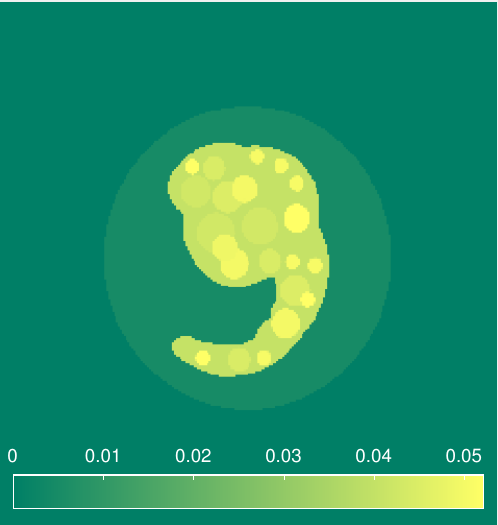}
  \end{tabular}
  \caption{Data sets. In the upper row, the sources $\mu$ for three different supports and in the bottom row, the corresponding diffusion maps. \texttt{Dataset 1} considers random distributed circles with constant attenuation. \texttt{Dataset 2} is defined on an irregular support useful to analyse the function $\sigma$. \texttt{Dataset 3} aims to be closer to a real experiment where a zebrafish embryo profile is simulated.   }\label{fig:datasets}
  \end{figure}

Our first interest is to show that the constant $C_{10}$ in~\cref{th:LSFM stability} has a relationship with the support of $\mu$, \emph{i.e.} the further we are from the boundary of $\tilde \Omega$, the better the stability of reconstructing $\mu$ is. This analysis is based on the condition number of a matrix $A_s$ that we detail below in~\cref{ssec:condition_As}. We use \texttt{Dataset 1} and \texttt{Dataset 2} to observe the proposed assay. 

\subsection{Condition number of matrix $A_s$ in terms of $\supp\mu(s,\cdot)$}\label{ssec:condition_As}
In the discrete case, as it was detailed in~\cite{cueva2020mathematical}, recovering $\mu$ is established as the solution of a linear system 
\[
\bm A \bfmu = \bb 	
\]
where $\bm A\in \mathbb R^{m\times n}$ links the vectorized source $\bm\mu\in \mathbb R^n$ to the array of measurements $\bb \in \mathbb R^m$. This is a direct consequence of the linear nature of measurements $p(s, y)$ in~\cref{eq:Measurements p} respect to the unknown variable $\mu$. 

The set of measurements considers $m_1$ heights of excitation (illuminations) and $m_2$ detectors using just one camera. The excitation process is made from right and left sides and, consequently, the number of observations is $m=2\cdot m_1\cdot m_2$. 

As we are interested on $\mu(s,\cdot)$ for a given $s\in (s^-, s^+)$ based on~\cref{th:LSFM stability}, we use the condition number of a submatrix $\bm A_s$ of $\bm A$ to know how stable is to reconstruct the restriction of $\bfmu$ to the depth $s$. This matrix $\bm A_s$ chooses the rows of $\bm A$ associated to the observations receipted by the detector $s$, one for each illumination, \emph{i.e.} $\bm A_s$ has $m_s = 2\cdot m_1$ rows. Furthermore, we want to study the stability in terms of $\supp\mu$, so we choose the columns of $\bm A$ where the support of $\mu(s,\cdot)$ is defined, this means that we focus on the pixels where the discrete source is nonzero. Observe that for larger values of the radius, more columns of $\bm A$ are taken. With this row and column sampling, we determine the submatrix $\bm A_s$ whose condition number value ($\text{cond}(A_s)$) is represented in~\cref{fig:conditionAs} for \texttt{Dataset 1} and \texttt{Dataset 2} in upper and bottom rows, respectively. 

For \texttt{Dataset 1}, the circular shape of $\supp\mu$ allows us to easily control its proximity to $\widetilde \Omega$. As it is presented in the right hand side of~\cref{fig:conditionAs}, we test radius from 0.55 until 0.8 in $\Omega = [0,2]\times [-1,1]$, the maximum value $r=0.8$ is the radius that defines $\widetilde \Omega$. As it is expected, the condition number increases when the support of $\mu$ tends to the boundary of $\widetilde \Omega$, this is shown in the left hand side of~\cref{fig:conditionAs}. We also include different values of $s$ to observe that this condition number also depends on this variable at least when the diffusion term $\psi$ is constant. The values of $s$ varies from 0.66 to 0.96, and the value of $\text{cond}(A_s)$ tends to increase when we go deeper in the object. This makes sense in the light of LSFM applications since the middle part of the object is harder to be observed directly from the measure process, and solving the inverse problem is also challenging in this zone. A similar result is observed for \texttt{Dataset 2}, we have define five different sizes of supports and five depths $s$. The condition number of the corresponding matrices $A_s$ increases when we get closer to the boundary of $\widetilde \Omega$. We also observe that the conditioning is worse for small values of $s$ compare to the previous example, this is also related to the number of illuminations in each depth $s$, we will observe this in detail in~\cref{ssec:recontruction}.
\begin{figure}[ht]
	  \includegraphics[width=0.48\linewidth]{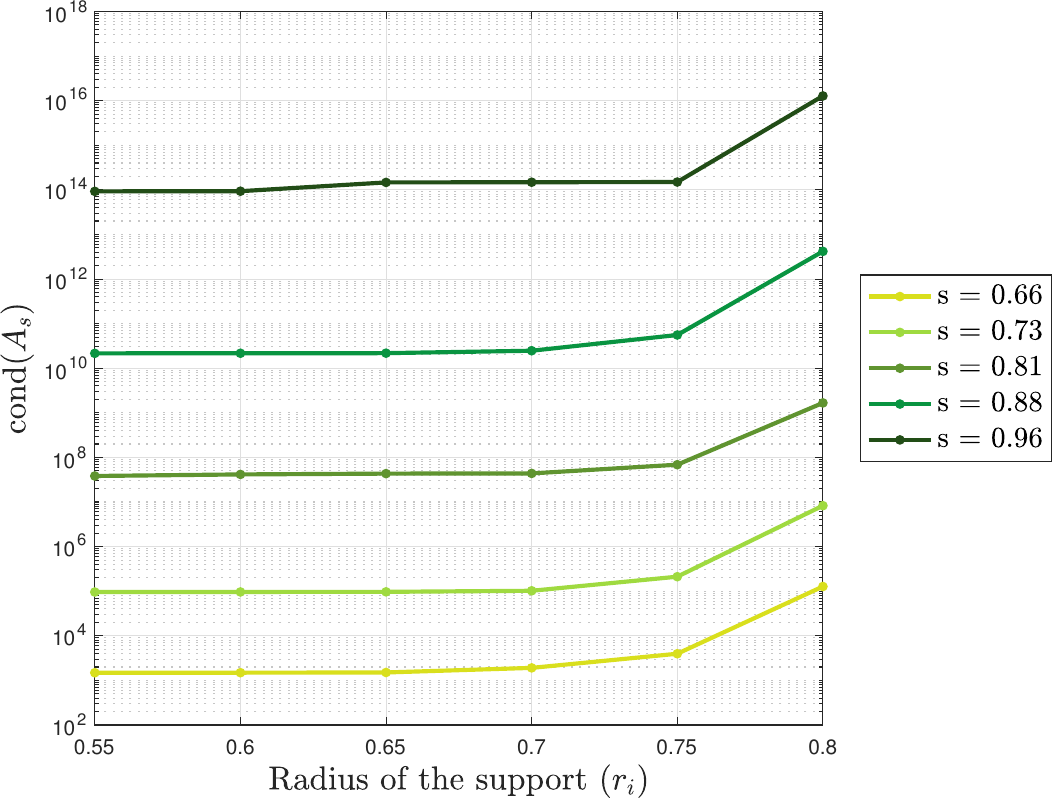}
	  \includegraphics[width=0.48\linewidth]{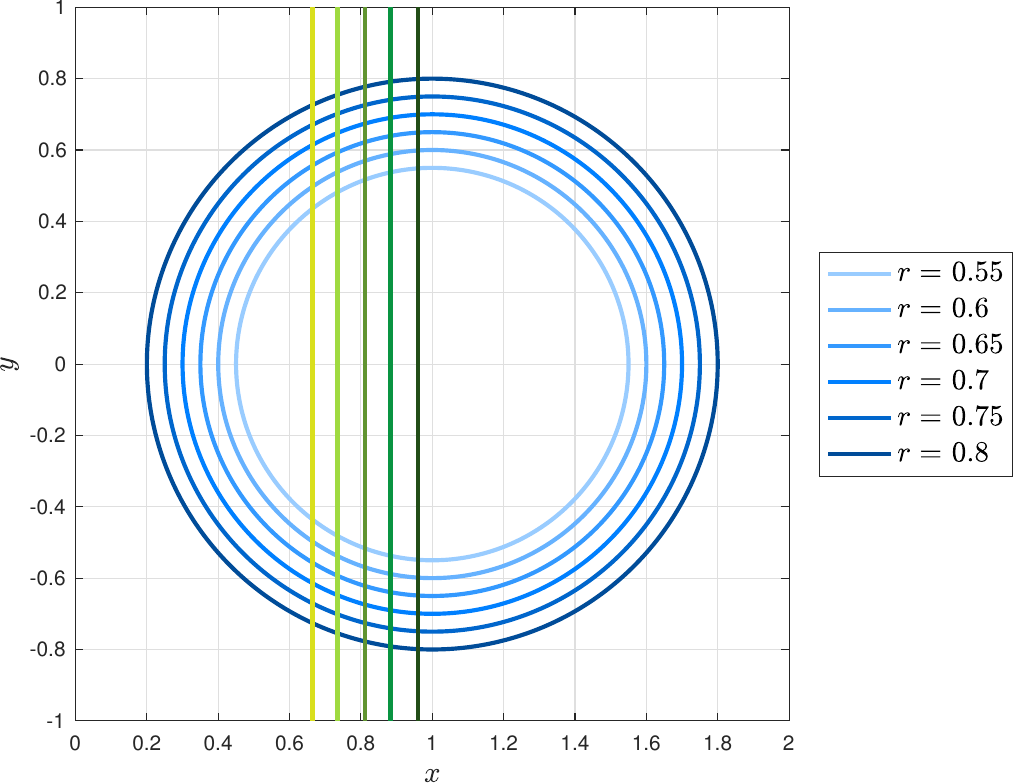}\\[15pt]
	  	  \includegraphics[width=0.48\linewidth]{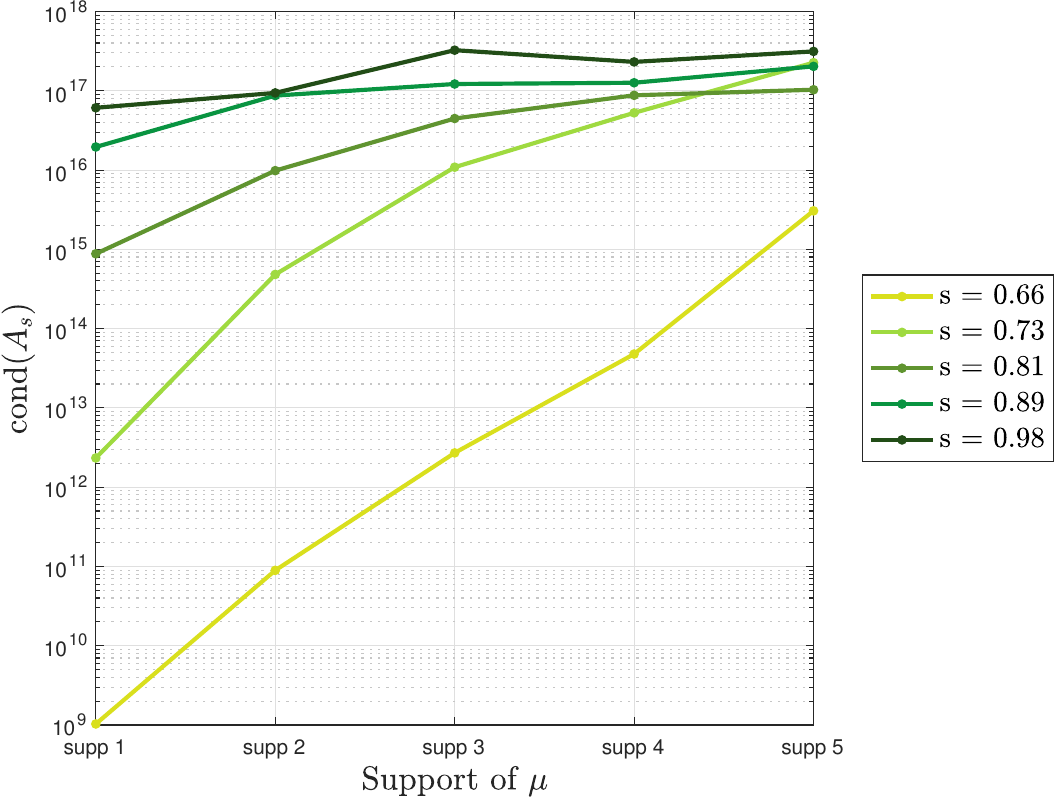}
	  \includegraphics[width=0.48\linewidth]{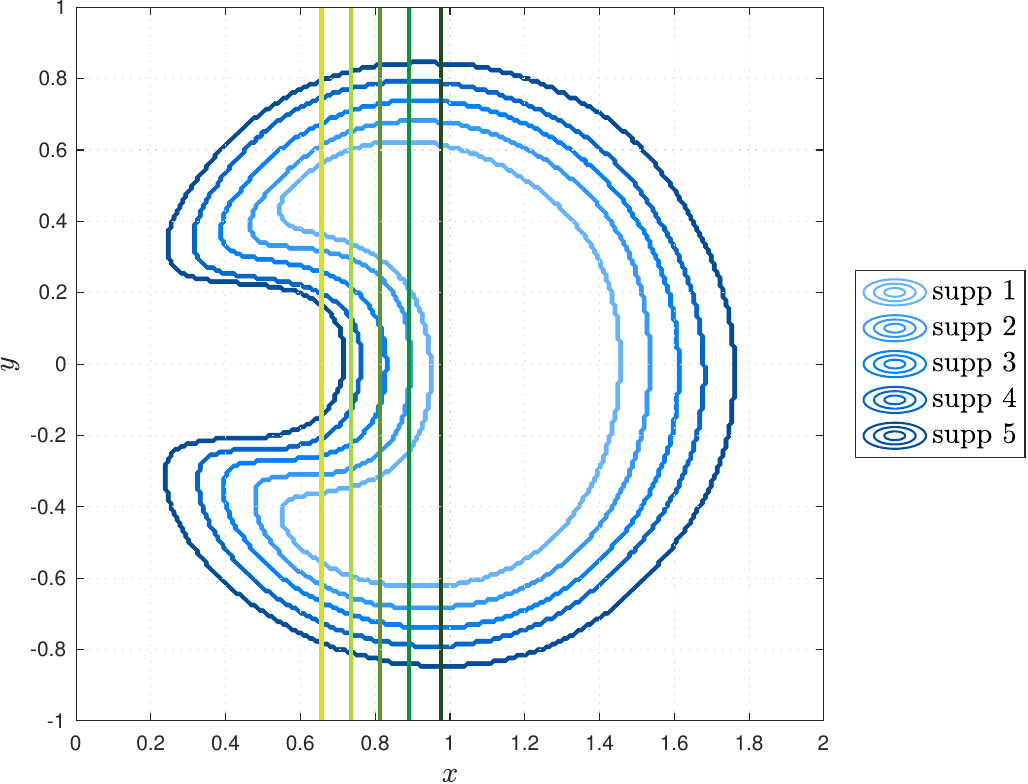}
  \caption{\emph{Left:} Condition number of matrix $A_s$ in terms of the size of the support of $\mu$ for \texttt{Dataset 1} in the upper row and, for \texttt{Dataset 2} in the bottom row. Each line is related to the depth $s$ as is shown in right figure. \emph{Right:} The different supports in terms of support and depths $s$ considered to computed the conditional number.}\label{fig:conditionAs}
\end{figure}

In the subsection below, we study in detail the observation intervals $(\ubar{y},\ubar{y}+\xi_1)\cup(\bar{y}-\xi_2,\bar{y})$ for a particular choice of $s$ using the three datasets. This will be used later to compare the condition number of $A_s$ when the illuminations are taken in the aforementioned interval or in the complete interval $(\ubar{y}, \bar{y})$.

\subsection{Object top and bottom boundaries}\label{ssec:top_bottom_boundaries}
Here, we use our three sets of data to identify the $\sigma$-properties in each case. We aim to do a representation as the one shown in~\cref{fig:Gamma}.

For \texttt{Dataset 1}, the shape of $\Gamma$ is a symmetric curve respect to the origin as is shown in~\cref{fig:gamma_dataset1}. This is a direct consequence of the constant diffusion $\psi$ and a circular domain $\widetilde \Omega$ centered in the origin. We observe that $\sigma'(y)>0$ in the interval $(-0.789,0)$ and $\sigma'(y)<0$ in $(0,0.789)$, so $T_1=T_2=3.109\times10^{-4}$ and is reached at $y=0$. According with these values, the observation set $(\ubar{y},\ubar{y}+\xi_1)\cup(\bar{y}-\xi_2,\bar{y})$ specified in~\cref{th:LSFM stability} corresponds to the interval $(-0.789, 0.789)$. 
\begin{figure}[tbhp]
	  \includegraphics[width=0.9\linewidth]{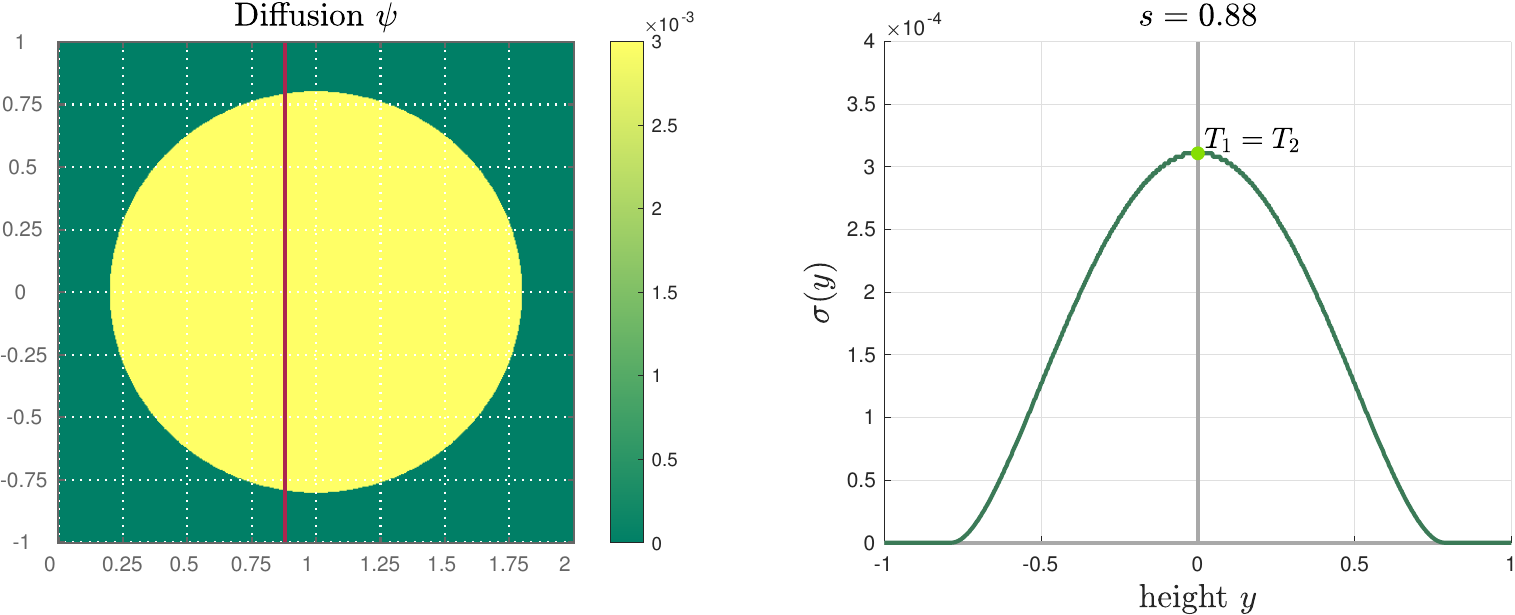}
  \caption{Curve $\Gamma$ for \texttt{Dataset 1} at $s=0.88$. On the left, the constant diffusion defined over a circle with centre in the origin and $r=0.8$. The vertical line defines the observed value of $s$. On the right, $\sigma(y)$ defines a symmetric curve $\Gamma$ where $T=T_1=T_2$. }\label{fig:gamma_dataset1}
\end{figure}

For \texttt{Dataset 2}, the curve $\Gamma$ presents a convexity near the origin as is shown in~\cref{fig:gamma_dataset2}. This behaviour is due to the particular shape of $\widetilde \Omega$, the diffusion map presents a lateral sag that is not perfectly symmetric respect to the origin in $y$-axis, as a consequence, the values of $T_1=\sigma(\ubar{y}+\xi_1)$ and $T_2=\sigma(\bar{y}-\xi_2)$ are slightly different. More precisely,  $T_1=8.48\times10^{-4}$, $T_2=8.31\times 10^{-4}$ and $T=T_2$. In this case, $\sigma'(y)>0$ in $(\ubar{y},\ubar{y}+\xi_1)=(-0.589, -0.232)$ and $\sigma'(y)<0$ in $(\bar{y}-\xi_2,\bar{y})=(0.174, 0.577)$.
\begin{figure}[tbhp]
	  \includegraphics[width=1\linewidth]{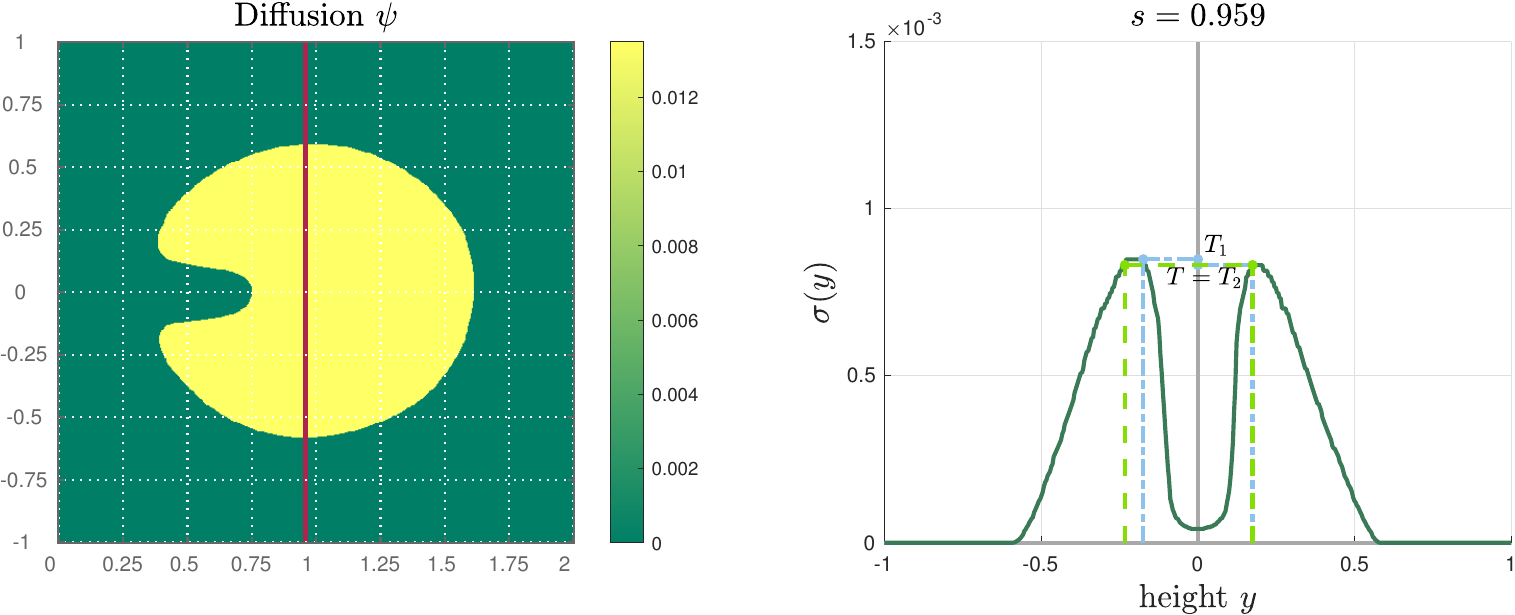}
  \caption{Curve $\Gamma$ for \texttt{Dataset 2} at $s=0.959$. On the right, the constant diffusion where the vertical line defines the observed value of $s$. On the left, $\sigma(y)$ defines the curve $\Gamma$ with a convexity around the origin. $T_1$ and $T_2$ are marked as dots and, increasing and decreasing zones are identified. }\label{fig:gamma_dataset2}
\end{figure}

For $\texttt{Dataset 3}$, the curve $\Gamma$ has a different behaviour due to the particular election of the diffusion term. As before, we have identified the illumination intervals based on the values of $T_1$ and $T_2$ as is presented in~\cref{fig:gamma_dataset3}. In this case, $T=T_2$ and $\ubar{y}+\xi_1=-0.362$ and $\bar{y}-\xi_2=0.311$. As the support of $\sigma$ is $[\underline y, \overline y]=[-0.601, 0.553]$, the illumination set in this case is defined over $(-0.601, -0.362)\cup (0.311, 0.553)$.  
\begin{figure}[tbhp]
	  \includegraphics[width=1\linewidth]{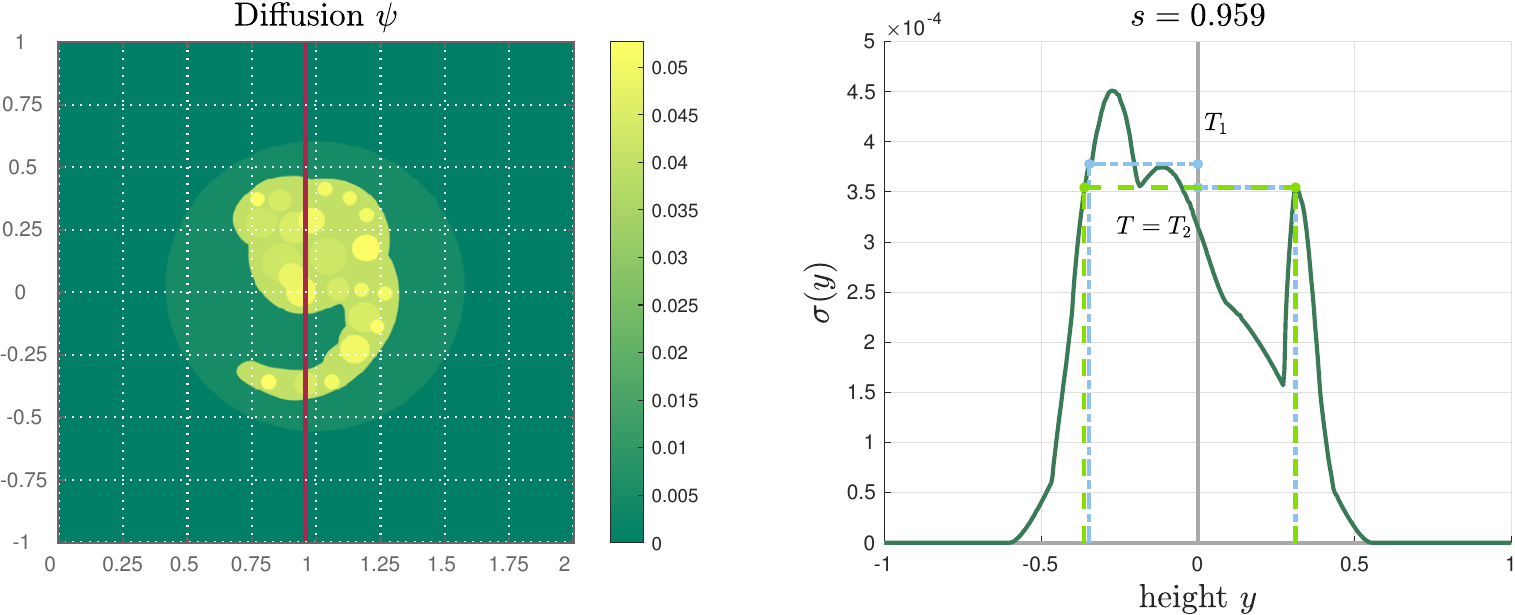}
  \caption{Curve $\Gamma$ for \texttt{Dataset 3} at $s=0.959$. On the right, the diffusion caused by the presence of the specimen, the fluorescent source and the circular medium where the zebrafish is merged. The vertical line defines the observed value of $s$. On the left, $\sigma(y)$ defines the curve $\Gamma$ and, the values of $T_1$ and $T_2$ are marked as dots. }\label{fig:gamma_dataset3}
\end{figure}


Once we have determined the observations regions, we use this (limited-) information below to reconstruct $\mu(s,\cdot)$ and compare it with the reconstruction obtained when a full set of observations is used. This last experiment aims to show the assertion made in~\cref{rmk:C_vs_T_1}. Let us first observe the conditioning of a matrix $\bm A_s$ when full illumination are considered compared to the limited set of illuminations defined by $\sigma$-properties. For this experiment,  we have considered \texttt{Dataset 2} and \texttt{Dataset 3} where full and limited illuminations differ. In~\cref{fig:sigma_s_condition_As}, we present function $\sigma$ for different values of $s$, as in~\cref{fig:gamma_dataset2,fig:gamma_dataset3}, we determine the observation intervals that are also detailed in~\cref{tab:table_intervals}. Once, we select the illumination set, we can choose the corresponding rows of the matrix $\bm A$ to build $\bm A_s$ in each case. The condition number of $\bm A_s$ is plotted in the right hand side of~\cref{fig:sigma_s_condition_As} for \texttt{Dataset 2} in the top row and, for \texttt{Dataset 3} in the bottom row. The main difference between the dotted and continued lines is what we expected by~\cref{th:LSFM stability}, the stability of reconstructing $\mu(s,\cdot)$, observed through the condition number of $\bm A_s$, is worse when we have less observations, \emph{i.e.} when the value of $T$ is smaller as in~\cref{rmk:C_vs_T_1}. For \texttt{Dataset 3} in the full-observation case, the condition number does not have strict growth as we increase the variable $s$, this is due to the variability of the diffusion map. 
\begin{figure}[tbhp]
\centering
	  \includegraphics[width=0.45\linewidth]{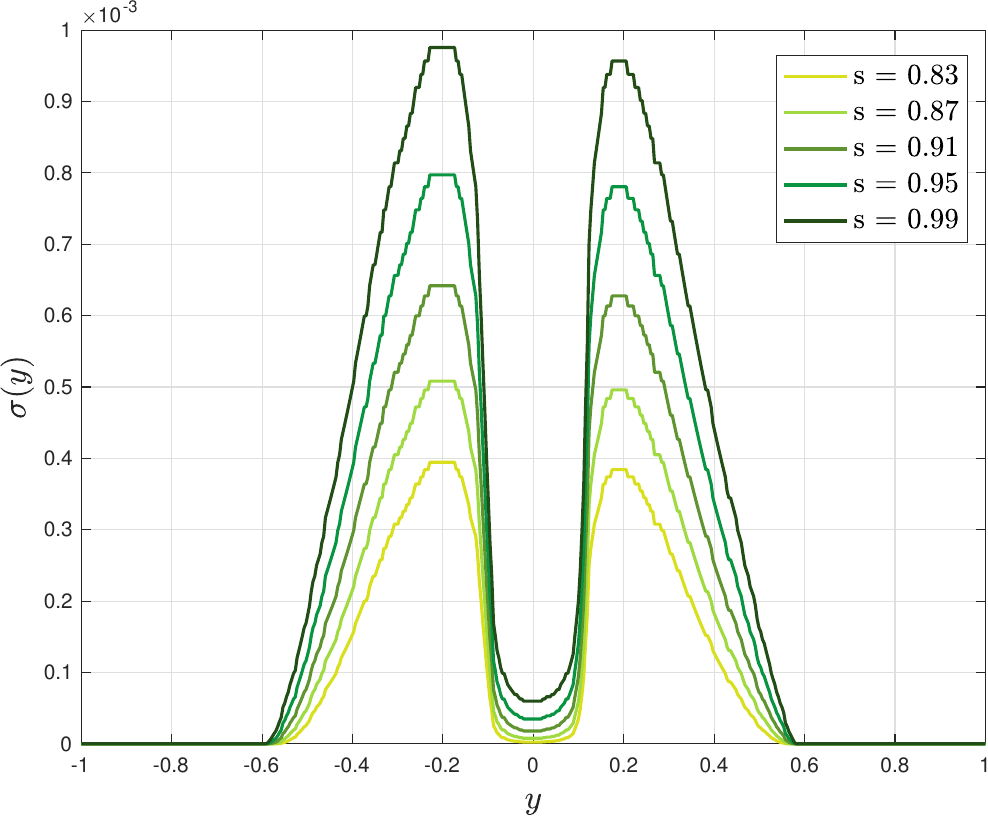}
	  \includegraphics[width=0.45\linewidth]{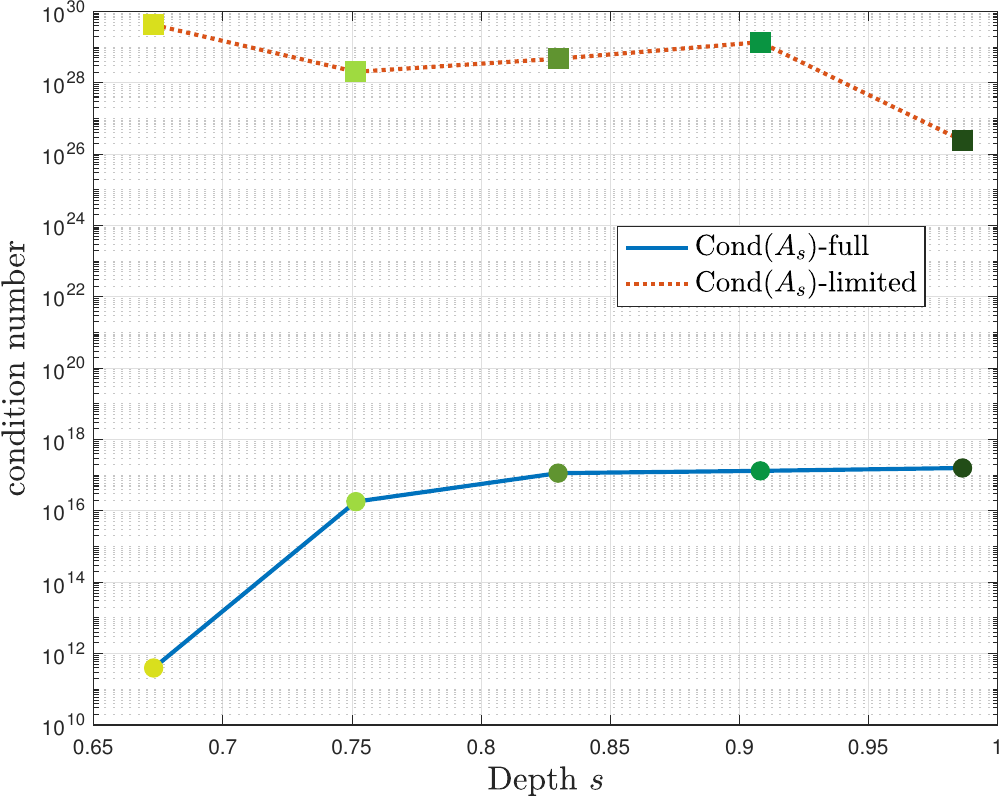}\\[15pt]
	  \includegraphics[width=0.45\linewidth]{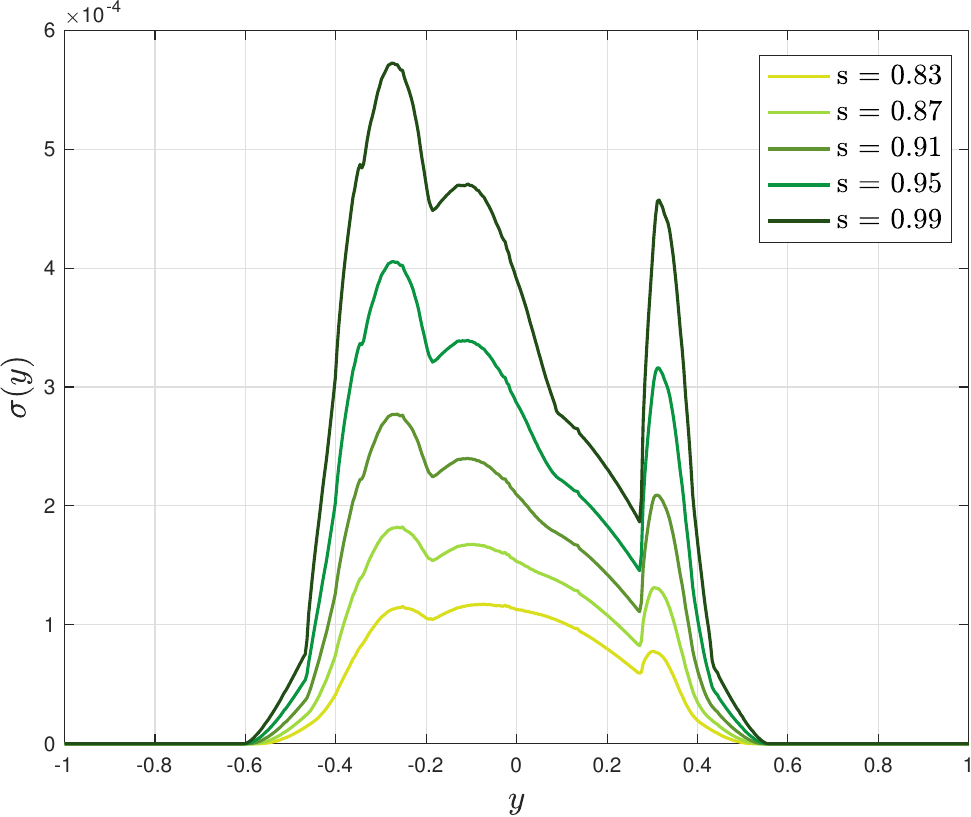}
	  \includegraphics[width=0.45\linewidth]{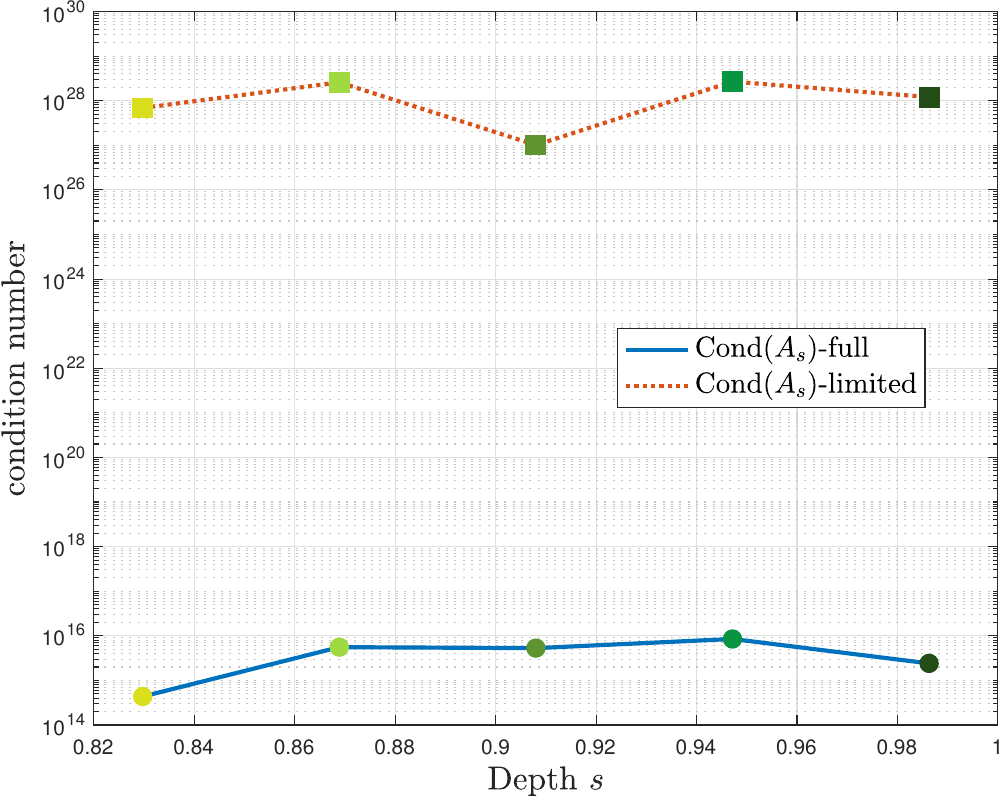}
  \caption{Condition number of $\bm A_s$ matrix. Top row: results for \texttt{Dataset 2} when different values of $s$ are considered. On the left hand, the $\sigma$ function is plotted to identify the observation intervals determined by the top and bottom boundaries. On the right hand, $\text{cond}(A_s)$ for full and limited illuminations are plotted. The bottom row considers the same results for \texttt{Dataset 3}. The limited case depends on the intervals defined in ~\cref{tab:table_intervals}.}\label{fig:sigma_s_condition_As}
\end{figure}

\begin{table}[tbhp]
{\footnotesize
  \caption{Observation intervals for \texttt{Dataset 2} and \texttt{Dataset 3}. 
  For different values of $s$ between 0.67 and 0.99, we analyze the behaviour of $\sigma$ given in~\cref{fig:sigma_s_condition_As} 
  to determine the intervals $(\ubar{y},\ubar{y}+\xi_1)\cup(\bar{y}-\xi_2,\bar{y})$ as in~\cref{fig:gamma_dataset2,fig:gamma_dataset3}.
  %
  }\label{tab:table_intervals}
\begin{center}
\begin{tabular}{|c|c|c|} \hline
\bm{$s$}      & \bf \texttt{Dataset 2} & \texttt{Dataset 3} \\ \hline
0.67 & $(-0.566, -0.233)\cup(0.174, 0.562)$ & $(-0.577, -0.354)\cup (0.295, 0.534)$ \\[2pt]
0.75 & $(-0.577, -0.233)\cup(0.174, 0.569)$ & $(-0.597, -0.358)\cup(0.303, 0.55)$ \\[2pt]
0.83 & $(-0.585, -0.233)\cup(0.174, 0.573)$ & $(-0.597, -0.358)\cup(0.307, 0.55)$\\[2pt]
0.91 & $(-0.589, -0.233)\cup(0.174, 0.577)$ & $(-0.601, -0.362)\cup(0.311, 0.554)$\\[2pt]
0.99 & $(-0.589, -0.233)\cup(0.174, 0.577)$ & $(-0.605, -0.366)\cup(0.311, 0.558)$\\ \hline
\end{tabular}
\end{center}
}
\end{table}
Finally, we illustrate the measurement process using \texttt{Dataset 3}, we explain how to get the set of measurements after illuminating and counting photons in one camera. We select the data associated to a particular $s$ to reconstruct $\mu(s,\cdot)$ when limited and full illuminations are considered.

\subsection{Reconstructions based on parameter $T$}\label{ssec:recontruction}
In this part, we aim to reconstruct $\mu(s,\cdot)$ for a fixed value of $s$ using \texttt{Dataset 3}. We will see that this reconstruction is stable in terms of~\cref{th:LSFM stability}. The resulting set of measurements after illuminating along all possible heights is represented in~\cref{fig:dataset3_measurements}. We can observe a blurred image which represents the effects of the diffusion (scattering) during the excitation stage. These measurements were also perturbed by Poisson noise to avoid inverse crime during the reconstruction.

\begin{figure}[tbhp]
\settoheight{\tempdima}{\includegraphics[width=.3\linewidth]{Figures/experiments/dataset3_measurements}}%
\centering
\includegraphics[width=0.35\linewidth]{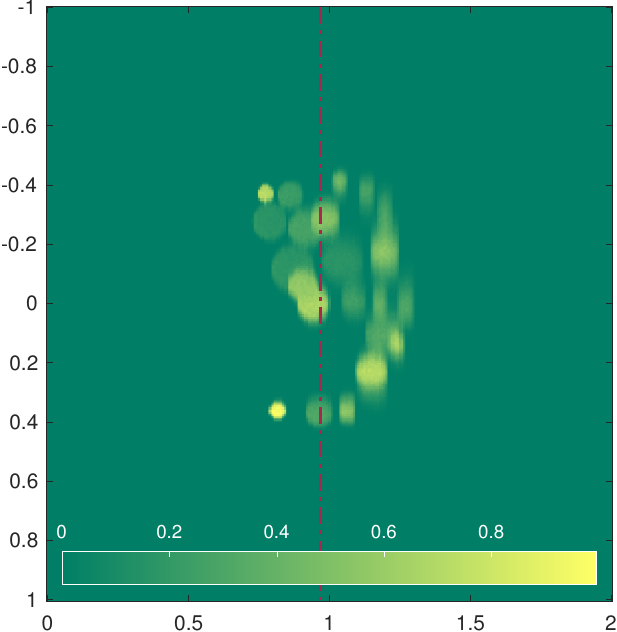}
\caption{Noise measurements for \texttt{Dataset 3}. For each illumination height $y$ in $y$-axis, the corresponding row in the image represents the number of photons (scaled as intensity) that is observed by the camera after the excitation beam is emitted at the point $(0, y)$. The vertical line marks the depth $s$ considered to reconstruct $\mu(s,\cdot)$. }\label{fig:dataset3_measurements}
\end{figure}

In~\cref{fig:dataset3_reconstructions}, we present the reconstruction of $\mu(s,y)$ for $s=0.969$, the left hand side of this figure presents the $\sigma$ function needed to determine the object top and bottom boundaries. As was analyzed before, the illuminations used in~\cref{th:LSFM stability} that determine measurements are taken in the set $I=(-0.602, -0.375)\cup(0.344, 0.555)$. On the right side of~\cref{fig:dataset3_reconstructions}, we show the source $\mu(s,y)$ as ground truth, the reconstruction using only illuminations over $I$ (limited illuminations) and, the reconstruction for illuminations over $(-0.602, 0.555)$ (full illuminations). These reconstructions are associated to the solution of a linear system of the form $\bm A_s \bm\mu_s= \bm b_s$ that were solved using \emph{simultaneous algebraic reconstruction technique method} (sart) provided by the MATLAB package IR Tools~\cite{gazzola2018}. We observe that a stable reconstruction is possible in both cases but the lack of information in the limited-illumination case produces a gross reconstruction in the non-observable region.
\begin{figure}[tbhp]
\centering
\includegraphics[width=\linewidth]{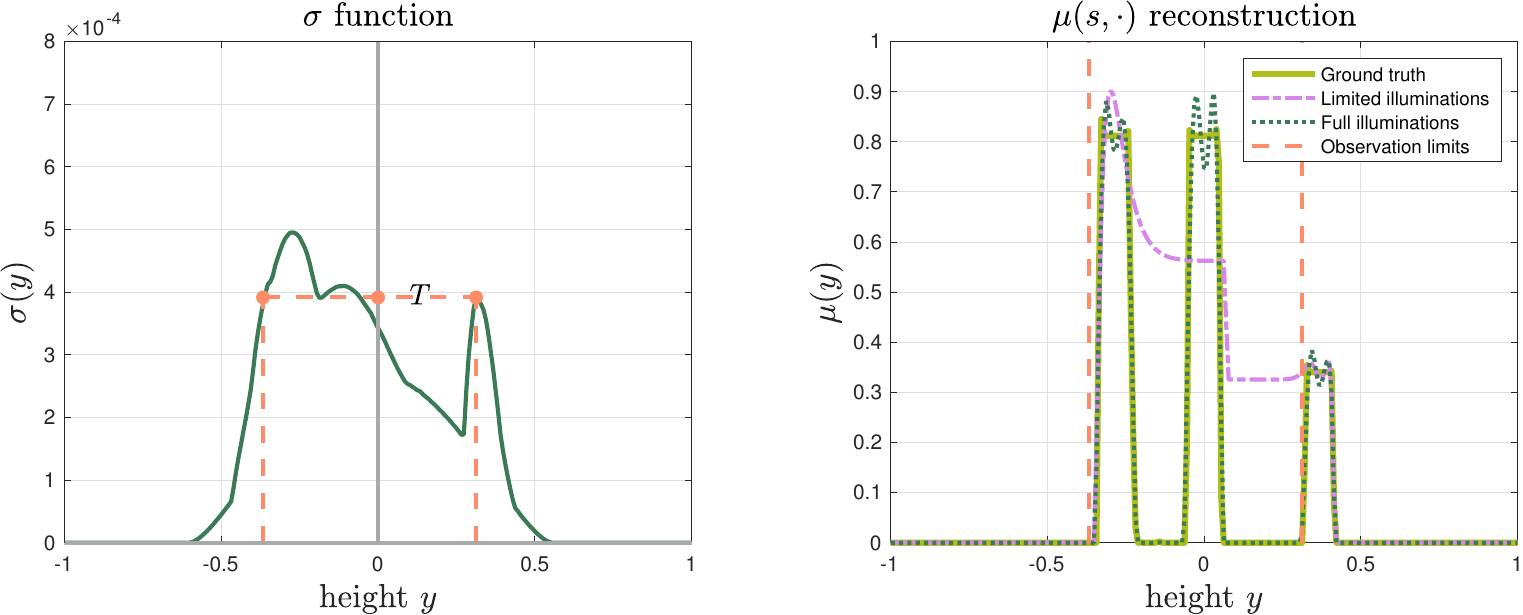}
\caption{Reconstruction of $\mu(s,\cdot)$ for $s=0.969$ using \texttt{Dataset 3}. On the left, $\sigma$ function for the selected $s$ with the corresponding observation interval whose limits are represented by a dashed line. On the right, $\mu(s,\cdot)$ and its limited and full reconstructions are added as profiles for ease of comparison.}\label{fig:dataset3_reconstructions}
\end{figure}


\section{Conclusions}
\label{sec:conclusions}

Two novel results has been established with respect to the stability for the reconstruction of the initial temperature for the heat equation in $\R^n$, for distinct domains of observation of the form $\omega \times (\tau,T)$. Typical results in the literature for the backward heat equation problem provide us with logarithmic estimates when incorporating some a priori information on the initial condition. In our case, we have been able to improve those estimates to a Lipschitz one, at least for compactly supported initial conditions. We expect that this result may be extended for more general initial temperatures. Furthermore, another interesting stability result is obtained for the reconstruction of the initial temperature for the heat equation in $\R$ when measurements are available on a curve $\Gamma \subset \R\times[0,\infty)$, a problem that arises from the LSFM model established in \cite{cueva2020mathematical}. However, we have to be careful with these results, more specifically, we highlight the stability constant. Recall that this constant comes, in part, from the open mapping theorem, which ensure just the existence of this term, without giving any information about the dependency on the parameters of the problem. Consequently, if this constant is too large in comparison to the noise level in the measurements, then we can not expect a good reconstruction from the numerical point of view, despite the Lipschitz estimate. In fact, as numerical results indicate, even though a small noise is added to the measurements, the initial condition reconstructed is away from the real one in those sections where measurements are not taken into account, which give insights of a high value for the stability constant. We expect that the result may be improved by considering all the measurements available and not restricting to those heights of illumination for which $\sigma$ is increasing or decreasing.

\appendix
\section{Erratum}\label{erratum}
\subsection{Introduction}

Motivated by applications to an inverse problem arising in fluorescence microscopy, in the paper \cite{arratia2021} the authors pursue stability inequalities for the backward heat equation problem with outer measurements. It was observed that such outer-measurement estimates lead to stability inequalities for the heat equation with measurements along a suitable space-time curve. Based on the approach introduced by the authors in \cite{cueva2020mathematical} ---where the uniqueness question for the inverse problem in fluorescence microscopy is addressed--- it is clear that the later estimate allows to establish the sought stability for the inverse fluorescence microscopy problems.

The first result presented in \cite{arratia2021} is a conditional logarithmic stability inequality for the case of outer measurements, and it is proved using Carleman estimates. An importante feature of this approach is the fact that it provides rather explicit constants in the estimates.

On the other hand, inspired by previous examples of Lipschitz stability inequalities for the heat equation such as \cite{Saitoh_Yamamoto_1997}, we attempted to use a different methodology, namely, a compactness-uniqueness argument, with the purpose of obtaining a conditional Lipschitz stability in our specific outer-measurement setting. 

We are submitting this corrigendum because the mentioned argument is incorrect, invalidating theorem 1.3 and those that follow from it (theorems 1.2, 1.4 and 5.1). The error resides on the nonlinear nature of the operators that take positive or negative parts of a given function.

Despite the previous issue, one can still propose considerably weaker Lipschitz stability estimate, but with a more explicit constant, which is a feature shared with the method employed in the deduction of the conditional logarithmic inequality, although the later estimate is established for more natural Sobolev norms.

The corrigendum is organized as follows. In section 2 we state corrected versions of the theorems that had to be revised due to the error. In addition, and for the sake of completeness, we included in section 3 a counterexample to  theorem 1.3 in \cite{arratia2021}, while section 4 contains brief proofs of the revised versions of the main theorems. We finalize this corrigendum with some comments on the numerical experiments.

\subsection{Revised theorems}

We have decided to keep the enumeration of theorems as they appear in \cite{arratia2021}. The main modifications are as follow. Considering the initial value problem
\begin{equation}\label{eq:Heat Equation2}
\left\{ \begin{array}{rcll} 
u_t-\Delta u & = & 0 & \text{in } \mathbb{R}^n\times(0,T),\\
u(y,0) & = & u_0(y) & \text{in } \mathbb{R}^n,\\
\displaystyle\lim_{|y|\to\infty}u(y,t) & = & 0 & t\in (0,T),
\end{array}\right.
\end{equation}
theorem 1.3 and 1.2 in \cite{arratia2021} are replaced, respectively, by the next weaker and straightforward conditional Lipschitz stability results. Below, $H^{-2}(B)$ stands for the topological dual of $H^{2}_0(B)$, with this space corresponding to the completion of $C^\infty_c(\Omega)$-functions with respect to the $H^2$-norm. We consider instead the norm $\|\varphi\|_{H^{2}_0(B)}=\|\Delta\varphi\|_{L^2(B)}$ thanks to Poincare's inequality.

\begin{customthm}{1.3}\label{th:Lipschitz stability2}
Let $B\subset\RR^n$ be a bounded open subset. If $u_0\in L^2(\RR^n)$ with $\supp(u_0)\subseteq B$ and $u$ is the corresponding solution to \eqref{eq:Heat Equation2}, then 
\[\|u_0\|_{H^{-2}(B)}\leq C\|u\|_{L^2(B\times(0,T))},\]
with $C=(C_PT^{-1/2}+T^{1/2})$ and $C_P>0$ a Poincare's inequality constant such that $\|\varphi\|_{L^2(B)}\leq C_P\|\Delta\varphi\|_{L^2(B)}$ for all $\varphi\in C^\infty_c(B)$.

\end{customthm}

\begin{customthm}{1.2}\label{th:Lipschitz stability unbounded domain2}
For $R>0$ we set $B:=B(0,R)$, the ball of radius $R$ centered at the origin. Let $T>0$ and $\omega\subseteq\RR^n$ be such that $\RR^n\backslash\omega$ is compact and $B\subseteq \RR^n\backslash \omega$. Let $u_0\in L^2(\RR^n)$ satisfy with $\supp(u_0)\subseteq B$ and let $u$ be the respective solution of \eqref{eq:Heat Equation2}. Then there exists a constant $C=C(R,T,\omega)>0$ such that
\[\|u_0\|_{H^{-2}(B)}\leq C\|u\|_{L^2(\omega\times(0,T))}.\]
\end{customthm}

On the other hand, Theorem 1.4 in the original paper needs to be revised as well and now takes the form of a weaker conditional Lipschitz stability and a conditional logarithmic stability estimate. We follows the definitions and notations introduced in the original paper. In order to write the results in a more concise fashion, we introduce the norm $\|\cdot\|_{m}$ and the modulus of continuity $m:\RR_+\to\RR_+$ which are defined as
\[
\|\cdot\|_{m}=\|\cdot\|_{H^{-2}(B)} \quad\text{if}\quad m(x)=x\quad\text{and}\quad \|\cdot\|_{m}=\|\cdot\|_{L^2(B)} \quad\text{if}\quad m(x)=(-\log(x))^{\kappa}.
\]

\begin{customthm}{1.4}\label{th:Curve stability2}
Let $\sigma:\RR\to\RR_+$ be a function satisfying the  $\sigma$-properties. Let $u$ be the solution of \eqref{eq:Heat Equation} in space-dimension $n=1$, for some $u_0\in L^2(\RR)$ (resp. $u_0\in \mathcal{A}_{\beta,M}=\{a \in H^{2\beta}(\RR): \|a\|_{H^{2\beta}(\RR)}\leq M\}$) such that $\supp(u_0)\subset B=(a_1+\delta, a_2-\delta)$, where $0<\delta<(a_2-a_1)/2$. Let $\Gamma_L:=\{(y,\sigma(y)):y\in(-\infty,a_1+\xi_1]\}$ and $\Gamma_R:=\{(y,\sigma(y)):y\in[a_2-\xi_2,\infty)\}$ be two curves contained in $\RR\times[0,\infty)$ where measurements are available. Then there exist $\kappa\in(0,1)$ and a constant $C = C(\sigma,\delta)>0$ such that
\[\|u_0\|_{m}\leq C m(\|u\|_{L^1(\Gamma_L\cup\Gamma_R)}).\]
\end{customthm}
For the definition of the $\sigma$-properties, we refer the reader to the paper \cite{arratia2021}.

Finally, in relation to the inverse problem in Light Sheet Fluorescence Microscopy, we revise the conclusions of Theorem 5.1 in \cite{arratia2021} and present the following corrected version. Once again, we follow the definitions and notations presented in the original article.

\begin{customthm}{5.1}\label{th:LSFM stability2}
Let $\mu\in \mathcal{B}$ (resp. $\mu\in \mathcal{B}\cap\mathcal{A}_{\beta,M}$), with
\[\mathcal{B}:=\{\mu\in L^{2}(\RR^2): \mu(s,\cdot)\in L^2(\RR), \forall s\in(s^-,s^+), \supp(\mu) \subset\widetilde{\Omega},\}\]
and let $s\in(s^-,s^+)$. Then, there exist $\kappa\in(0,1)$ and a constant $C=C(\sigma,s)>0$ such that

\[\left|\left|\mu(s,\cdot)e^{-\int_{\cdot}^{\infty}a(s,\tau)d\tau}\right|\right|_{m}\leq C m\left(\left|\left|\frac{1}{c}p(\cdot)e^{\int_{\gamma(\cdot)}^s\lambda(\tau,\cdot)d\tau}\right|\right|_{L^1((\ubar{y},\ubar{y}+\xi_1)\cup(\bar{y}-\xi_2,\bar{y}))}\right),\]
and therefore
\[\|\mu(s,\cdot)\|_{m}\leq C'm\big(\|p\|_{L^1((\ubar{y},\ubar{y}+\xi_1)\cup(\bar{y}-\xi_2,\bar{y}))}\big),\]
where
\[C'=\dfrac{C}{c}\exp(\|a(s,\cdot)\|_{L^1(\RR)}+\|\lambda\|_{L^\infty(\RR^2)}(s-s^-)).\]
\end{customthm}

\subsection{Counterexample for Theorem 1.3}


The next example illustrates some of the obstacles that prevent the validity of a Lipschitz (unconditional) inequality for the backward heat propagation problem when observations take place in an exterior region.

\begin{Example}
For $k\in\mathbb{N}$ let $u_k(y,t)$ be the solution of $(P_k)$ the heat equation in $\mathbb{R}\times(0,\infty)$ described as
\begin{align*}
(P_k)\qquad \left\{ \begin{array}{rcll} 
u_t-\Delta u & = & 0 & \text{in } \mathbb{R}\times(0,\infty),\\
u(y,0) & = & g_k(y) & \text{for } y\in\mathbb{R},\\
\displaystyle\lim_{|y|\to\infty}u(y,t) & = & 0 & t\in (0,\infty).   
\end{array}\right.
\end{align*}
for $g_k(y)$ defined as
\begin{align*}
\begin{cases}
g_k(y)=\sin(ky), & \textnormal{ for } |y|\leq \pi,\\
g_k(y)=0, & \textnormal{ for } |y| > \pi.
\end{cases}.
\end{align*}
Then, $\|g_k\|_{L^1(\RR)}=4, \forall k\in \mathbb{N}$, while $\|u(\cdot,t)\|^2_{L^2(\RR)}\leq \tilde{C}\left(e^{-k^2t}+\frac{1}{k}\right)$, for all $t>0$, and $\tilde{C}$ being an absolute constant independent of $k$ and $t$.
\end{Example}

Let us verify these claims. On one hand, we can immediately verify that
\begin{align*}
\|g_k\|_{L^1(\mathbb{R})}=\int_{-\pi}^\pi |\sin(ky)|dy = \int_{-k\pi}^{k\pi} |\sin(x)|\frac{1}{k} dx = 2\int_0^\pi \sin(x) dx =4.
\end{align*}

On the other hand, the solution $u_k(y,t)$ can be written as
\begin{align*}
u_k(y,t) &= \frac{1}{\sqrt{4\pi t}}\int_\RR g_k (x) \exp\left({-\frac{(y-x)^2}{4t}}\right)dx \\
&= \frac{1}{\sqrt{4\pi t}}\int_{\pi}^\pi \sin(kx) \exp\left({-\frac{(y-x)^2}{4t}}\right)dx
\end{align*}
and we will analyze the solution $u_k(y,t)$, for $t>0$ fixed, for $y$ in different intervals of $\RR$, as indicated in Figure \ref{fig:regions}
\begin{figure}[h!]
\begin{center}\includegraphics[scale=.4]{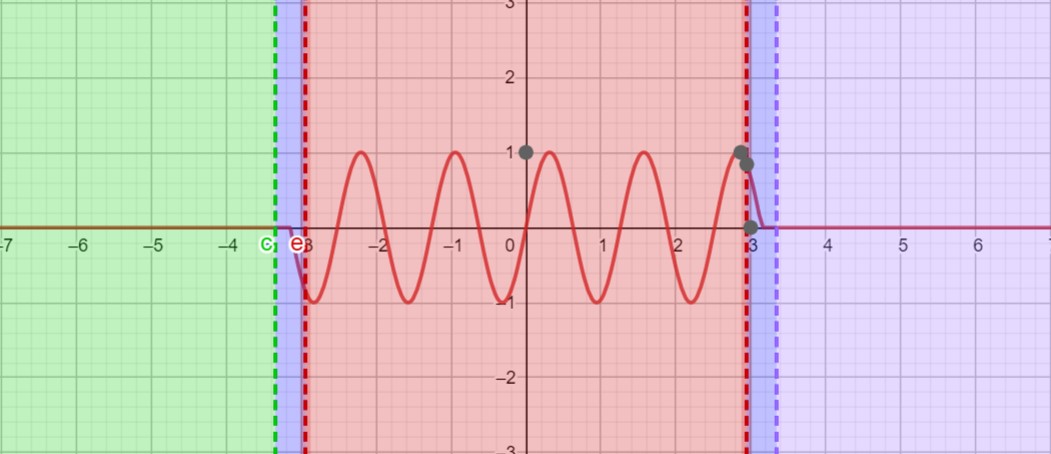}\end{center}
\caption{Intervals where we analyze $u_k(y,t)$.}\label{fig:regions}
\end{figure}

{\bf Case 1: $y<-\pi-\frac{1}{k}$.}

In this case, since $x\mapsto e^{-\frac{(y-x)^2}{4t}}$ is decreasing for $x\in[-\pi,\pi]$, the periodicity of $|\sin(kx)|$ implies that
for $i=-k+1,...,k-1$ we have
\begin{align*}
\int_{\pi \frac{i-1}{k}}^{\pi\frac{i}{k}} |\sin(kx)| \exp\left({-\frac{(y-x)^2}{4t}}\right)dx
\leq \int_{\pi \frac{i}{k}}^{\pi\frac{i+1}{k}} |\sin(kx)| \exp\left({-\frac{(y-x)^2}{4t}}\right)dx
\end{align*}

\begin{figure}[h!]
\begin{center}\includegraphics[scale=.4]{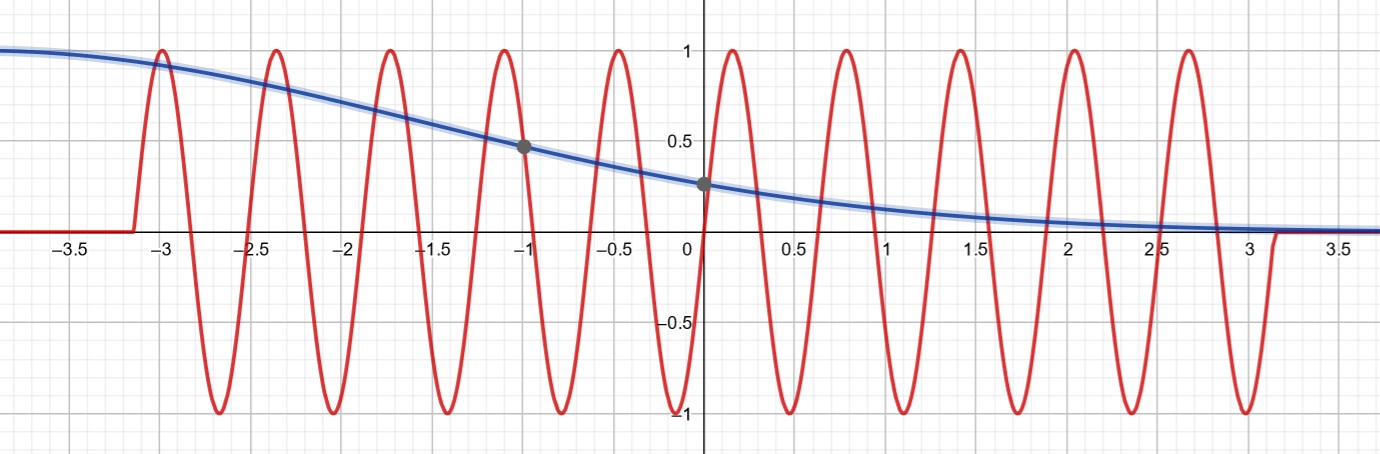}\end{center}
\caption{Case $y<-\pi-\frac{1}{k}$.}\label{fig:y_left_out}
\end{figure}

(see Figure \ref{fig:y_left_out}) and considering the alternating signs of $\sin(kx)$ we get
\begin{align*}
0\leq u_k(y,t) & \leq \frac{1}{\sqrt{4\pi t}}\int_{-\pi}^{-\pi\left(1+\frac{1}{m}\right)} \sin(kx) \exp\left({-\frac{(y-x)^2}{4t}}\right)dx\\ 
&\leq \frac{1}{k\sqrt{4\pi t}}\exp\left({-\frac{(y+\pi)^2}{4t}}\right).
\end{align*}

{\bf Case 2: $y >\pi+\frac{1}{k}$}

Analogously to case 1, we get that 
\begin{align*}
0\geq u_k(y,t) &\geq \frac{1}{\sqrt{4\pi t}}\int_{-\pi}^{-\pi\left(1+\frac{1}{k}\right)} \sin(kx) \exp\left({-\frac{(y-x)^2}{4t}}\right)dx\\ &\geq - \frac{1}{k\sqrt{4\pi t}}\exp\left({-\frac{(y-\pi)^2}{4t}}\right).
\end{align*}

{\bf Case 3: $y\in [-\pi-\frac{1}{k},-\pi+\frac{1}{k}]\cup[\pi-\frac{1}{k},\pi+\frac{1}{k}]$}.

Since $\|u_k(\cdot,t)\|_\infty\leq \|g_k\|_\infty$ for all $t>0$, then $|u_k(y,t)|\leq 1$ uniformly in these intervals.

{\bf Case 4: $y\in [-\pi+\frac{1}{k},\pi-\frac{1}{k}]$}.

For $y$ in this interval we observe that, similarly to case 1 and 2, and since the following integrals are absolutely convergent, we have that
\begin{align*}
0 \geq \frac{1}{\sqrt{4\pi t}}\int_{-\infty}^{-\pi} \sin(kx)\exp\left(-\frac{(y-x)^2}{4t}\right)dx \geq - \frac{1}{k\sqrt{4\pi t}}\exp\left(-\frac{(y+\pi)^2}{4t}\right),\\[.2em]
0 \leq \frac{1}{\sqrt{4\pi t}}\int_{\pi}^{\infty} \sin(kx)\exp\left(-\frac{(y-x)^2}{4t}\right)dx \leq \frac{1}{k\sqrt{4\pi t}}\exp\left(-\frac{(y-\pi)^2}{4t}\right),
\end{align*}
and at the same time, due to standard Fourier transform results, we have
\begin{align*}
\int_{-\infty}^\infty \sin(kx)\frac{1}{\sqrt{4\pi t}}\exp\left(-\frac{(x-y)^2}{4t}\right) dx = \sin(ky)\exp(-k^2t).
\end{align*}
In summary, for $y\in [-\pi+\frac{1}{k},\pi-\frac{1}{k}]$ we conclude that
\begin{align*}
\sin(ky)\exp(-k^2t) - \frac{1}{k\sqrt{4\pi t}}\exp\left(-\frac{(y+\pi)^2}{4t}\right) \leq u_k(y,t),
\end{align*}
and
\begin{align*}
u_k(y,t) \leq \sin(ky)\exp(-k^2t) + \frac{1}{k\sqrt{4\pi t}}\exp\left(-\frac{(y-\pi)^2}{4t}\right).
\end{align*}

We proceed to compute the $L^2(\RR)$ norm of $u_k(\cdot,t)$ as
\begin{align*}
\|u_k(\cdot,t)\|^2_{L^2(\RR)}=& \int_{-\infty}^{-\pi-1/k} |u_k(y,t)|^2 dy +
\int_{-\pi-1/k}^{-\pi+1/k} |u_k(y,t)|^2 dy +\\
&\int_{-\pi+1/k}^{\pi-1/k} |u_k(y,t)|^2 dy +
\int_{\pi-1/k}^{\pi+1/k} |u_k(y,t)|^2 dy + \\
& \int_{\pi+1/k}^\infty |u_k(y,t)|^2 dy.
\end{align*}
We use that
\begin{align*}
\int_0^\infty \left[\frac{1}{k\sqrt{4\pi t}} \exp\left(-\frac{(y+\frac{1}{k})^2}{4t}\right)\right]^2 dy\leq \frac{1}{k^2}\frac{\exp\left(-\frac{1}{k^2t}\right)}{2\sqrt{4\pi t}}\leq  \frac{1}{k}\frac{e^{-1/2}}{4\sqrt{2\pi}}=:\frac{C}{k}
\end{align*}
to obtain the estimate
\begin{align*}
\|u_k(\cdot,t)\|^2_{L^2(\RR)}\leq & \frac{C}{k} + \frac{2}{k}+
\int_{-\pi-1/k}^{-\pi+1/k} |u_k(y,t)|^2 dy +
\frac{2}{k} + \frac{C}{k} \\
\leq &  \frac{C}{k} + \frac{2}{k}+\left(\pi e^{-k^2t}+\sqrt{\frac{C}{k}}\right)^2
+\frac{2}{k} + \frac{C}{k} \\
\leq & \tilde{C}\left(e^{-k^2t}+\frac{1}{k}\right).
\end{align*}

\subsection{Proof of main results}

\subsubsection{Proof of Theorem \ref{th:Lipschitz stability2}}
Based on semi-group theory, it follows that for all $u_0\in L^2(\RR^n)$ and $u$ the solution to \eqref{eq:Heat Equation},
\begin{equation}\label{SG_heat2}
u_0(x) = u(x,t) -\Delta \int^{t}_{0} u(x,s)ds,
\end{equation}
for all $t>0$ and a.e. $x\in\RR^n$ (see, for instance, \cite{Pazy}). 

For any test function $\varphi\in C^\infty_c(B)$ and $t>0$,
\[
\begin{aligned}
\int_B u_0(x)\varphi(x)dx &= \int_B u(x,t)\varphi(x)dx - \int_B \left(\int^t_0u(x,s)ds\right) \Delta\varphi(x)dx\\
&=\int_B u(x,t)\varphi(x)dx -  \int^t_0\int_Bu(x,s)\Delta\varphi(x)dxds,
\end{aligned}
\] 
therefore, by Cauchy-Schwarz inequality
\[
\left|\int_B u_0(x)\varphi(x)dx \right|\leq \|\varphi\|_{L^2(B)} \|u(\cdot,t)\|_{L^2(B)}+\|\varphi\|_{H^2_0(B)}\int^t_0\|u(\cdot,s)\|_{L^2(B)}ds. 
\]
Integrating in time it directly follows the Lipschitz inequality
\[
\|u_0\|_{H^{-2}(B)}\leq T^{-1}(C_P+T)\|u\|_{L^1((0,T);L^2(B))},
\]
with $C_P$ the Poincare's inequality constant. We finally use Cauchy-Schwarz inequality one more time to conclude the proof.

\subsubsection{Proof of Theorem \ref{th:Lipschitz stability unbounded domain2}}

The sought inequality follows from the estimate
\begin{equation}\label{ineq_B_omega}
\|u\|_{L^2(B\times(0,T))}\leq C\|u\|_{L^2(\omega\times(0,T))},
\end{equation}
which is obtained from computations performed in the proof of Theorem 2.1, in the original version \cite{arratia2021}. Namely, the estimates (2.5) and (2.7) in the paper (with $\tau=0$) give
\[
\int_0^T\int_{\Theta}\frac{\hat{s}}{t(T-t)}|\theta|^2e^{2\hat{s}\hat{\zeta}}dxdt\leq C\|u\|^2_{L^2(\omega\times(0,T))}
\]
where we have adopted the same notation as in \cite{arratia2021}. We recall that $\theta=\rho u$ with $\rho$ a smooth cut-off supported in $\Theta$, a neighborhood of the compact region $\RR^n\backslash\omega$.

Since the weight $\frac{\hat{s}}{t(T-t)}$ is bounded from below by a positive constant independent of $T$, and $\rho = 1$ in $B\subset \RR^n\backslash\omega$, then simple estimations give \eqref{ineq_B_omega}.

\subsubsection{Proof of Theorem \ref{th:Curve stability}}
This proof is almost identical to the one that appears in \cite{arratia2021}. It is a direct consequence of Theorem 4.1 in the article, coupled with either Theorem 1.1 from \cite{arratia2021} or Theorem \ref{th:Lipschitz stability unbounded domain} from this corrigendum.

\subsubsection{Proof of Theorem \ref{th:LSFM stability}}
The proof remains the same, although we use Theorem 3 from this corrigendum which supersedes Theorem 1.4 from \cite{arratia2021}.

\subsection{Comments on the numerical experiments}

In the original paper \cite{arratia2021}, the section on numerical experiments studies the stability of the discretized inverse problem of LSFM by looking at the condition number of the resulting finite-dimensional linear problem.

These experiments provide important information by analyzing the dependence of the inversion stability on different parameters of the problem, such as the support of the target function, or if the set of measurements is complete or truncated, showing how reconstruction is considerably improved in regions illuminated directly by the laser (instead of just being illuminated by the tail of the gaussian profile).

In view of this corrigendum and the fact that there is no (strong) Lipschitz stability for the continuous LSFM inverse problem, it is incorrect to try to relate the condition number of the discretized problem to a theoretical Lipschitz stability constant, as it is done in the original paper. Nevertheless, we believe the numerical experiments are still an important complement to the analysis of the LSFM inverse problem, in particular, they provide relevant insight about the stability of this microscopy technique. An example of this corresponds to the large values of the condition number and how quickly the problem becomes more unstable under incomplete measurements, which it is in agreement with the fact that the theoretical analysis only guarantees a logarithmic stability.

%

\section*{Acknowledgments}

P.A. was partially funded by Basal Program CMM-AFB 170001 and Anid-Fondecyt grant \#1191903. P.A. also thanks the Department of Mathematical Engineering at Universidad de Chile.\newline
A.O. was partially funded by ANID-Fondecyt grants \#1191903 and \#1201311, Basal Program CMM-AFB 170001, FONDAP/15110009 and Millennium Nucleus for Cardiovascular Magnetic Resonance.\newline
B.P. would like to thanks the Department of Mathematics at Pontificia Universidad Cat\'olica de Chile, for their hospitality during a visit in December 2018-January 2019.\newline 
M.C. was partially funded by Anid-Fondecyt grant \#1191903.

\bibliographystyle{siamplain}
\bibliography{references}
\end{document}